%----------------------------------------------------------------------------%
%  Some basic definitions
%----------------------------------------------------------------------------%
%\check{}
%the following isn't good
%\documentstyle[12pt,amsfonts]{article}
%however, the following is..:
\documentclass[12pt]{amsart}
\usepackage{amsmath}
\usepackage{amsxtra}
\usepackage{amscd}
\usepackage{amsthm}
\usepackage{amsfonts}
\usepackage{amssymb}
\usepackage{mathrsfs}
\usepackage[all]{xy}
\usepackage[pdftex]{color,graphicx}
\usepackage{graphicx}
\usepackage{color}
\usepackage{mathrsfs}

\newtheorem{thm}{Theorem}[section]
\newtheorem{lemma}[thm]{Lemma}
\newtheorem{prop}[thm]{Proposition}
\newtheorem{corol}[thm]{Corollary}

\newtheorem*{dfn}{Definition}

\newtheorem*{thmA}{Theorem A}
\newtheorem*{thmB}{Theorem B}
\newtheorem*{thmC}{Theorem C}

\def\Z{\Bbb Z}
\def\F{\Bbb F}
\def\Q{\Bbb Q}
\def\C{\Bbb C}
\def\N{\Bbb N}

\def\L{\mathcal L}

\DeclareMathOperator\aut{Aut} \DeclareMathOperator\inn{Inn}
\DeclareMathOperator\id{id} \DeclareMathOperator\cen{Z}
 
\DeclareMathOperator\Aut{Aut}

\DeclareMathOperator\Mod{mod} \DeclareMathOperator\prob{Prob_\Sigma}
\DeclareMathOperator\sym{Sym} \DeclareMathOperator\proba{Prob}
\DeclareMathOperator\GL{GL} 
 \DeclareMathOperator\Cay{Cay}
\DeclareMathOperator\ag{G} \DeclareMathOperator\SL{SL}
\DeclareMathOperator\V{Var} \DeclareMathOperator\E{E}
\DeclareMathOperator\ord{ord} 
\DeclareMathOperator\G{G} \DeclareMathOperator\walks{W}
%\ewenvironment{proof}{{\bf Proof:\ }}{\ $\Box$}

\textwidth = 15cm \textheight = 600pt

\makeatletter

\newcommand{\Rmnum}[1]{\expandafter\@slowromancap\romannumeral #1@}
\makeatother

\begin{document}

\begin{abstract} A general sieve method for groups is formulated. It
enables one to ``measure" subsets of a finitely generated group. As
an application we show that if $\Gamma$ is a finitely generated non
virtually-solvable linear group of characteristic zero then the set
of proper powers in $\Gamma$ is exponentially small. This is a far
reaching strengthening of the main result of \cite{HKLS}.
\end{abstract}

\title{Sieve methods in group theory \Rmnum{1}: \\  Powers in Linear groups }
%\subjclass[2000]{Primary 54A25; Secondary 03E04.}

\keywords{Sieve; Property-$\tau$; Powers; Linear groups; Finite
groups of Lie type.}

\author{Alexander Lubotzky and Chen Meiri}
\address{Einstein Institute of Mathematics \\Hebrew University\\Jerusalem 90914, Israel}
\email{alexlub@math.huji.ac.il, chen.meiri@mail.huji.ac.il}
\maketitle
\date{\today}

\section{introduction}

The sieve method is a classic one in number theory (see, for
example, \cite{FI}). Recently it found some applications in
non-commutative setting. On the one hand,  Bourgain-Gumburd-Sarnak
\cite{BGS1} applied it in studying almost-prime vectors in orbits of
non-commutative groups acting on $\Z^n$. On the other hand, Rivin
\cite{Ri} and Kowalski \cite{Ko} used it to study generic properties
of elements in the mapping class group and arithmetic groups. Our
formulation of the sieve method generalizes and simplifies the
second one and usually falls under the name `Large Sieve'. The goal
of this introduction is to state the general large sieve setting
with respect to group theory and to serve as a guideline for the
proof of Theorem A below. We start by describing the algebraic
problem and its background. After that, we explain how random walks
and sieve methods are used in its solution.

A \emph{virtually nilpotent group} is a group which contains a
nilpotent subgroup of finite index. Mal'cev proved:

\begin{thm}[Mal'cev \cite{Mal}]\label{mal} Let $\Gamma$ be a finitely generated virtually nilpotent
group. Then, for every $m \ge 1$ the set of $m$-powers
$\Gamma^m:=\{g^m \mid g \in \Gamma\}$ contains a finite index
subgroup of $\Gamma$.
\end{thm}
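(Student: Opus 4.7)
My plan is to reduce to the case where $\Gamma$ is finitely generated, torsion-free, and nilpotent, and then to induct on the nilpotency class $c$. The reduction is standard: by hypothesis $\Gamma$ contains a nilpotent subgroup of finite index, which in turn contains a torsion-free subgroup of finite index (a classical theorem for finitely generated nilpotent groups). So $\Gamma$ has a finite-index torsion-free nilpotent subgroup $\Gamma_0$, and any finite-index subgroup of $\Gamma_0$ contained in $\Gamma_0^m$ is automatically of finite index in $\Gamma$ and contained in $\Gamma^m$.

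For the induction, the base case $c=1$ is immediate: $\Gamma \cong \Z^r$ and $\Gamma^m = m\Z^r$ is a subgroup of index $m^r$. For the inductive step, choose a nontrivial central subgroup $Z \le \cen(\Gamma)$ with $\Gamma/Z$ torsion-free nilpotent of class $c-1$ (for example, the isolator of $\gamma_c(\Gamma)$). The inductive hypothesis produces a finite-index subgroup $\bar H \le (\Gamma/Z)^m$; set $H = \pi^{-1}(\bar H)$. For each $h \in H$, pick $g \in \Gamma$ with $\bar g^m = \bar h$, so $g^m = hz$ for some $z \in Z$. Since $Z$ is central, $(gw)^m = g^m w^m = h z w^m$ for every $w \in Z$, and hence $h$ is an $m$-th power as soon as $z^{-1}$ is an $m$-th power in $Z$. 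The abelian case provides the finite-index subgroup $Z^m \le Z$, so the problem reduces to finding a finite-index subgroup $H' \le H$ on which the obstruction class $z \cdot Z^m \in Z/Z^m$ vanishes.

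The main obstacle is that the assignment $h \mapsto zZ^m$ is not a homomorphism in general, because $g \mapsto g^m$ fails to be multiplicative on a non-abelian nilpotent group. To control it, I would invoke the Hall--Petresco formula $(xy)^m = x^m y^m \, c_2^{\binom{m}{2}} \cdots c_c^{\binom{m}{c}}$ with $c_i \in \gamma_i(\langle x, y \rangle)$, which exhibits the defect as a polynomial expression in iterated commutators with binomial-coefficient exponents; after restricting to a suitable principal congruence subgroup of $H$ (defined via divisibility of the $\binom{m}{i}$ and the lower central series filtration), the obstruction becomes a homomorphism whose kernel $H'$ has finite index in $\Gamma$ and lies in $\Gamma^m$. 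A conceptually cleaner alternative is to pass to the Mal'cev completion: $\Gamma$ embeds as a lattice in a simply connected nilpotent real Lie group $G = \exp(\mathfrak g)$, under which the $m$-th power map corresponds to multiplication by $m$ on $\mathfrak g$, and in Mal'cev coordinates $\Gamma \leftrightarrow \Z^k$ it becomes a polynomial map whose leading term (in the natural weight filtration) is $m \cdot I$; one then reads off by elementary means a finite-index subgroup of $\Z^k$ contained in its image.
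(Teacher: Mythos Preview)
The paper does not prove this theorem. It is quoted in the introduction, attributed to Mal'cev with the citation \cite{Mal}, purely as background motivation; the paper's own contributions go in the opposite direction (showing that proper powers are exponentially small in non-virtually-solvable linear groups). There is thus no argument in the paper to compare your proposal against.

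On your outline itself: the reduction to the finitely generated torsion-free nilpotent case and the induction on the nilpotency class via a central $Z$ with torsion-free quotient are correct and standard, and you have correctly located the real difficulty --- the obstruction $h \mapsto zZ^m$ is not a homomorphism. Both remedies you name (the Hall--Petresco identity, or passage to the Mal'cev completion where $g \mapsto g^m$ corresponds to $X \mapsto mX$ on the Lie algebra) are legitimate and are the routes used in the literature. But the sketch stops exactly where the content lies. In the Hall--Petresco approach one must actually produce the congruence subgroup and verify that the correction terms $c_i^{\binom{m}{i}}$ land in $Z^m$ after restriction; ``the obstruction becomes a homomorphism whose kernel has finite index'' is the conclusion, not the argument. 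In the Mal'cev-coordinate approach, the assertion that an upper-triangular polynomial self-map of $\Z^k$ with linear part $m\cdot I$ has image containing a finite-index subgroup of $\Gamma$ (not of the additive group $\Z^k$ --- the two group structures differ) is the heart of the matter and still requires work: one solves for the coordinates iteratively and must bound the denominators that appear by a fixed power of $m$. The plan is sound, but the decisive step is asserted rather than carried out.
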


The converse is not true, even not for finitely generated groups:
For every prime $p$, Golod and Shafarevich built a finitely
generated residually finite  infinite group $\Gamma$ such that the
order of every element of $\Gamma$ is a power of $p$. In particular,
if $m$ is coprime to $p$ then any element of $\Gamma$ is an
$m$-power.

Yet, in \cite{HKLS} two kinds of partial converse results were
proved:

\begin{thm}[Hrushovski-Kropholler-Lubotzky-Shalev \cite{HKLS}]\label{Thm A}
Let $\Gamma$ be a virtually solvable group. If $n \ge 2$ and
$\cup_{m=2}^n \Gamma^{m}$ contains a finite index subgroup of
$\Gamma$ then $\Gamma$ is virtually nilpotent. On the other hand,
there exists a solvable group $\Gamma$ which is not virtually
nilpotent and still there exists $m \ge 2$ such that $\Gamma^m$
contains a coset of a finite index subgroup.
\end{thm}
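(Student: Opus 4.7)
I would prove the two halves separately; the nontrivial content is the first direction, a partial converse to Mal'cev's theorem. Assume $\Gamma$ is virtually solvable, $n\ge 2$, and some finite-index subgroup $H\le\Gamma$ is contained in $\bigcup_{m=2}^{n}\Gamma^{m}$. After passing to a finite-index subgroup of $\Gamma$ we may assume $\Gamma$ is finitely generated and solvable (the hypothesis is inherited). Suppose, for contradiction, that $\Gamma$ is not virtually nilpotent. The strategy is to exhibit a quotient $Q$ of (a finite-index subgroup of) $\Gamma$ in which the corresponding union of $m$-th powers for $2\le m\le n$ cannot contain any finite-index subgroup, giving the contradiction.

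The quotient is produced by the classical structure theory of finitely generated solvable groups: any non-virtually-nilpotent such group admits a quotient (of a finite-index subgroup) of the form $Q=A\rtimes_\phi\Z$, where $A$ is a nontrivial $\Z[\phi^{\pm 1}]$-module of a simple type (say $\Z^r$, $\Z[1/p]$, or $\F_p[t,t^{\pm 1}]$) and $\phi$ acts with an ``expanding'' component (some eigenvalue of absolute value $\ne 1$, or a $p$-adic analogue). In $Q$ the identity
\[
(b,u)^{m}=\bigl(mb,\;(1+\phi^{b}+\phi^{2b}+\cdots+\phi^{(m-1)b})\,u\bigr)
\]
shows that every $m$-th power has first coordinate divisible by $m$ and second coordinate in the image of the multiplier $\sigma_{m,b}:=\sum_{i=0}^{m-1}\phi^{ib}$ acting on $A$. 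Writing $H'\subseteq Q$ for the image of $H$ and $L:=H'\cap A$, the hypothesis forces $L\subseteq\bigcup_{m=2}^{n}mA$ and forces every $(b,u)\in H'$ with $b\ne 0$ to satisfy $m\mid b$ and $u\in\sigma_{m,b/m}A$ for some $m\in\{2,\dots,n\}$. The core of the argument is to show that these conditions are incompatible with $H'$ having finite index: one picks primes $p>n$ coprime to $[\Gamma:H]$ to obstruct the first-coordinate divisibility, and, when that alone is insufficient, uses the growth $[A:\sigma_{m,b}A]\to\infty$ as $b\to\infty$ (guaranteed by the expanding action of $\phi$) to obstruct the second-coordinate conditions.

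For the second claim one exhibits an explicit metabelian counterexample $\Gamma=A\rtimes\Z$ in which, for one well-chosen exponent $m$, the multipliers $\sigma_{m,b}$ degenerate enough to force $\Gamma^m$ to contain a full coset of a finite-index subgroup, while $\phi$ is non-unipotent so that $\Gamma$ remains not virtually nilpotent; the natural candidates are Baumslag--Solitar groups $\Z[1/p]\rtimes\Z$, or semidirect products over a characteristic-$p$ ring in which Frobenius-type identities simplify $\sigma_{m,b}$, and the coset is verified by a direct expansion of the power identity above. \emph{The main obstacle} is the module-theoretic step in the first half: ruling out that a finite-index $H'\le Q$ can satisfy all of the conditions $u\in\sigma_{m,b/m}A$ simultaneously. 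The first-coordinate argument alone handles only the case in which $[\Gamma:H]$ has no prime factor $\le n$; in general one must genuinely use the non-unipotent action of $\phi$ on $A$ combined with a careful choice of an auxiliary prime $p>n$ to conclude.
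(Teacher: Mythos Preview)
The theorem you are attempting to prove is not proved in this paper at all. It is quoted as a result of Hrushovski--Kropholler--Lubotzky--Shalev from \cite{HKLS} and appears solely as background and motivation for the paper's own Theorem~A; no proof or sketch of it is given anywhere in the present paper. There is therefore nothing in the paper to compare your proposal against.

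That said, your sketch is broadly in the spirit of the \cite{HKLS} argument: reducing to quotients of the shape $A\rtimes_\phi\Z$ and using the explicit power formula $(b,u)^m=(mb,\sigma_{m,b}u)$ is the natural line of attack, and Baumslag--Solitar-type examples are exactly what produces the counterexample in the second half. A few genuine gaps in your plan, however: (i) the theorem as stated does \emph{not} assume $\Gamma$ is finitely generated, so your opening reduction ``we may assume $\Gamma$ is finitely generated'' needs a separate argument; (ii) when you pass to a finite-index subgroup $\Gamma'\le\Gamma$, the hypothesis $H\subseteq\bigcup_{m=2}^n\Gamma^m$ does \emph{not} descend to $H\subseteq\bigcup_{m=2}^n(\Gamma')^m$, since the $m$-th roots witnessing membership may lie outside $\Gamma'$ --- this is a real obstacle, not a formality; and (iii) the structural step asserting that every finitely generated solvable, non-virtually-nilpotent group admits a quotient $A\rtimes_\phi\Z$ with $\phi$ having an ``expanding'' eigenvalue is nontrivial and is where most of the work in \cite{HKLS} actually lies. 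Your module-theoretic endgame (using growth of $[A:\sigma_{m,b}A]$) is plausible once such a quotient is in hand, but you have correctly identified it as the main obstacle.
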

\begin{thm}[Hrushovski-Kropholler-Lubotzky-Shalev \cite{HKLS}]\label{Thm B}
Let $\Gamma$ be a finitely generated linear group. If $n \ge 2$ and
finitely many tessellates of $\cup_{m=2}^n \Gamma^{m}$ cover
$\Gamma$ then $\Gamma$ is virtually solvable.
\end{thm}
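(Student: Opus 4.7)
My plan is to argue by contradiction. Assuming $\Gamma$ is not virtually solvable, I will show that $U := \bigcup_{m=2}^{n} \Gamma^{m}$ is so sparse that no finite collection of translates $\gamma_{1}U, \dots, \gamma_{k}U$ can cover $\Gamma$, directly contradicting the hypothesis. The relevant notion of sparsity is the one supplied by the large sieve: exponentially decaying relative density when counted along balls of a fixed word metric on $\Gamma$.

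First I would exploit the linearity of $\Gamma$ to set up congruence quotients. After specializing the matrix entries to a finitely generated integral domain, passing to a finite-index subgroup, and replacing $\Gamma$ by its image modulo a finite normal subgroup if needed, we may assume $\Gamma$ is Zariski dense in a connected semisimple algebraic group $\mathbf{G}$ over a number field (the non-virtual-solvability hypothesis guarantees $\mathbf{G}$ is nontrivial); the positive-characteristic case can be reduced to characteristic zero by a generic-lift specialization. Strong approximation (Nori, Matthews--Vaserstein--Weisfeiler) then produces, for almost all primes $p$ of the appropriate ring of integers, congruence surjections $\pi_{p} \colon \Gamma \twoheadrightarrow S_{p}$ onto finite quasi-simple groups of Lie type, and Clozel's theorem together with the work of Salehi Golsefidy--Varj\'u provides property $(\tau)$ for the family $\{S_{p}\}$.

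The decisive geometric input is that for each fixed $m \in \{2, \dots, n\}$ the $m$-th power map on $\mathbf{G}$ has image missing a subvariety of positive codimension; by the Lang--Weil estimates this translates into a uniform bound
\[
\frac{|S_{p}^{\,m}|}{|S_{p}|} \;\le\; c_{m} \;<\; 1
\]
for all sufficiently large $p$. Since every element of $\Gamma^{m}$ must reduce to an $m$-th power in \emph{every} $S_{p}$ simultaneously, the large sieve developed in this paper combines these per-prime bounds into an estimate of the form $|B_{R} \cap \Gamma^{m}| \le C_{m}\, e^{-\delta_{m} R} |B_{R}|$, where $B_{R}$ denotes the ball of radius $R$ in the chosen word metric. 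Summing over $m = 2, \dots, n$ yields the same exponential-decay bound for $U$, and this is inherited by any finite union of translates of $U$. Since $\Gamma$ itself has full density in $\{B_{R}\}$, no such finite union can equal $\Gamma$, producing the desired contradiction.

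The hardest step is the uniform density bound $c_{m} < 1$: one must verify that on a simple algebraic group the $m$-th power map is never surjective for $m \ge 2$, measure the codimension of the complement, and control the implied Lang--Weil constants uniformly in the prime. A secondary subtlety is to ensure that the sieving primes are dense enough, and property $(\tau)$ effective enough, that the resulting exponential decay rates $\delta_{m}$ remain strictly positive after summing over $m \le n$.
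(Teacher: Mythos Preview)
The paper does not prove this statement: Theorem~\ref{Thm B} is quoted from \cite{HKLS} as background and motivation, with no proof given here. What the present paper proves is Theorem~A, which (in characteristic zero) is strictly stronger: even the full union $\bigcup_{m=2}^{\infty}\Gamma^m$ is exponentially small, and the paper notes that an exponentially small set cannot be covered by finitely many translates.

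Your sketch is, in effect, a re-derivation of the easier finite-$n$ case of Theorem~A using exactly the sieve machinery the paper assembles for the full result: strong approximation, the Salehi-Golsefidy--Varj\'u expansion input, and density bounds for $m$-th powers in the finite quotients. So in characteristic zero your route is valid, but it is not the route of \cite{HKLS}. The paper explicitly contrasts the two, remarking that the argument of \cite{HKLS} ``uses only local data of $\Gamma$, i.e., the images of $\cup_{m=2}^n \Gamma^m$ in finite quotients of $\Gamma$,'' whereas the sieve combines local data with global growth information from the random walk. For the bounded-$n$ statement the sieve is overkill: since only finitely many $m$ are in play, one can work entirely inside finite quotients with no expansion input at all.

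Two gaps in your sketch are worth flagging. First, your claimed reduction of the positive-characteristic case to characteristic zero by ``generic-lift specialization'' is not justified and is not something the paper supplies; Theorem~A is stated only for $\Gamma\le\GL_n(\C)$, and the reduction in Lemma~\ref{reduction} is a specialization \emph{down} to $\GL_n(\Q)$, not a lift from positive characteristic. Second, you phrase exponential smallness via ball counts in the word metric, while the paper's definition and all of its estimates are in terms of random-walk probabilities $\prob(w_k\in Z)$; these notions are related but not interchangeable without additional argument.
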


The formulation of the second theorem of \cite{HKLS} suggests a
stronger result, i.e. does the fact that finitely many tessellates
of the set of {\bf all} proper powers $\cup_{m=2}^\infty \Gamma^{m}$
cover $\Gamma$ imply that $\Gamma$ is virtually solvable? But the
methods of \cite{HKLS}  are not suitable for handling all proper
powers together. The reason is that the proof uses only local data
of $\Gamma$, i.e., the images of $\cup_{m=2}^n \Gamma^{m}$ in finite
quotients of $\Gamma$, and it is clear that the image of
$\cup_{m=2}^\infty \Gamma^{m}$ in such a quotient is the full
quotient group (take $m$ to be coprime to the size of the finite
quotient). Thus, to extend the theorem to the case of all proper
powers one need to combine the local data with some global data on
the group and its elements. A way to do this is to use random walks
as we shell explain below.

A finite subset $\Sigma$ of $\Gamma$ is called \emph{admissible} if
it is symmetric, i.e $\Sigma=\Sigma^{-1}$, and the Cayley graph
$\Cay(\Gamma,\Sigma)$ is not bi-partite. Let $\Sigma$ be an
admissible generating subset of $\Gamma$. Random walks on
$\textrm{Cay}(\Gamma,\Sigma)$ can be used to `measure' a subset
$Z\subseteq \Gamma$ by estimating the probability $\prob(w_k \in Z)$
that the $k^{\text{th}}$-step of a random walk belongs to $Z$ for
larger and larger values of $k$'s. We say that $Z$ is
\emph{exponentially small with respect to $\Sigma$} if there exist
constants $c,\alpha>0$ such that $\prob(w_k\in Z) \le ce^{-\alpha
k}$ for all $k \in \N$. The set $Z$ is called \emph{exponentially
small} if it is exponentially small with respect to all admissible
generating subsets. It is not hard to see that if a subset is
exponentially small then finitely many tessellates of it cannot
cover $\Gamma$. Thus, our first theorem is the desired extension:

\begin{thmA}\label{power theorem} Let $\Gamma$ be a finitely generated subgroup of $\GL_n(\C)$ which
is not virtually-solvable. Then the set of proper powers
$\overset{\infty}{\underset{m=2}\cup} \Gamma^m$  is exponentially
small in $\Gamma$.
\end{thmA}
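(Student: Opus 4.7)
The strategy is to apply a large-sieve inequality for finitely generated groups (developed in the body of the paper) together with structural information about finite quotients of $\Gamma$. Since $\Gamma \leq \GL_n(\C)$ is finitely generated, one may embed $\Gamma \leq \GL_n(R)$ for a finitely generated integral domain $R$ of characteristic zero, and reduce modulo maximal ideals $\mathfrak{p}$ of $R$ with finite residue field $\F_{q_\mathfrak{p}}$ to obtain a family of congruence quotients $\pi_\mathfrak{p}\colon \Gamma \to \GL_n(\F_{q_\mathfrak{p}})$. Let $\G$ denote the Zariski closure of $\Gamma$ in $\GL_n$; since $\Gamma$ is not virtually solvable, $\G^\circ$ has a nontrivial semisimple quotient. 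By Nori--Weisfeiler strong approximation, for almost all $\mathfrak{p}$ the image $\pi_\mathfrak{p}(\Gamma)$ contains a bounded-index subgroup closely related to a finite group of Lie type of bounded rank over $\F_{q_\mathfrak{p}}$; moreover, such families of quotients satisfy property $\tau$, providing the uniform spectral gap needed to feed into the sieve.

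The next step is to decompose the target set by prime exponent. Every proper power $g = h^m$ with $m \geq 2$ is a $p$-th power for some prime $p \mid m$ (write $g = (h^{m/p})^p$), so
\[
\bigcup_{m=2}^{\infty} \Gamma^m \;=\; \bigcup_{p\ \textrm{prime}} \Gamma^p.
\]
A length bound on the random walk shows that $\Gamma^p$ contributes nothing to the $k$-th step unless $p \leq Ck$ for some constant depending on $\Sigma$, so the union on the right is effectively finite of length $O(k)$. It thus suffices to establish a bound $\prob(w_k \in \Gamma^p) \leq A e^{-\alpha k}$ with $A,\alpha$ sufficiently uniform in $p$, and then to sum.

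Fixing a prime $p$, one applies the large sieve with $Z = \Gamma^p$ using as auxiliary quotients the family of those $\pi_\mathfrak{p}$ for which $p$ divides $|\pi_\mathfrak{p}(\Gamma)|$ and is coprime to the residue characteristic. For such $\mathfrak{p}$ the image $\pi_\mathfrak{p}(Z) = \pi_\mathfrak{p}(\Gamma)^p$ has density bounded away from $1$ in $\pi_\mathfrak{p}(\Gamma)$, which is the key group-theoretic input: on a maximal torus $T$ of $\pi_\mathfrak{p}(\Gamma)$ containing an element of order divisible by $p$ one has $[T:T^p] \geq p$, and a positive proportion of elements is regular semisimple and conjugate into such a torus. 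Running the sieve over a sufficiently rich set of such $\mathfrak{p}$ then yields exponential decay by combining the local density bound with the spectral gap. The main obstacle will be arranging a sufficiently dense supply of $\mathfrak{p}$ with $p \mid |\pi_\mathfrak{p}(\Gamma)|$ uniformly in $p$, which requires a Chebotarev-type input on the orders of Frobenius elements in the semisimple part of $\G^\circ$, together with a careful accounting of how the sieve constants depend on $p$ so that the sum over $p \leq Ck$ retains exponential decay in $k$.
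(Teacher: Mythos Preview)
Your overall strategy aligns with the paper's, but there is a genuine gap. The claim that ``$\Gamma^p$ contributes nothing to the $k$-th step unless $p \le Ck$'' is false. The length bound you have in mind comes from an eigenvalue argument: if $w_k = g^p$ and $g$ has an eigenvalue $\lambda$ with $|\lambda| > 1+\varepsilon$, then $\|w_k\| \ge (1+\varepsilon)^p$ forces $p \le Ck$. But this breaks down when all eigenvalues of $g$ are roots of unity, i.e.\ when $g$---and hence $w_k$---is \emph{virtually unipotent}. Such elements can be $p$-th powers for arbitrarily large $p$ while sitting in a ball of fixed radius; the identity is the trivial example. The paper's Lemma~\ref{two parts} (and Lemma~\ref{two parts2} in the general case) states exactly this dichotomy: either $w_k$ is virtually unipotent, or the exponent is at most $tk$.

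You are therefore missing an entire branch of the argument: one must show separately that the set of virtually unipotent elements of $\Gamma$ is itself exponentially small. The paper does this (Corollary~\ref{unipotent}, Corollary~\ref{234454}) by observing that such elements lie in a proper subvariety of the Zariski closure defined over $\Q$ (there is a single $m$ with $g^m$ unipotent for every virtually unipotent $g \in \Gamma$), and then combining a Lang--Weil count in the finite quotients with the spectral gap (Proposition~\ref{subvarity}). This is also why the paper first specializes to $\GL_n(\Q)$ with semisimple Zariski closure (Lemma~\ref{reduction}) rather than working over an arbitrary finitely generated ring as you do: both the eigenvalue lower bound and the subvariety argument are cleanest over $\Q$, and the supply of good primes is then handled by the quantitative Dirichlet theorem rather than the Chebotarev-type input you allude to but do not make precise.
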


Despite of Theorem \ref{Thm B}, Theorem A is somewhat surprising:
The set $\cup_{m=2}^\infty \Gamma^m$ is dense in the profinite
topology of $\Gamma$ (as explained above) and still it is
exponentially small.

A reduction process, given in subsection \ref{reduction},  shows
that it is enough to prove the claim for a finitely generated
subgroup $\Gamma$ of $\GL_n(\Q)$ whose Zariski-closure is
semisimple. In order to avoid the technical difficulties we will
focus for now on the case where $\Gamma$ is a Zariski-dense subgroup
of $\SL_n(\Q)$. Fix such a subgroup $\Gamma$ and  some admissible
generating  subset $\Sigma$. For $k\in \N$ only the elements of
$\Gamma$ which belong to the ball of radius $k$,
$\mathcal{B}_\Sigma(k)$, can occur as the $k^{\text{th}}$-step of a
random walk on $\textrm{Cay}(\Gamma,\Sigma)$. As in \cite{LMR}, we
use in Lemma \ref{two parts} arguments involving matrices norms and
number theory to show that if $k$ is large enough and $g \in
\mathcal{B}_\Sigma(k)$ is a proper power then $g$ is either
virtually-unipotent (i.e. some positive power of $g$ is unipotent)
or $g$ is an $m$-power for some $2 \le m \le k^2$ (in fact, for some
$2 \le m \le ck$ where $c$ is a constant depending on $\Sigma$).

The advantage in considering random walks is now clear. By
considering only  proper powers which can occur at the
$k^{\text{th}}$-step of a random walk we only have to look at
virtually-unipotent elements and at finitely many powers. Thus, we
can divide the proof into two parts, each of them can be proven by
looking at the finite quotients.

The first part is to show that the set of all virtually unipotent
elements (not necessarily proper powers) is exponentially small.
Proposition \ref{subvarity} shows that if $V(\C)$ is a proper
subvariety of $\SL_n(\C)$ defined over $\Q$ then $V(\C)\cap\Gamma $
is exponentially small. In turn, Corollary \ref{unipotent} implies
that the set of virtually unipotent elements of $\Gamma$ is
contained in a proper subvariety of $\SL_n(\C)$ defined over $\Q$.

The second part is to find  positive constants $\gamma$ and $r$ such
that for every $k \ge r$ and every $2 \le m\le k^2$ we have
$\prob(w_k\in\Gamma^m) \le e^{-\gamma k}$. Indeed, if such constants
exist and $\alpha$ is a positive constant strictly smaller the
$\gamma$ then for large enough $k$ $$\prob(w_k \in\cup_{2 \le m \le
k^2}\Gamma^m) \le k^2e^{-\gamma k} \le e^{-\alpha k }.$$ However,
for every $m\ge 1$ the set of $m$-powers is Zariski-dense in
$\SL_n(\C)$ so we can not use the same argument as for the first
part. This brings us to the large sieve method. Corollary \ref{group
large sieve} gives the general quantitative statement of this method
while Theorem B stated below is an easy consequence which is
suitable for now,

\begin{thmB} Let $\Gamma$ be a group generated by a finite symmetric set $\Sigma$ for which
$\Cay(\Gamma,\Sigma)$ is not bi-partite. Let $(N_j)_{j\ge 2}$ be a
family of normal finite index subgroups. Let $Z \subseteq \Gamma$
and assume that:
\begin{itemize}
\item[1.]  $\Gamma$ has property-$\tau$ w.r.t to the family $\{N_i \cap N_j \mid i,j \ge
2\}$.
\item[2.] The sequence $(|\Gamma/N_j|)_{j \ge 2}$ grows polynomially in
$j$.
\item[3.] $|\Gamma/N_i \cap N_j|=|\Gamma/N_i||\Gamma/N_j|$ for distinct $i$ and
$j$.
\item[4.] There exists $c>0$ such that $|ZN_j/N_j|\le (1-c)|\Gamma/N_j|$ for
every $j \ge 2$.
\end{itemize}
Then there are positive constants $\gamma$ and $r$ depending only on
$\Sigma$, $c$ and the growth of $(|\Gamma/N_j|)_{j \ge 2}$ such that
$\prob(w_k \in Z)\le e^{-\gamma k}$ for every $k \ge r$. In
particular, $Z$ is exponentially small.
\end{thmB}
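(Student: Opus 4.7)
The plan is to apply the general large-sieve inequality (Corollary \ref{group large sieve}) to the random-walk measure $\prob(w_k \in \cdot)$ at step $k$, with the sieving family $\{N_j\}_{j=2}^{M}$, and then optimize the parameter $M=M(k)$. For each $j$ set $\Omega_j := \Gamma/N_j \setminus (ZN_j/N_j)$; by hypothesis (4), $|\Omega_j| \ge c\,|\Gamma/N_j|$. Since every $g \in Z$ satisfies $gN_j \in ZN_j/N_j$, the set $Z$ is contained in the sifted set $S := \{g \in \Gamma : gN_j \notin \Omega_j \text{ for all } 2 \le j \le M\}$, and Corollary \ref{group large sieve} yields
$$\prob(w_k \in Z) \le \prob(w_k \in S) \le \frac{\Delta_k}{H},$$
where $H := \sum_{j=2}^M |\Omega_j|/(|\Gamma/N_j|-|\Omega_j|)$ and $\Delta_k$ is the large-sieve constant of the measure at time $k$. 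Since each summand is at least $c/(1-c)$, we get $H \ge (M-1)c/(1-c)$ for free.

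The heart of the argument is an upper bound on $\Delta_k$. By duality, $\Delta_k$ equals the squared operator norm of the map
$$(f_j)_{j=2}^M \;\longmapsto\; \sum_j f_j \circ \phi_j \; : \; \bigoplus_{j=2}^M \ell^2_0(\Gamma/N_j) \to L^2(\Gamma, \prob),$$
where $\phi_j\colon\Gamma \to \Gamma/N_j$ is the projection. Expanding the squared target norm yields diagonal plus off-diagonal terms of the form $\langle f_j \circ \phi_j, f_l \circ \phi_l\rangle_{\prob}$, each an integral against the pushforward $\mu_{jl}$ of $\prob$ to $\Gamma/(N_j\cap N_l)$. By hypothesis (3) this quotient is identified with $\Gamma/N_j\times\Gamma/N_l$, so a uniform $\mu_{jl}$ would annihilate every cross term because $f_j, f_l$ have mean zero in different coordinates. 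Property-$\tau$ (hypothesis 1), together with the non-bipartiteness of $\Cay(\Gamma,\Sigma)$, provides a two-sided spectral gap $\lambda < 1$ for the random walk on every $\Gamma/(N_j\cap N_l)$; combined with the size bound $|\Gamma/(N_j \cap N_l)| \le C^2(jl)^d$ coming from hypothesis (2), Cauchy--Schwarz over the $M^2$ pairs yields
$$\Delta_k \;\le\; 1 + C' M^{d+1}\lambda^{k}.$$

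Finally, set $M := \lfloor \lambda^{-k/(2(d+1))}\rfloor$, so that $M^{d+1}\lambda^k \le \lambda^{k/2}$ tends to zero and $\Delta_k \le 2$ once $k$ exceeds a threshold $r$, while $H \ge (M-1)c/(1-c)$ grows exponentially in $k$. Combining,
$$\prob(w_k \in Z) \;\le\; \frac{2(1-c)}{c\,(M-1)} \;\le\; e^{-\gamma k}, \qquad \gamma := \frac{-\log \lambda}{2(d+1)} > 0,$$
after enlarging $r$ to absorb the multiplicative constants. The main technical obstacle is the bound on $\Delta_k$ above: while the off-diagonal terms fall cleanly to Cauchy--Schwarz against the $L^2$ deviation of $\mu_{jl}$ from uniform, the diagonal contribution demands care, requiring either Plancherel on each $\Gamma/N_j$ or a separate estimate on $\||f_j|^2\|$; once this estimate is in hand the remaining optimization is routine.
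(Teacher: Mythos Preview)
Your strategy is sound and will prove Theorem~B, but it is not the paper's route and your key citation is confused: what you invoke as ``Corollary~\ref{group large sieve}'' producing a $\Delta/H$ bound is, in this paper, the Group Large Sieve theorem itself, of which Theorem~B is just the special case $\Lambda=\Gamma$. The $\Delta/H$ inequality you are really using is the Montgomery--Kowalski large sieve, which the paper never isolates; you would need to supply it separately rather than cite \ref{group large sieve}.

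The paper instead deduces Theorem~B from Theorem~\ref{sieve theorem}, whose engine is the elementary Lemma~\ref{prob}: Chebyshev applied to $X=\sum_{i\le L}1_{A_{i,k}}$, where $A_{i,k}=\{\pi_i(w_k)\in T_i\}$ and $T_i=\Gamma/N_i\setminus\pi_i(Z)$, gives $\proba(X=0)\le(L+L^2\triangle)/\E(X)^2$ with $\triangle$ the maximal pairwise covariance. Hypotheses~(1) and~(3) together with Corollary~\ref{property tau corol} bound $\triangle\le 4L^de^{-\delta k}$, hypothesis~(4) gives $\E(X)\ge c(L-s)$, and one takes $L=e^{\delta k/(d+1)}$. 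Your bound $\Delta_k\le 1+C'M^{d+1}\lambda^k$ and choice of cutoff $M$ correspond precisely to this variance estimate and choice of $L$. The Chebyshev route is the more elementary one --- only covariances of \emph{indicators} appear, so the diagonal-term subtlety you flag never arises --- while your formulation is the standard analytic-number-theory packaging and generalizes more readily to weighted or amplified sieves.
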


Returning to our case, $\Gamma$ is a Zariski-dense subgroup of
$\SL_n(Z)$. Fix $m\ge 2$ and define $Z_m$ to be the set of
$m$-powers. We start by verifying the conditions of Theorem B in
order to bound $\prob(w_k\in Z_m)$. For every prime $p$ define
$M_p:=\ker \pi_p$ where $\pi_p:\Gamma \rightarrow \SL_n(\Z/p\Z)$ is
the modulo-$p$ homomorphism. Let $\mathcal{P}$ be the set of primes.
Then, $\Gamma$ has property-$\tau$ w.r.t to the family $\{M_p \cap
M_q \mid p,q \in \mathcal{P}\}$ by the recent result of
Salehi-Golsefidy and Varju \cite{SGV}. In fact, for our special case
the work Varju \cite{Va} is enough. The Strong Approximation Theorem
of Wiesfeiler \cite{We} and Nori \cite{No} implies that if $p$ and
$q$ are distinct large enough primes then
$\pi_{pq}(\Gamma)=\SL_n(\Z/pq\Z)$. Recall that $\SL_n(\Z/pq\Z)$ is
isomorphic to $\SL_n(\Z/p\Z)\times\SL_n(\Z/p\Z)$ so $|\Gamma/M_p
\cap M_q|=|\Gamma/M_p||\Gamma/M_q|$ for distinct large enough primes
$p$ and $q$. Lemma \ref{powers lemma} shows that if $p$ is a prime
which belongs to the sequence $(1+mi)_{i \ge s}$ where
$s:=3\binom{n}{2}$ then $|\pi_p(Z_m)| \le
\left(1-\frac{1}{6n!}\right)|\pi_p(\Gamma)|$, so we can take
$c:=\frac{1}{6n!}$. Let $(p_i)_{i \ge 2}$ be an ascending
enumeration of the large enough primes which belong to the sequence
$(1+mi)_{i \ge s}$ and define  $N_i:=\ker \pi_{p_i}$. Then, $\Gamma$
has property-$\tau$ w.r.t to the family $\{N_i \cap N_j \mid i,j \ge
2\}$ and $|ZN_j/N_j|\le (1-c)|\Gamma/N_j|$ for every $j \ge 2$. The
last condition left to verify is that $(|\Gamma/N_j|)_{j \ge 2}$
grows polynomially. Since $|\Gamma/N_j|\le p_j^{n^2}$ the question
about the growth of $(|\Gamma/N_j|)_{j \ge 2}$ translates to a
question about the density of the primes in the sequence $(1+mi)_{i
\ge s}$. We use a quantitative version of Dirichlet's Theorem about
primes in arithmetic progression (see Theorem \ref{qdt} and
Corollary \ref{qdl} below) to estimate this density and to show that
$(|\Gamma/N_j|)_{j \ge 2}$ grows polynomially. Thus all the
conditions of Theorem B hold and there are positive constants
$\gamma$ and $r$ for which $\prob(w_k \in Z_m)\le e^{-\gamma k}$ for
every $k \ge r$. In fact, a more detailed study of the growth of
$(|\Gamma/N_i|)_{i \ge 2}$ shows that the $\gamma$ and $r$ are
independent of $m$ provided $m \le k^2$. The (sketch of the) proof
of the special case is now complete.

The main difficulty in the proof of the general case lies in proving
the existence of the constant $c$ needed in Theorem B. This requires
a delicate analysis of the automorphism groups of almost simple
groups. We dedicate Section \ref{chap liegroups} to this analysis,
which though quite technical, might be of interest on its on. A
non-quantitative version of main theorem of Section \ref{chap
liegroups} (Theorem \ref{power main4}) is:

\begin{thmC}\label{power main1} Fix $d,l,r \in \N^+$. Then there are coprime $a,b \in \N^+$
and a positive constant $c$ such that for every prime $p$ which
belongs the arithmetic sequence  $(a+bj)_{j \ge 1}$ the following
claim holds:

Let $G$ be a finite group with a non-trivial normal subgroup $H$.
Furthermore, assume that $H$ is isomorphic to the direct product of
at most $r$ copies of finite simple groups of Lie type of rank $l$
over the field $\F_{p^d}$ with ${p^d}$ elements. Then for every
coset $L$ of $H$ we have $|\{g^m \mid g \in L\}| \le (1-c)|L|$.
\end{thmC}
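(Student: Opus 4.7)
The plan is to combine the structure theory of finite groups of Lie type with Dirichlet's theorem on primes in arithmetic progressions. I would first reduce to the case where $H$ is simple. Writing $H = H_1 \times \cdots \times H_s$ with $s \le r$, the conjugation action of a representative $x$ of the coset $L = xH$ permutes the factors $H_i$. Restricting to a single $\langle x\rangle$-orbit of length $t \le r$ and projecting the $m$-th power map $g\mapsto g^m$ onto the first factor of that orbit yields, after freezing the other coordinates, a ``twisted-power'' map $h \mapsto (\alpha h)^{M}$ on a single simple factor $H_0$, where $\alpha \in \Aut(H_0)$ and $M$ is bounded in terms of $m$ and $r$. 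It therefore suffices to bound the image of every such twisted-power map by $(1-c)|H_0|$ uniformly in $\alpha$ and $M$.

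Next I would choose a favorable maximal torus. For a simple group $H_0$ of Lie type of rank $l$ over $\F_q$ with $q=p^d$, the conjugacy classes of maximal tori are indexed by $F$-conjugacy classes $[w]$ of the Weyl group $W$, and the orders $|T_{[w]}|$ factor as products of values of cyclotomic polynomials $\Phi_k(q)$ for $k$ bounded by a function of $l$. Fix a prime $\ell$ dividing $M$. Since $|W|$ is bounded in terms of $l$, for all but finitely many $\ell$ there exists a class $[w]$ whose characteristic polynomial on the root lattice involves a cyclotomic factor $\Phi_k$ for which the congruence $\Phi_k(p^d)\equiv 0 \pmod{\ell}$ is satisfiable by a congruence condition on $p$. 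Imposing this condition (together with the finitely many small-$\ell$ and coprimality adjustments) produces coprime $a,b\in\N^+$ such that for every prime $p$ in the progression $(a+bj)_{j\ge 1}$ the divisibility $\ell\mid|T_{[w]}|$ holds, and Dirichlet's theorem guarantees this progression contains infinitely many primes.

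The core estimate is then that a positive proportion $|[w]|/|W|-o_q(1)$ of elements of $H_0$ is regular semisimple and conjugate into $T_{[w]}$. For such a regular semisimple $t$, the centralizer in $H_0$ equals $T_{[w]}$, so the equation $(\alpha h)^{M}=t$ forces $\alpha h$ to lie in the normalizer of $T_{[w]}$. Being an $M$-th power preimage of $t$ in the coset then becomes a question inside the abelian group $T_{[w]}$ twisted by $\alpha$: $t$ must lie in the image of the twisted-norm map $N_\alpha^{(M)}\colon T_{[w]}\to T_{[w]}$, $t\mapsto\prod_{i=0}^{M-1}\alpha^{i}(t)$. By the choice of $\ell$ and $[w]$, this image has index at least $\ell$ in $T_{[w]}$, supplying a positive fraction of non-$M$-th powers and hence the constant $c$.

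The principal obstacle is choosing $[w]$ so that the torus class is $\alpha$-stable setwise for every outer automorphism $\alpha$ that can arise, and then controlling the image of $N_\alpha^{(M)}$ uniformly in $\alpha$. Outer automorphisms of $H_0$ come in diagonal, field, and graph flavors, each twisting the Weyl action on the torus differently, and one must also track the interaction with the permutation of factors arising in the reduction above. A systematic treatment requires the full classification of outer automorphism groups of almost simple groups of Lie type and their action on the character lattice --- precisely the delicate analysis carried out in Section~\ref{chap liegroups}.
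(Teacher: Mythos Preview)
Your outline follows the same broad architecture as the paper (pass to a single orbit of simple factors, locate a torus containing an element of the right order, count regular elements), but it diverges from the paper in two places, and the first is a genuine gap.

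The reduction to a single simple factor is not correct as stated. Freezing $g_2,\dots,g_t$ and reading off the first coordinate of $\bigl((g_1,\dots,g_t)x\bigr)^m$ does \emph{not} produce a map of the shape $h\mapsto(\alpha h)^M$: already for $t=2$, $x$ the transposition, and $m=3$ one gets $g_1\mapsto g_1g_2g_1$, which after the substitution $h=g_2g_1$ becomes a translate of $h\mapsto h^2$, so the effective exponent is $2$, not $3$. In general the exponent that emerges depends on $m\bmod t$, and the prime $\ell$ you would then need bears no a priori relation to $m$. The paper never attempts this reduction. It stops at a single cyclic orbit, keeps $G=(G_q^*)^r$ together with the cycling automorphism $\zeta^*$, takes the product torus $\tilde T=(T^*)^r$ as its maximal abelian subgroup, and applies Proposition~\ref{power proportion prop} directly to the pair $(G,\zeta^*)$.

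Second, Section~\ref{chap liegroups} does \emph{not} carry out the programme you sketch at the end --- selecting a Weyl class $[w]$ adapted to $\ell$ and then verifying $\alpha$-stability of $T_{[w]}$ across all outer $\alpha$. That is exactly the case analysis the paper avoids. Its device (Lemma~\ref{stable group}) is that the \emph{split} torus $T$ contains a subgroup $D\cong\F_p^*$ that is pointwise fixed by every diagonal, field, and graph automorphism simultaneously. One therefore always takes $[w]=1$; the single congruence $p\equiv 1\pmod m$ forces $m\mid|D|\le|T_\mu|$ for every relevant $\mu$, and a further congruence (Lemmas~\ref{bounded order representitive.} and~\ref{bounded order representitive2}) uniformly bounds the order $n$ of the outer part $\mu$. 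No twisted-norm computation is needed: Proposition~\ref{power proportion prop} uses only that $T_\mu$ contains an element $s$ of order $m$, so that $(th)^m=(tsh)^m$ already collapses the power map two-to-one on a large piece of $T_\mu$. Your assertion that the twisted norm $N_\alpha^{(M)}$ has image of index $\ge\ell$ \emph{uniformly in $\alpha$} is precisely the unproved step, and establishing it along your lines would require substantially more than what the paper actually does.
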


The current paper is a first in a series of three (see \cite{LuMe}
and \cite{LuMe2}) in which the same sieve method is applied to
obtain results on the mapping class group and on the automorphism
group of a free group, respectiviliy. It seems that Theorem B has
the potential of having more applications in group theory (see also
\cite{Lu2}).

{\bf Acknowledgments.} The authors are grateful to the ERC and the
ISF for partial support. They also want to thank Emmanuel
Breuillard, Dorian Goldfeld, Emmanuel Kowalski and Ron Peled for
useful conversations.

\section{Random walks, expanders and unipotent elements}\label{chap tau}

\subsection{Random Walks}

\begin{dfn} Let $\Gamma$ be a group. A multi-subset $\Sigma$ of $\Gamma$ is
called \emph{symmetric} if for every $s \in \Sigma$ the number of
times $s$ occurs in $\Sigma$ equals to the number of times $s^{-1}$
occurs in $\Sigma$. A finite symmetric multi-subset $\Sigma$ of
$\Gamma$ is called \emph{admissible} if the Cayley graph
$\Cay(\Gamma,\Sigma)$ is not bi-partite, e.g. the identity belongs
to $\Sigma$.
\end{dfn}

Fix an admissible generating multi-subset
$\Sigma=[s_1,\ldots,s_{|\Sigma|}]$ of a group $\Gamma$. Since
$\Sigma$ is a multi-set, $\Cay(\Gamma,\Sigma)$ might contains self
loops and multiple edges. Let $\bar{\Sigma}\subseteq \Gamma$
consists of the elements which belongs to $\Sigma$. Note that
$\Cay(\Gamma,\Sigma)$ is connected if and only if $\bar{\Sigma}$
generates $\Gamma$ and $\Cay(\Gamma,\Sigma)$ is bi-partite if and
only if $\bar{\Sigma}$ satisfies an odd relation, e.g. contains the
identity.

A \emph{walk on $\Cay(\Gamma,\Sigma)$} is an infinite sequence of
edges $(e_k)_{k \in \N^+}$ such that the initial vertex of $e_1$ is
the identity and the terminal vertex of $e_k$ is the initial vertex
of $e_{k+1}$ for every $k \in \N^+$. The initial vertex of $e_{k+1}$
is called the $k^{\textrm{th}}$-step of the walk, in particular,
$w_0$ is the identity. The set of walks $\walks(\Sigma)$ can be
identified with $\{1,\ldots,|\Sigma|\}^{\N}$. Hence, the uniform
probability measure of $\{1,\ldots,|\Sigma|\}$ induces a structure
of a probability space on $\walks(\Sigma)$. Once $\walks(\Sigma)$
becomes a probability space, the term `random walk' has a natural
meaning. However, it is sometimes useful to think of a random walk
on $\Cay(\Gamma,\Sigma)$ as constructed as follows: The random walk
starts at the identity and if it arrives to a vertex $v$ at some
step then in the next step it will use an edge which starts at $v$
and every such an edge as probability $\frac{1}{|\Sigma|}$ to be
used.

For a subset $Z$ of $\Gamma$ we denote the probability that the
$k^{\textrm{th}}$-step of a walk belongs to $Z$ by $\prob(w_k \in
Z)$. Of course, for a general $Z$ the limit $\lim_{k \rightarrow
\infty} \prob(w_k \in Z)$ might not exist and even if it does, this
limit might depend on $\Sigma$. However, in many natural cases this
limit exists and does not depend on $\Sigma$. For example if $Z$ is
a finite index subgroup of $\Gamma$ then always $\lim_{k \rightarrow
\infty} \prob(w_k \in Z)=[\Gamma:Z]^{-1}$.

In the sequel we will present and apply the large sieve method which
helps to show that certain subsets are `very small'. More formally,
a subset $Z\subseteq \Gamma$ is called \emph{exponentially small
with respect to $\Sigma$} if there are positive constants $c$ and
$\alpha$ such that $\prob(w_k \in Z) \le ce^{-\alpha k}$ for every
$k \in \N$. This means that not only that this limit is zero but
also that there is an `exponentially fast' convergence to this
limit. A set is \emph{exponentially small} if it exponentially small
with respect to every admissible generating multi-subset of
$\Gamma$.

The reason that we chose to work with a multi-sets $\Sigma $ instead
of the underlying set $\bar{\Sigma}$ is that when passing from the
group $\Gamma$ to a quotient, where the image of $Z$ is
exponentially small, we want to deduce that $Z$ ia exponentially
small. However, the quotient homomorphism does not have to be
injective on the generating set so we regard its image as a
multi-set.

\subsection{Property-$\tau$}

Let us now define expanders and property-$\tau$. A more detailed
discussion about these subject can be found in \cite{HLW} and
\cite{Lu}. Let $X$ be an undirected $d$-regular graph, self loops
and multiple edges are allowed. Let $n$ be the number of vertices of
$X$. The normalized adjacency matrix of $X$, denoted by $A_X$, is an
$n \times n$ matrix whose $(v,u)$ entry is $\frac{d_{v,u}}{d}$ where
$d_{v,u}$ is the number of edges in $X$ between vertex $u$ and
vertex $v$. Being real and symmetric, the matrix $A_X$ has $n$ real
eigenvalues which we denote by $\lambda_1 \ge \lambda_2 \ge \cdots
\ge \lambda_n$. It is not difficult to see that all the eigenvalues
of $A_X$ lie between $-1$ and $1$. More precisely, $\lambda_1=1$ and
if the graph is connected then $\lambda_2<1$. The \emph{spectral
gap} of $X$ is defined to be $1-\lambda_2$.  A family of graphs is
called \emph{$\varepsilon$-expander} if the spectral gap of every
graph in this family is at least $\varepsilon$. It is called an
\emph{expander} if it is $\varepsilon$-expander for some
$\varepsilon>0$

The following lemma is a group theoretic formulation of Theorem 3.2
of \cite{HLW}.
\begin{lemma}\label{expander lemma} Let $\Gamma$ be a finite group with a finite symmetric generating multi-set $\Sigma$.
Let $A$ be the normalized adjacency metrix of of the Cayley graph
$\textrm{Cay}(\Gamma,\Sigma)$ and denote the eigenvalues of $A$ by
$\lambda_1 \ge \lambda_2 \ge \cdots \ge \lambda_n$. Define
$\alpha:=\max(|\lambda_2|,|\lambda_n|)$. Then for every subset $T
\subseteq \Gamma$ and every $k \in \N^+$ we have:
$$\left| \prob(w_k \in T)-\frac{|T|}{|\Gamma|}\right| \le
\sqrt{|\Gamma|}\alpha^k. $$ \begin{flushright}
$\Box$\end{flushright}
\end{lemma}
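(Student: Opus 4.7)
The plan is to compare the distribution of the random walk at step $k$ with the uniform distribution on $\Gamma$ through an $L^2$ estimate, exploiting the fact that $A$ is real and symmetric since $\Sigma$ is a symmetric multi-set. Let $\pi_0$ denote the probability distribution supported at the identity and $u=(1/|\Gamma|,\ldots,1/|\Gamma|)$ the uniform distribution on $\Gamma$. The transition matrix of the random walk is exactly $A$: from any vertex $v$ the walk moves to $vs$ for $s$ chosen uniformly in $\Sigma$, and the number of $s\in\Sigma$ with $vs=v'$ equals $d_{v,v'}$. Therefore $\pi_k=\pi_0 A^k$, and writing $\mathbf{1}_T$ for the indicator of $T$, we have $\prob(w_k\in T)=\langle \pi_k,\mathbf{1}_T\rangle$ and $|T|/|\Gamma|=\langle u,\mathbf{1}_T\rangle$ in the standard inner product on $\mathbb{R}^\Gamma$.

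The key step is to bound $\|(\pi_0-u)A^k\|_2$. Because $A$ is real symmetric it admits an orthonormal eigenbasis $v_1,\ldots,v_n$, with $v_1=|\Gamma|^{-1/2}\mathbf{1}$ corresponding to $\lambda_1=1$. A direct calculation yields $\langle\pi_0,v_1\rangle=\langle u,v_1\rangle=|\Gamma|^{-1/2}$, so $\pi_0-u$ is orthogonal to $v_1$. Expanding $\pi_0-u=\sum_{i\ge 2}c_i v_i$ and noting that $A^k$ multiplies the $v_i$-component by $\lambda_i^k$ with $|\lambda_i|\le\alpha$, we obtain
$$\|(\pi_0-u)A^k\|_2\;\le\;\alpha^k\|\pi_0-u\|_2\;\le\;\alpha^k,$$
where the last inequality uses $\|\pi_0-u\|_2^2=(1-1/|\Gamma|)^2+(|\Gamma|-1)/|\Gamma|^2=1-1/|\Gamma|\le 1$.

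Combining these observations via Cauchy--Schwarz gives
$$\left|\prob(w_k\in T)-\frac{|T|}{|\Gamma|}\right|=\bigl|\langle(\pi_0-u)A^k,\mathbf{1}_T\rangle\bigr|\;\le\;\alpha^k\|\mathbf{1}_T\|_2=\alpha^k\sqrt{|T|}\;\le\;\sqrt{|\Gamma|}\,\alpha^k,$$
which is the claimed inequality. I do not anticipate any real obstacle: this is the classical expander mixing / convergence-to-stationarity estimate recast in group-theoretic language. The only minor point worth flagging is the multi-set convention, which is precisely what keeps $A$ symmetric and makes the transition matrix coincide with the normalized adjacency matrix. Observe that admissibility (the non-bipartiteness hypothesis) is not used in the inequality itself; it only becomes relevant when one wants to conclude exponential decay, i.e.\ $\alpha<1$, by ruling out the eigenvalue $-1$.
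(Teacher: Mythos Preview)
Your argument is correct and is precisely the standard spectral proof of this estimate. The paper itself does not supply a proof: it simply records the lemma as a group-theoretic reformulation of Theorem~3.2 of \cite{HLW} and closes with a $\Box$, so there is nothing to compare beyond noting that your write-up is exactly the argument one finds in that reference.
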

Let $\Gamma$ be a finitely generated group and let $\mathcal{N}$ be
a family of finite index normal subgroups of $\Gamma$. The group
$\Gamma$ has \emph{property-$\tau$} w.r.t $\mathcal{N}$ if for some
finite symmetric generating multi-set $\Sigma$ and some
$\varepsilon>0$ the family of Cayley graphs
$\{\mathrm{Cay}(\Gamma_N,\Sigma_N)\mid N \in \mathcal{N}\}$ is an
$\varepsilon$-expander where $\Gamma_N$ and $\Sigma_N$ are the
images of $\Gamma$ and $\Sigma$ under the quotient homomorphism
$\Gamma \rightarrow \Gamma/N$ ($\Sigma_N$ is a multi-set). It is not
difficult to show that if $\Gamma$ has property-$\tau$ w.r.t
$\mathcal{N}$ then for every finite symmetric generating multi-set
$\Sigma$ there is $\varepsilon>0$ such that the family of Cayley
graphs $\{\mathrm{Cay}(\Gamma_N,\Sigma_N)\mid N \in \mathcal{N}\}$
is an $\varepsilon$-expander, however, $\varepsilon$ may depend on
the generating multi-set. The maximal $\varepsilon$ for which the
family $\{\mathrm{Cay}(\Gamma_N,\Sigma_N)\mid N \in \mathcal{N}\}$
is an $\varepsilon$-expander is called the \emph{expansion constant
of $\Sigma$}.

We want to apply Lemma \ref{expander lemma} to groups with
property-$\tau$. However, we have to be a little bit cautious. The
expander property bounds  the second largest eigenvalue of the
normalized adjacency of the above graphs, but it does not tell us
anything about the absolute value of the smallest eigenvalue. Still,
in our case this can be overcome.

\begin{lemma}\label{eigenvalue} Let $\Sigma$ be a symmetric generating multi-set of
$\Gamma$. If $\Cay(\Gamma,\Sigma)$ is not bi-partite then there is
some constant $c>-1$ such that for every finite index normal
subgroup $N$ of $\Gamma$ the smallest eigenvalue of the normalized
adjacency matrix of $\mathrm{Cay}(\Gamma_N,\Sigma_N)$ is greater
than $c$. In fact, $c$ depends only on the size of $\Sigma$ and the
length of the shortest odd cycle in $\Cay(\Gamma,\Sigma)$.
\end{lemma}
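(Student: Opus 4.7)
The plan is to establish a quantitative version of the classical equivalence ``$\lambda_n=-1$ iff the graph is bipartite''. Specifically, if $2k+1$ denotes the length of the shortest odd cycle of $\Cay(\Gamma,\Sigma)$, I will show that for every finite index normal subgroup $N\triangleleft\Gamma$ the smallest eigenvalue $\lambda_n$ of the normalized adjacency matrix $A$ of $\Cay(\Gamma_N,\Sigma_N)$ satisfies $\lambda_n\ge -1+\tfrac{2}{|\Sigma|(2k+1)^2}$. Any relation $s_{i_1}\cdots s_{i_{2k+1}}=e$ realizing the odd cycle in $\Gamma$ descends to a closed walk of the same length at every vertex of $\Gamma_N$, so the bound is uniform in $N$ and depends only on $|\Sigma|$ and $2k+1$, as required.

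The first ingredient is the identity
\[
\bigl\langle(I+A)f,f\bigr\rangle=\frac{1}{2|\Sigma|}\sum_{v\in\Gamma_N}\sum_{s\in\Sigma}\bigl(f(v)+f(vs)\bigr)^2,
\]
valid for any real $f$ on $\Gamma_N$, obtained by expanding the squares on the right and using that right multiplication by each $s$ is a bijection of $\Gamma_N$. Applied to a unit eigenvector $f$ for $\lambda_n$ the left side equals $1+\lambda_n$, forcing the quantity $(f(v)+f(vs))^2$ to be small on average whenever $\lambda_n$ is close to $-1$.

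To exploit the odd cycle, fix $v\in\Gamma_N$ and set $u_j:=v\,s_{i_1}\cdots s_{i_j}$, so that $u_0=u_{2k+1}=v$. Writing $f(u_j)=-f(u_{j+1})+\bigl(f(u_j)+f(u_{j+1})\bigr)$ and iterating around the cycle produces the telescoping identity
\[
2f(v)=\sum_{j=0}^{2k}(-1)^j\bigl(f(u_j)+f(u_{j+1})\bigr),
\]
whose key feature is that $2k+1$ is odd, so the two boundary copies of $f(v)$ combine with the same sign. Cauchy--Schwarz gives $4f(v)^2\le(2k+1)\sum_{j=0}^{2k}(f(u_j)+f(u_{j+1}))^2$. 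Summing over $v\in\Gamma_N$, and noting that for fixed $j$ the map $v\mapsto u_j$ is a bijection of $\Gamma_N$ and $u_{j+1}=u_j\,s_{i_{j+1}}$, each of the $2k+1$ inner sums rewrites as $\sum_w(f(w)+f(w s_{i_{j+1}}))^2$, which is dominated by $\sum_{w,s}(f(w)+f(ws))^2=2|\Sigma|(1+\lambda_n)\|f\|^2$ because every summand is nonnegative. Combining gives $4\|f\|^2\le(2k+1)^2\cdot 2|\Sigma|(1+\lambda_n)\|f\|^2$, hence $\lambda_n\ge -1+\tfrac{2}{|\Sigma|(2k+1)^2}$.

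The argument is short and invokes no outside result; the only point to watch is the multi-set nature of $\Sigma$, which is harmless because each per-$s$ contribution to $\sum_{w,s}(f(w)+f(ws))^2$ is nonnegative, so domination by the full double sum is automatic.
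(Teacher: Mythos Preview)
Your proof is correct and takes a genuinely different route from the paper's. The paper argues via matrix powers: it observes that if $l$ is the odd girth then every diagonal entry of $A^l$ is at least $\alpha:=|\Sigma|^{-l}$ (the probability of traversing the fixed odd closed walk), writes $A^l=\alpha I+B$ with $B$ substochastic, and concludes that the smallest eigenvalue of $A^l$ is at least $2\alpha-1$, hence $\lambda_n\ge (2|\Sigma|^{-l}-1)^{1/l}>-1$. Your argument instead works directly with an eigenvector: the quadratic-form identity for $I+A$ together with the odd-cycle telescoping $2f(v)=\sum_j(-1)^j(f(u_j)+f(u_{j+1}))$ and Cauchy--Schwarz yields $\lambda_n\ge -1+\tfrac{2}{|\Sigma|(2k+1)^2}$.

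Both arguments use only that the odd relation in $\Gamma$ descends to every quotient $\Gamma_N$, so the bound is uniform in $N$ and depends only on $|\Sigma|$ and the odd girth. The payoff of your approach is quantitative: your lower bound is $-1+\Theta\bigl(\tfrac{1}{|\Sigma|\,l^2}\bigr)$, whereas the paper's bound is roughly $-1+\Theta\bigl(\tfrac{1}{l\,|\Sigma|^{l}}\bigr)$, exponentially worse in the odd girth. The paper's route is slightly more hands-off (no eigenvector manipulation, just positivity of diagonal entries), but your variational argument is both sharper and arguably more transparent about why an odd closed walk forces $1+\lambda_n$ to be bounded away from zero.
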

\begin{proof} Let $N$ be a finite index normal subgroup of $\Gamma$.
Let $X$ be the Cayley graph of $\Gamma_N$ with respect to
$\Sigma_N$. Let $l \in \N$ be the length of the shortest odd cycle
in $\Cay(\Gamma,\Sigma)$. The entries on the the diagonal of $A^l$
are positive and equal to each other. In fact, the value of the
entries is at least $\frac{1}{|\Sigma|^l}$ since it is equal to the
probability that a random walk on $X$ starting at some vertex return
to this vertex at the $l^{\text{th}}$-step. Hence, we can write
$A^l$ as $\alpha I+ B$ where $\alpha \ge\frac{1}{|\Sigma|^l}$, $I$
is the identity matrix and $B$ a real symmetric matrix with
non-negative entries such that the sum of the entries in every row
and column is $1-\alpha$. Thus, the smallest eigenvalue of $B$ is at
least $\alpha-1$ which implies that the smallest eigenvalue of $A^l$
is at least $2\alpha-1$. In turn, the smallest eigenvalue of $A$ is
at least $(2\alpha-1)^{\frac{1}{l}}\ge
(\frac{2}{|\Sigma|^l}-1)^{\frac{1}{l}}$ which is a real number
greater than $-1$ since $l$ is odd.
\end{proof}

It is well known that a connected $k$-regular graph is bi-partite if
and only if the smallest eigenvalue of the normalized adjacency
matrix of this graph is $-1$. Thus, Lemma \ref{eigenvalue} shows
that the condition that $\Cay(\Gamma,\Sigma)$ is not bi-partite does
not only imply that the graphs in $\{\mathrm{Cay}(\Gamma_N,\Sigma_N)
\mid N \in \mathcal{N}\}$ are not bi-partite but also that they are
`uniformly far' from being bi-partite.

A straightforward corollary of Lemmas \ref{expander lemma} and
\ref{eigenvalue} is:

\begin{corol}\label{property tau corol} Let $\Gamma$ be a finitely generated group
with an admissible generating multi-set  $\Sigma$. Let $\mathcal{N}$
be a family of finite index normal subgroups of $\Gamma$. Assume
that the family of Cayley graphs $\{\textrm{Cay}(\Gamma_N,\Sigma_N)
\mid N \in \mathcal{N} \}$ is an $\varepsilon$-expander for some
$\varepsilon>0$. For every $N \in \mathcal{N} $, let $\pi_N:\Gamma
\rightarrow \Gamma_N$ be the quotient homomorphism. Then there
exists $\delta>0$, depending only on $\varepsilon$, on the size of
$\Sigma$ and on the length of the shortest odd cycle in
$\Cay(\Gamma,\Sigma)$, such that for every $k\in \mathbb{N}$, every
$N \in \mathcal{N}$ and every $T \subseteq \Gamma_N$ the following
holds:
$$\left|\prob(\pi_N(w_k)\in T)-\frac{|{T}|}{|\Gamma_N|}\right|\le
\sqrt{|\Gamma_N|}e^{-\delta k}.$$
\end{corol}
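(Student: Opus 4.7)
The plan is to combine Lemmas \ref{expander lemma} and \ref{eigenvalue} by applying them to each quotient Cayley graph. First I would observe that for every $N \in \mathcal{N}$ the image $\pi_N(w_k)$ of the random walk on $\Cay(\Gamma, \Sigma)$ is itself distributed as a random walk on $\Cay(\Gamma_N, \Sigma_N)$: at each step the walker chooses a generator uniformly from the multi-set $\Sigma$, and the push-forward through $\pi_N$ is the uniform choice from the multi-set $\Sigma_N$. Hence it suffices to control the finite-group probabilities for the walk $w_k^{(N)}$ on $\Cay(\Gamma_N, \Sigma_N)$ and identify the result with the probability in the statement.

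Next I would examine the eigenvalues $1 = \lambda_1^{(N)} \ge \lambda_2^{(N)} \ge \cdots \ge \lambda_{n_N}^{(N)}$ of the normalized adjacency matrix of $\Cay(\Gamma_N, \Sigma_N)$. The $\varepsilon$-expander hypothesis gives $\lambda_2^{(N)} \le 1 - \varepsilon$, and Lemma \ref{eigenvalue}, applied to $\Gamma$ with the admissible generating multi-set $\Sigma$, produces a constant $c > -1$, depending only on $|\Sigma|$ and the length of the shortest odd cycle in $\Cay(\Gamma, \Sigma)$, with $\lambda_{n_N}^{(N)} > c$ for every $N$. Since both $\lambda_2^{(N)}$ and $\lambda_{n_N}^{(N)}$ lie in $(c, 1-\varepsilon]$, any $x$ in that interval satisfies $|x| \le \max(1-\varepsilon, -c) =: \beta$, and $\beta < 1$ because $\varepsilon > 0$ and $c > -1$. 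Thus $\alpha^{(N)} := \max(|\lambda_2^{(N)}|, |\lambda_{n_N}^{(N)}|) \le \beta$ uniformly in $N$.

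Feeding this bound into Lemma \ref{expander lemma} applied to the finite group $\Gamma_N$ then yields
$$\left| \prob(w_k^{(N)} \in T) - \frac{|T|}{|\Gamma_N|} \right| \le \sqrt{|\Gamma_N|}\, \beta^k$$
for every $T \subseteq \Gamma_N$ and every $k \in \N^+$; the case $k=0$ is trivial since the two quantities differ by at most $1 \le \sqrt{|\Gamma_N|}$. Taking $\delta := -\log \beta > 0$ and identifying $\prob(w_k^{(N)} \in T)$ with $\prob(\pi_N(w_k) \in T)$ completes the argument, and by construction $\delta$ depends only on $\varepsilon$, $|\Sigma|$, and the length of the shortest odd cycle in $\Cay(\Gamma, \Sigma)$, as required.

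The proof is mechanical rather than subtle; the only real point of care is to see that the spectral gap bound on $\lambda_2^{(N)}$ and the non-bipartiteness bound on $\lambda_{n_N}^{(N)}$ combine into a single constant $\beta < 1$ that is uniform in $N$. This is precisely the role played by the non-bipartite requirement that has been built into the definition of an admissible generating multi-set.
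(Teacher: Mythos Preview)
Your proof is correct and follows exactly the route the paper intends: the paper states only that the corollary is ``a straightforward corollary of Lemmas \ref{expander lemma} and \ref{eigenvalue}'', and your argument spells this out by bounding $\alpha^{(N)}=\max(|\lambda_2^{(N)}|,|\lambda_{n_N}^{(N)}|)$ uniformly via the expander hypothesis and Lemma~\ref{eigenvalue}, then invoking Lemma~\ref{expander lemma}. Your explicit identification of the pushed-forward walk with the walk on $\Cay(\Gamma_N,\Sigma_N)$ and the choice $\delta=-\log\beta$ with $\beta=\max(1-\varepsilon,-c)$ are precisely the missing details.
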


\subsection{Linear groups}\label{lin gr}

Most of the upcoming applications of property-$\tau$ will be related
to linear groups. We will mainly use the following result of
Salehi-Golsefidy and Varju  which in turn is built on the work Varju
\cite{Va} and Bourgain-Gumbord-Sarnak \cite{BGS1} and on the Product
Theorem of Breuillard-Green-Tao \cite{BGT1} and of Pyber-Szab\'{o}
\cite{PS} who followed and generalized Helfgott \cite{He}.

\begin{thm}[Salehi-Golsefidy-Varju \cite{SGV}]\label{SG-V} Fix $q_0 \in \N^+$ and let $\Gamma \subseteq
\GL_n(\Z[\frac{1}{q_0}])$ be a finitely generated subgroup such that
the connected component of its Zariski-closure in $\GL_n(\C)$ is
prefect. There exists $q_1\in \N^+$ divisible by $q_0$ such that
$\Gamma$ has property-$\tau$ with respect to the family
$$\{N_d \mid d \text{ is a square free positive integer
coprime to } q_1  \}$$ where $N_d$ is the kernel of the homomorphism
$\pi_d:\Gamma \rightarrow GL_n(\Z/d\Z)$ induced by the residue map
$\Z[\frac{1}{q_0}] \rightarrow \Z/d\Z$.
\end{thm}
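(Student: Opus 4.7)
The plan is to reduce the problem to establishing a uniform spectral gap for the Cayley graphs $\Cay(\pi_d(\Gamma),\pi_d(\Sigma))$ as $d$ ranges over square-free integers coprime to some $q_1$, and then to implement the Bourgain-Gamburd machinery at prime moduli together with a lifting argument from primes to square-free moduli.

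First I would handle the case $d=p$ a prime. Using Strong Approximation (Weisfeiler-Nori) together with the hypothesis that the connected component of the Zariski closure is perfect, one sees that for all primes $p$ outside a finite ``bad'' set $S$, the image $\pi_p(\Gamma)$ is of a very rigid form: modulo a bounded-index piece it is a product of (quasi)simple groups of Lie type of bounded rank over finite fields of characteristic $p$. Taking $q_1$ divisible by $q_0$ and by all primes in $S$, we may restrict attention to these structured finite quotients. For the prime case I would then run the Bourgain-Gamburd iteration on $G_p := \pi_p(\Gamma)$ with the push-forward measure $\mu_p$ of the uniform measure on $\Sigma$. The three ingredients are an $L^2$-flattening lemma (if $\|\mu_p^{*k}\|_2$ stops decaying super-polynomially in $|G_p|$ then $\mu_p^{*k}$ is close to the uniform measure on some approximate subgroup), a non-concentration estimate (the random walk on $\Gamma$ escapes proper algebraic subvarieties exponentially fast, so modulo $p$ it does not cluster in any proper subgroup of $G_p$), and the Product Theorem of Breuillard-Green-Tao and Pyber-Szab\'o (any generating approximate subgroup of a finite simple group of Lie type of bounded rank is either tiny or nearly the whole group). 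Combining these produces a spectral gap $\varepsilon_p$ for $\Cay(G_p,\pi_p(\Sigma))$, and one must verify that $\varepsilon_p$ admits a positive lower bound independent of $p$.

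For square-free $d = p_1 \cdots p_k$ with each $p_i \nmid q_1$, a Goursat-style analysis together with strong approximation shows that $\pi_d(\Gamma)$ is essentially the direct product of the $\pi_{p_i}(\Gamma)$'s (after identifying isomorphic factors). The spectral gap then lifts from the factors to the product through Varju's argument, which exploits quasi-randomness of finite simple groups of Lie type — the minimal dimension of a non-trivial complex irreducible representation of $G_p$ grows like a positive power of $|G_p|$ — combined with iterated $L^2$-flattening applied to the direct product and to the relevant diagonal subgroups that could trap mass during the walk.

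The hard part, I expect, will be the uniform non-concentration estimate: showing that for every proper subgroup $H$ of $G_p$, the probability that the $k$-step random walk on $\pi_p(\Gamma)$ lies in $H$ decays at a rate independent of $p$ and of $H$. This rests on a quantitative escape-from-subvarieties statement at the level of $\Gamma$, together with a Larsen-Pink type classification of proper subgroups of $G_p$, so that each such $H$ can be pulled back to a proper subvariety of the Zariski closure of bounded complexity. A secondary obstacle is quantifying the quasi-randomness sharply enough to make the promotion from prime to square-free moduli uniform in the number of prime factors, which is exactly the point where the perfect-connected-component hypothesis pays off.
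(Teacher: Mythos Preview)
The paper does not prove this theorem at all: it is quoted verbatim from Salehi-Golsefidy--Varju \cite{SGV} and used as a black box, so there is no ``paper's own proof'' to compare against. Your sketch is in fact a reasonable high-level outline of the argument in \cite{SGV} itself (Bourgain--Gamburd $L^2$-flattening plus the Product Theorem at prime level, followed by Varju's lifting to square-free moduli via quasi-randomness), but none of this appears in the present paper. If your task was to reproduce the paper's treatment of this statement, the correct answer is simply to cite \cite{SGV}; if instead you intended to supply the missing proof, be aware that what you have written is a plausible roadmap rather than a proof, and the genuinely hard steps (uniform non-concentration via Larsen--Pink type bounds, and the uniformity of the spectral gap in the number of prime factors) each require substantial work that your outline only names.
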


In practice, it is very helpful to know what is the image
$\pi_d(\Gamma)$. Define $$I_\Gamma:=\{f\in \Z[t_{i,j}]_{1 \le i.j
\le n} \mid \forall g \in \Gamma.\ f(g)=0\}.$$ The Zariski-closure
$\G(\C)$ of $\Gamma$ in $\GL_n(\C)$ equals $\{g \in \GL_n(\C)\mid
\forall f\in I_\Gamma.\ f(g)=0\}$. Let $d \in \N^+$ and define
$$\G(\Z/d\Z):=\{g \in \GL_n(\Z/d\Z) \mid \forall f \in I_\Gamma.\
\bar{f}(g)=0\}$$ where $\bar{f}$ is the image of $f$ under the
modulo-$d$ homomorphism. Clearly, $\pi_d(\Gamma)$is contained in
$\G(\Z/d\Z)$. The following Strong Approximation Theorem of
Wiesfeiler and Nori gives us sufficient conditions for equality to
hold.

\begin{thm}[Wiesfeiler \cite{We}, Nori \cite{No}]\label{Strong} Let $q_0\in \N^+$. Let $\Gamma$ be a subgroup of $\GL_n(\Z[\frac{1}{q_0}])$
such that the  Zariski-closure of it $\ag(\C)$ is semisimple,
connected and simply connected.  There is a number $q_2 \in \N^+$
divisible by $q_0$ such that $\pi_d(\Gamma)=\ag(\Z/d\Z)$ for every
$d \in \N^+$ coprime to $q_2$.
\end{thm}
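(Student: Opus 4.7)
The plan is to combine a ``Chinese Remainder plus Goursat'' reduction with Nori's structure theorem for subgroups of $\GL_n(\F_p)$ generated by elements of order $p$. First I would choose an integral model: after enlarging $q_0$ to some $q_0'$, the Zariski-closure $\ag$ of $\Gamma$ spreads out to a smooth affine group scheme $\underline{\ag}$ over $\Z[\tfrac{1}{q_0'}]$ whose geometric fibers are connected, semisimple and simply connected, and for every $d$ coprime to $q_0'$ the reduction map $\pi_d$ factors through $\underline{\ag}(\Z/d\Z)$. The proof then splits into: (a) surjectivity $\pi_p(\Gamma) = \underline{\ag}(\F_p)$ for almost all primes $p$; (b) Hensel lifting from $\F_p$ to $\Z/p^k\Z$; and (c) assembly via Goursat and Chinese Remainder for composite $d$.

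The heart is step (a). Set $H_p := \pi_p(\Gamma) \subseteq \underline{\ag}(\F_p)$. A spreading-out argument, using that $\Gamma$ is Zariski-dense in $\ag$ over $\Q$, shows that after excluding finitely more primes the Zariski-closure of $H_p$ inside the fiber $\underline{\ag}_{\F_p}$ is the full fiber. Nori's theorem then says that for $p$ large (depending only on $n$), the subgroup $H_p^+$ generated by the $p$-elements of $H_p$ coincides with $\underline{H_p}(\F_p)^+$ for a unique connected algebraic subgroup $\underline{H_p} \subseteq \underline{\ag}_{\F_p}$; Zariski-density then forces $\underline{H_p} = \underline{\ag}_{\F_p}$. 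Combined with the Steinberg--Tits fact that $\underline{\ag}(\F_p) = \underline{\ag}(\F_p)^+$ (valid for $\underline{\ag}$ semisimple simply connected and $p$ large), this gives $H_p = \underline{\ag}(\F_p)$. Step (b) is a standard Hensel lift exploiting the smoothness of $\underline{\ag}$; step (c) uses Goursat, which reduces to knowing that the simple quotients $\underline{\ag}(\F_{p_i})/Z(\underline{\ag}(\F_{p_i}))$ have pairwise distinct orders for $p_i$ large, so that no nontrivial gluing between factors is possible, up to a bounded central correction that one absorbs into $q_2$.

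The main obstacle is step (a), and within it Nori's theorem itself, which is a substantial result requiring a uniform ``exponential--logarithm'' procedure on unipotent subgroups that recovers an algebraic subgroup of bounded complexity from the $p$-torsion of $H_p$. The other delicate point is to make all ``spreading out'' bounds effective: one must explicitly bound the excluded primes so that a single $q_2$ works, and simultaneously enforce by inverting only finitely many primes all the required ingredients (Zariski-density of $H_p$, the Nori lower bound on $p$, the equality $\underline{\ag}(\F_p)^+ = \underline{\ag}(\F_p)$, distinct orders of the simple quotients across different primes, and smoothness of the integral model).
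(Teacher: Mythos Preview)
The paper does not prove this theorem; it is quoted as an external result of Weisfeiler \cite{We} and Nori \cite{No} and used as a black box. There is therefore no ``paper's own proof'' to compare against.

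That said, your outline is a faithful sketch of the Nori route to strong approximation and is essentially correct. Two small remarks. First, your step (b) is not literally a Hensel/smoothness argument: smoothness only tells you that $\underline{\ag}(\Z/p^{k+1}\Z)\to\underline{\ag}(\Z/p^{k}\Z)$ is surjective, not that the image of $\Gamma$ lifts. The standard way to promote surjectivity onto $\underline{\ag}(\F_p)$ to surjectivity onto $\underline{\ag}(\Z/p^{k}\Z)$ is to analyse the successive kernels $\ker\big(\underline{\ag}(\Z/p^{j+1}\Z)\to\underline{\ag}(\Z/p^{j}\Z)\big)\cong\mathfrak{g}(\F_p)$ as $\underline{\ag}(\F_p)$-modules under the adjoint action and use that, for $p$ large, $\mathfrak{g}(\F_p)$ has no proper $\underline{\ag}(\F_p)$-submodules compatible with a splitting; this is the Frattini-type argument one actually finds in Weisfeiler and in Matthews--Vaserstein--Weisfeiler. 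Second, in step (c) the ``bounded central correction'' is unnecessary here: since $\underline{\ag}$ is simply connected, for $p$ large one has $\underline{\ag}(\F_p)^{+}=\underline{\ag}(\F_p)$ on the nose (Steinberg), so Nori already gives the full group of $\F_p$-points and no central defect needs to be absorbed into $q_2$. With these adjustments your plan is the standard proof.
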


A finitely generated subgroup of $\GL_n(\Q)$ is contained in
$\GL_n(\Z[\frac{1}{q_0}])$ for some $q_0\in \N^+$. Thus, Corollary
\ref{property tau corol} together with Theorems \ref{SG-V} and
\ref{Strong} imply:

\begin{corol}\label{tau corol} Let $\Gamma$ be a finitely generated Zariski-dense subgroup of
$SL_n(\Q)$ and let $\Sigma$ be an admissible generating
multi-subset. There exist $q_3 \in \N^+$ and $\delta>0$ such that
for every square free $d \le e^{\delta k}$ coprime to $q_3$ and
every $T \subseteq \SL_n(\Z/dZ)$, the probability
$\prob(\pi_d(w_k)\in T)$ is approximately
$\frac{|T|}{|\SL_n(\Z/d\Z)|}$ and the error term is smaller than
$e^{-\delta k}$ (so it decays exponentially fast with $k$).
\end{corol}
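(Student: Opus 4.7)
The plan is to assemble the three tools just developed in this section. Because $\Gamma$ is Zariski-dense in $\SL_n(\C)$, its Zariski-closure is semisimple, connected, simply connected, and in particular perfect, so the hypotheses of both Theorems \ref{SG-V} and \ref{Strong} are satisfied. Since $\Gamma$ is finitely generated, it lies in $\GL_n(\Z[\tfrac{1}{q_0}])$ for some $q_0 \in \N^+$, which we fix once and for all.

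First I would obtain integers $q_1$ and $q_2$ from Theorems \ref{SG-V} and \ref{Strong} respectively, and set $q_3 := \mathrm{lcm}(q_1,q_2)$. Then for every squarefree $d$ coprime to $q_3$, the quotient $\pi_d(\Gamma)$ coincides with $\SL_n(\Z/d\Z)$, and the family of all such $N_d$ has property-$\tau$ in $\Gamma$. Applying Corollary \ref{property tau corol} to this family (using that $\Sigma$ is admissible, which rules out bi-partiteness in any finite quotient) yields a constant $\delta_0 > 0$, depending only on $\Sigma$, such that for every such $d$, every $k \in \N$, and every $T \subseteq \SL_n(\Z/d\Z)$,
$$\left|\prob(\pi_d(w_k) \in T) - \frac{|T|}{|\SL_n(\Z/d\Z)|}\right| \le \sqrt{|\SL_n(\Z/d\Z)|}\, e^{-\delta_0 k}.$$

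It remains to absorb the factor $\sqrt{|\SL_n(\Z/d\Z)|}$ into the exponential decay, under the restriction that $d$ is not too large. Since $|\SL_n(\Z/d\Z)| \le d^{n^2}$, I would take $\delta := \delta_0/(n^2 + 1)$. Then for every squarefree $d \le e^{\delta k}$ coprime to $q_3$,
$$\sqrt{|\SL_n(\Z/d\Z)|}\, e^{-\delta_0 k} \le d^{n^2/2}\, e^{-\delta_0 k} \le e^{\left(\tfrac{n^2}{2}\delta - \delta_0\right) k} \le e^{-\delta k},$$
which is exactly the bound claimed by the corollary.

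The proof is essentially a packaging of already-stated results, so there is no conceptual obstacle. The only subtlety is the quantitative balancing in the last display: the single parameter $\delta$ plays two roles at once (bounding the range of admissible moduli and controlling the decay rate of the error), so it must be chosen small enough that $d^{n^2/2}$, allowed to grow as fast as $e^{(n^2/2)\delta k}$, is absorbed by $e^{\delta_0 k}$ with an additional factor $e^{-\delta k}$ to spare.
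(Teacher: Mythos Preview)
Your proof is correct and follows exactly the route the paper indicates: it combines Theorems~\ref{SG-V} and~\ref{Strong} with Corollary~\ref{property tau corol}, then absorbs the factor $\sqrt{|\SL_n(\Z/d\Z)|}\le d^{n^2/2}$ by shrinking $\delta$ appropriately. The paper gives only the one-line justification ``Corollary~\ref{property tau corol} together with Theorems~\ref{SG-V} and~\ref{Strong} imply,'' and your argument is precisely the intended unpacking of that sentence.
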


The following proposition, which gives an example of a family
consisting of exponentially small subsets, can be easily deduced
from Proposition 3.2 of \cite{BGS1}.

\begin{prop}[Bourgain-Gamburd-Sarnak,  \cite{BGS1}]\label{subvarity}  Let $\Gamma$ be a finitely generated Zariski-dense
subgroup of $\SL_n(\Q)$. Let $V$ be a proper subvariety of
$\SL_n(\C)$ defined over $\Q$. Then, $V(\C)\cap \Gamma$ is
exponentially small.
\end{prop}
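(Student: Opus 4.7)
The plan is to leverage Corollary \ref{tau corol}: if, for a suitable prime $p$ depending on $k$, the image $\pi_p(Z)$ of $Z := V(\C)\cap\Gamma$ occupies only a small fraction of $\SL_n(\F_p)$, then the approximate equidistribution of random walks modulo $p$ will force $\prob(w_k\in Z)$ to be exponentially small. I fix an admissible generating multi-subset $\Sigma$ of $\Gamma$ and let $q_3$ and $\delta$ be the constants furnished by Corollary \ref{tau corol}.

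First I would verify that, for all but finitely many primes $p$, the reduction modulo $p$ of the defining ideal of $V$ cuts out a proper closed subvariety $\bar V \subseteq \SL_n(\F_p)$ of dimension at most $n^2-2$ (this is a standard flatness/generic fibre argument; one only has to exclude primes dividing some fixed integer). A standard point count (or the Lang--Weil estimate) then yields a constant $C=C(V)$ such that $|\bar V(\F_p)| \le C p^{\,n^2-2}$ for every such $p$. Combined with $|\SL_n(\F_p)|\ge \tfrac{1}{2} p^{\,n^2-1}$ and the inclusion $\pi_p(Z)\subseteq \bar V(\F_p)$, this would give
$$\frac{|\pi_p(Z)|}{|\SL_n(\F_p)|} \le \frac{C'}{p}$$
for some $C'=C'(V,n)$ and every sufficiently large prime $p$ coprime to $q_3$.

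Next I would fix $0<\alpha<\delta$ and, for each large $k$, apply Bertrand's postulate to select a prime $p_k\in [\tfrac{1}{2}e^{\alpha k}, e^{\alpha k}]$; once $k$ is large, any such prime automatically avoids the finitely many excluded primes and the divisors of $q_3$. Since $p_k\le e^{\delta k}$, Corollary \ref{tau corol} applies with $d=p_k$ and $T=\pi_{p_k}(Z)$, producing
$$\prob(w_k\in Z) \,\le\, \prob\bigl(\pi_{p_k}(w_k)\in \pi_{p_k}(Z)\bigr) \,\le\, \frac{|\pi_{p_k}(Z)|}{|\SL_n(\F_{p_k})|} + e^{-\delta k} \,\le\, \frac{2C'}{e^{\alpha k}} + e^{-\delta k}.$$
Both terms decay exponentially in $k$, so $Z$ would be exponentially small with respect to $\Sigma$; since $\Sigma$ was an arbitrary admissible generating multi-subset, $Z$ would be exponentially small.

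The real technical content is the bound $|\bar V(\F_p)|/|\SL_n(\F_p)| = O(1/p)$, but this is fairly routine. The only genuine subtlety in the plan is balancing the sieve main term $C'/p$ against the equidistribution error $e^{-\delta k}$: taking $p$ too small lets $C'/p$ dominate without exponential decay, while $p>e^{\delta k}$ falls outside the range where Corollary \ref{tau corol} applies. Choosing $p\asymp e^{\alpha k}$ with any $\alpha<\delta$ satisfies both constraints simultaneously and delivers the required bound.
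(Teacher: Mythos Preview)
Your proposal is correct and follows essentially the same route as the paper: bound $|V(\F_p)|$ via Lang--Weil, choose a prime $p$ of size exponential in $k$, and apply Corollary \ref{tau corol} to control both the main term $C'/p$ and the equidistribution error $e^{-\delta k}$. The only cosmetic difference is that the paper takes $p\in[\tfrac12 e^{\delta k}, e^{\delta k}]$ directly (so both terms are $O(e^{-\delta k})$), whereas you introduce an auxiliary $\alpha<\delta$; either choice works.
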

\begin{proof} Fix an admissible generating  multi-subset $\Sigma$ and let $\delta$ be
as in Corollary \ref{tau corol}. Assume that $k$ is large enough and
pick a prime between $\frac{1}{2}e^{\delta k}$ and $e^{\delta k}$
coprime to $q_3$ for which $V(\Z/pZ)$ is defined. The dimension of
$V$ is at most $n^2-2$ so by the Lang-Weil estimates \cite{LW},
$|V(\Z/p\Z)| \le (c+1) p^{n^2-2}$ where $c$ is the number of
irreducible components of maximal dimension of $V$. Now,
$|\SL_n(\Z/p\Z)| \ge \frac{1}{2}p^{n^2}$ so
$\frac{|V(\Z/p\Z)|}{|\SL_n(\Z/q\Z)|} \le \frac{2c+2}{p}$. Corollary
\ref{tau corol} shows that the probability $\prob(\pi_p(w_k)\in
V(\Z/p\Z))$ is at most $(4c+5)e^{-\delta k}$.
\end{proof}
\begin{dfn}
An element $g \in \SL_n(\Z)$ is called unipotent if all is
eigenvalues are equal to 1. An element $g \in \SL_n(\Z)$ is called
virtually unipotent if for some $m \ge 1$ the element $g^m$ is
unipotent.
\end{dfn}

A straightforward Corollary of Proposition \ref{subvarity} is:

\begin{corol}\label{unipotent}  Let $\Gamma$ be a Zariski-dense
subgroup of $\SL_n(\Q)$. Then, the set of virtually unipotent
elements is exponentially small.
\end{corol}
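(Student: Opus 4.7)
The plan is to reduce Corollary \ref{unipotent} to Proposition \ref{subvarity} by showing that the set of virtually unipotent elements of $\Gamma$ is contained in $V(\C) \cap \Gamma$ for a single proper subvariety $V \subseteq \SL_n(\C)$ defined over $\Q$. Once this containment is in hand, Proposition \ref{subvarity} delivers exponential smallness of $V(\C) \cap \Gamma$, and the smallness passes to any subset since $\prob(w_k \in Z') \le \prob(w_k \in Z)$ whenever $Z' \subseteq Z$.

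The key arithmetic input for producing $V$ is the observation that $\text{tr}(g)$ is a rational integer of absolute value at most $n$ for every virtually unipotent $g \in \Gamma$. Indeed, if $g^m$ is unipotent then each eigenvalue $\lambda_i$ of $g$ satisfies $\lambda_i^m = 1$, so $\lambda_i$ is a root of unity. Hence $\text{tr}(g) = \sum_{i=1}^n \lambda_i$ is a sum of $n$ algebraic integers of modulus $1$, so it is itself an algebraic integer of complex absolute value at most $n$. But $g$ has rational entries, so $\text{tr}(g) \in \Q$, and a rational algebraic integer lies in $\Z$.

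Consequently every virtually unipotent $g \in \Gamma$ satisfies the polynomial identity $\prod_{c=-n}^{n}\bigl(\text{tr}(g) - c\bigr) = 0$, so it belongs to the subvariety $V \subseteq \SL_n(\C)$ cut out by this polynomial. Since the trace is a polynomial with integer coefficients in the matrix entries, $V$ is defined over $\Z$, and in particular over $\Q$. It is proper because, for $n \ge 2$, the trace attains infinitely many complex values on $\SL_n(\C)$ (e.g.\ on the one-parameter family $\text{diag}(t,t^{-1},1,\ldots,1)$). Applying Proposition \ref{subvarity} to this $V$ completes the argument.

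There is no genuine obstacle here: the corollary follows almost immediately from Proposition \ref{subvarity}, the only ingredient being the classical fact that rational algebraic integers are ordinary integers, applied to the trace of a virtually unipotent matrix in $\SL_n(\Q)$.
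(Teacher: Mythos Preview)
Your proof is correct and follows the same overall strategy as the paper: both arguments reduce to Proposition~\ref{subvarity} by exhibiting a single proper $\Q$-subvariety of $\SL_n$ containing every virtually unipotent element of $\Gamma$. The difference lies only in the choice of subvariety. The paper observes that there are only finitely many roots of unity of degree at most $n$ over $\Q$, so there is a uniform exponent $m=m(n)$ with $g^m$ unipotent for every virtually unipotent $g\in\SL_n(\Q)$; the relevant variety is then $\{g:(g^m-I)^n=0\}$. You instead bound the trace: since the eigenvalues are roots of unity and the trace is rational, $\operatorname{tr}(g)$ is a rational algebraic integer of modulus at most $n$, hence lies in $\{-n,\dots,n\}$, and the variety is cut out by $\prod_{c=-n}^n(\operatorname{tr}(g)-c)$.

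Your route is marginally more elementary, needing only that $\Q\cap\overline{\Z}=\Z$ rather than the (also elementary) finiteness of low-degree roots of unity. The paper's route has the side benefit of establishing the uniform-exponent fact itself, which is reused later in the paper (see the proof of Corollary~\ref{234454} in the general case). Either argument is perfectly adequate here.
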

\begin{proof} An element $g \in \SL_n(\Q)$ is virtually unipotent if
and only if all its eigenvalues are roots of unity. There are only
finitely many roots of unity which are roots of a monic polynomial
of degree at most $n$ over $\Q$. Hence, there is a constant $m \ge
2$, depending only on $n$, such that if $g \in \SL_n(\Q)$ is
virtually unipotent then $g^m$ is unipotent. The set of elements of
$\SL_n(\C)$ whose $m^{\text{th}}$-power is unipotent is a proper
subvariety defined over $\Z$. Proposition \ref{subvarity} completes
the proof.
\end{proof}

\section{The Large Sieve} \label{chap sieve}

In the previous section we proved that the set $U$ consisting of
virtually-unipotent elements of $\Gamma$ is exponentially small
where $\Gamma$ is a Zariski-dense subgroup of $\SL_n(\Z)$. For every
large enough $k\in \N^+$ we picked a large prime $p_k$ with two
properties. The first is that the set $\pi_{p_k}(U)$ is
exponentially small in $\pi_{p_k}(\Gamma)$, i.e., the ratio between
the size of $\pi_{p_k}(\Gamma)$ and the size of $\pi_{p_k}(U)$ grows
exponentially with $k$. The second is that the image of the
$k^{\text{th}}$-step of a random walk is `almost uniformly
distributed' in $\pi_p(\Gamma)$.

There are some exponentially small sets for which an argument of
that kind does not work. For example, it does not work for the set
of proper powers or even for the set of $m$-powers for some fixed
$m$. The reason is that there is a fixed positive proportion
$\alpha>0$ such that for every prime $p$ the proportion of the set
of $m$-powers in $\pi_p(\Gamma)$ is at least $\alpha$. In order to
overcome this problem one may look at the image of $\Gamma$ under
the modulo-$d$ homomorphism where $d$ is a product of a linear
number (as a function of $k$) of primes. This rises a new problem,
the image of the $k^{\text{th}}$-step of a random walk does not
`almost uniformly distributed' in $\pi_d(\Gamma)$.

The large sieve method provides a way to deal with this situation.
It implies that in order to show that a set $Z \subseteq \Gamma$ is
exponentially small it is enough to find a constant $c>0$ and
exponentially many (as a function of $k$) primes $p$ for which the
following three conditions hold. The first is that
$\frac{|\pi_p(Z)|}{|\pi_p(\Gamma)|} \le 1-c$ for every such a prime
$p$. The second is that the $k^{\text{th}}$-step of a random walk is
`almost uniformly distributed' in $\pi_p(\Gamma)$ for every such a
prime $p$. The third is that for every two distinct such primes $p$
and $q$ the images in $\pi_p(\Gamma)$ and $\pi_q(\Gamma)$ of the
$k^{\text{th}}$-step of a random walk are `almost independent'.
\\

\subsection{The Large Sieve Theorem}

We start this section by stating a lemma which has nothing to do
with property-$\tau$ nor with groups. Property-$\tau$ will come into
the picture once we try to use this proposition in the context of
group theory. We are thankful to Ron Peled who simplified the proof
of the next lemma by suggesting the use of Chebyshev's inequality.

\begin{lemma}\label{prob} Let $U$ be a probability space. Let
$(A_i)_{1 \le i \le L}$ be a series of events. For $1 \le i,j \le L$
denote:
$$W(i,j):=
{\proba(A_i\cap A_j)-\proba(A_i)\proba(A_j)},$$
$$\triangle:=\max_{1 \le i \ne j \le L} |W(i,j)|,$$
and
$$M:=\sum_{i=1}^L{\proba(A_i)}.$$
Then:
$$\proba(U\setminus\bigcup_{1\le i \le L}A_i)\le\frac{L+L^2\triangle }{M^2}.$$
\end{lemma}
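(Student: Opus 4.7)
The plan is to use a second moment argument via Chebyshev's inequality on the counting random variable
\[
X \;=\; \sum_{i=1}^{L} \mathbf{1}_{A_i}.
\]
The key observation is that the event whose probability we wish to bound is exactly $\{X = 0\}$, since $U \setminus \bigcup_i A_i$ is the set of outcomes that lie in none of the $A_i$. By linearity of expectation $\E[X] = \sum_i \proba(A_i) = M$, so the deviation $X = 0$ forces $|X - \E X| \ge M$, and Chebyshev gives
\[
\proba\bigl(U \setminus \bigcup_{1 \le i \le L} A_i\bigr) \;=\; \proba(X = 0) \;\le\; \proba(|X - M| \ge M) \;\le\; \frac{\V(X)}{M^2}.
\]

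The remaining task is therefore to bound $\V(X)$ by $L + L^2 \triangle$. First I would expand
\[
\V(X) \;=\; \E[X^2] - M^2 \;=\; \sum_{i,j=1}^{L}\bigl(\proba(A_i \cap A_j) - \proba(A_i)\proba(A_j)\bigr) \;=\; \sum_{i,j=1}^L W(i,j),
\]
then split the sum into its diagonal and off-diagonal parts. On the diagonal, $W(i,i) = \proba(A_i) - \proba(A_i)^2 = \proba(A_i)(1 - \proba(A_i)) \le 1$, contributing at most $L$ in total. Off the diagonal, each of the $L(L-1)$ terms is at most $\triangle$ in absolute value, contributing at most $L(L-1)\triangle \le L^2 \triangle$. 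Combining the two bounds gives $\V(X) \le L + L^2 \triangle$, and substituting into the Chebyshev estimate yields the claim.

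There is no real obstacle here; the only ``move'' is recognizing that $\{X=0\}$ is exactly the complement of the union and that Chebyshev's inequality converts a variance bound into the desired tail estimate. This is precisely the simplification attributed to Ron Peled in the acknowledgment of the lemma, replacing what would otherwise be a more ad hoc inclusion--exclusion style computation.
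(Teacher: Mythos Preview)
Your proof is correct and essentially identical to the paper's own argument: define $X=\sum_i \mathbf{1}_{A_i}$, apply Chebyshev at the level $C=\E X=M$ to get $\proba(X=0)\le \V(X)/M^2$, then bound $\V(X)=\sum_{i,j}W(i,j)$ by splitting into the $L$ diagonal terms (each $\le 1$) and the off-diagonal terms (each $\le \triangle$). Your remark about the Peled attribution is also on point.
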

\begin{proof} Chebyshev's inequality says that if $X$ is a random variable then for $C \ge
0$:
$$\proba(|X-\E(X)|)\ge C) \le \frac{\V(X)}{C^2}.$$
In particular, if $C=\E(X)$, then:
$$\proba(X=0)\le \proba(|X-\E(X)|\ge \E(X)) \le
\frac{\V(X)}{\E(X)^2}.$$ Define $X_i$ to be the indicator function
of $A_i$ and set $X:=\sum_{1 \le i \le L}X_i$. Then, $M=\E(X)$ while
$$\V(X):=\E((X-\E(X))^2)=\sum_{1 \le i , j \le
L}E(X_iX_j-2X_i\E(X_j)+E(X_i)E(X_j))=$$
$$\sum_{1 \le i , j \le
L}(E(X_iX_j)-E(X_i)E(X_j))= \sum_{1 \le i , j \le L} W(i,j) \le
L+L^2\triangle .$$ Thus,
$$\proba(U\setminus\bigcup_{0\le i \le L}A_i)=\proba(X=0) \le\frac{L+L^2\triangle}{M^2}.$$
\end{proof}

\begin{thm}\label{sieve theorem} Fix $s \ge 2$. Let $\Gamma$ be a finitely generated group and let
$\Sigma\subset \Gamma$ be an admissible multi-set. Let $(N_i)_{i \ge
2}$ be a series of finite index normal subgroups of $\Gamma$. For
$i,j \ge 2$ denote $N_{i,j}:=N_i\cap N_j$,
$\Gamma_{i,j}:=\Gamma/N_{i,j}$ and let $\pi_{i,j}:\Gamma\rightarrow
\Gamma_{i,j}$ be the quotient homomorphism. Let $Z\subseteq \Gamma$
and assume that there are positive constants $\delta$, $c$ and $d$
and a sequence $(T_i)_{i \ge 2}$ such that for every $i,j \ge s$ the
following conditions hold:
\begin{itemize}
\item[0.]  $T_i \subseteq \Gamma_{i,i} \setminus
\pi_{i,i}(Z)$.
\item[1.] For every
$T \subseteq \Gamma_{i,j}$, $\left|\prob(\pi_{i,j}(w_k)\in
T)-\frac{|{T}|}{|\Gamma_{i,j}|}\right|\le
\sqrt{|\Gamma_{i,j}|}e^{-\delta k} .$
\item[2.]  $|\Gamma_{i,i}| \le i^d$ .
\item[3.] If $i$ and $j$ are distinct then
$\frac{|\{gN_{i,j} \mid gN_i \in T_i \ \wedge \ gN_j \in T_j
\}|}{|\Gamma_{i,j}|} =
\frac{|T_i||T_j|}{|\Gamma_{i,i}||\Gamma_{j,j}|}$.
\item[4.] $\frac{|T_i|}{|\Gamma_{i,i}|} \ge c$.
\end{itemize}
Then for every $k \ge \frac{d+1}{\delta}\log(2s)$, $\prob(w_k \in Z)
\le \frac{20}{c^2}e^{-\frac{\delta k}{d+1}}.$ In particular, $Z$ is
exponentially small w.r.t $\Sigma$.
\end{thm}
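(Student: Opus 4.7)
The plan is the classical large sieve argument driven by Lemma \ref{prob}. Set $L := \lfloor e^{\delta k/(d+1)} \rfloor$ and $I := \{s, s+1, \dots, s+L-1\}$; the hypothesis $k \ge \frac{d+1}{\delta}\log(2s)$ ensures $L \ge 2s$, so every $i \in I$ satisfies $i \le 2L$. For each $i \in I$ introduce the event
$$A_i := \{\omega \in \walks(\Sigma) : \pi_{i,i}(w_k(\omega)) \in T_i\}.$$
By condition 0 the sets $T_i$ and $\pi_{i,i}(Z)$ are disjoint, so $\{w_k \in Z\}$ is contained in the complement of $\bigcup_{i \in I} A_i$. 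Lemma \ref{prob} therefore yields
$$\prob(w_k \in Z) \le \frac{L + L^2 \triangle}{M^2}, \qquad M := \sum_{i \in I} \prob(A_i), \qquad \triangle := \max_{i \ne j,\ i,j \in I} |W(i,j)|,$$
and it remains to bound $M$ from below and $\triangle$ from above.

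For $M$, apply condition 1 with $j=i$ and the set $T_i$ to obtain $\prob(A_i) \ge \frac{|T_i|}{|\Gamma_{i,i}|} - \sqrt{|\Gamma_{i,i}|}\, e^{-\delta k}$, and then invoke conditions 2 and 4 to get $\prob(A_i) \ge c - (2L)^{d/2} e^{-\delta k}$. The choice of $L$ together with the lower bound on $k$ forces $(2L)^{d/2} e^{-\delta k} \le c/2$, hence $M \ge cL/2$.

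For $\triangle$, let $S_{i,j} := \{gN_{i,j} \in \Gamma_{i,j} : gN_i \in T_i \text{ and } gN_j \in T_j\}$, so that $A_i \cap A_j = \{\pi_{i,j}(w_k) \in S_{i,j}\}$. Condition 1 applied on $\Gamma_{i,j}$ gives
$$\left|\prob(A_i \cap A_j) - \tfrac{|S_{i,j}|}{|\Gamma_{i,j}|}\right| \le \sqrt{|\Gamma_{i,j}|}\, e^{-\delta k},$$
while condition 3 identifies $\tfrac{|S_{i,j}|}{|\Gamma_{i,j}|} = \tfrac{|T_i|}{|\Gamma_{i,i}|}\tfrac{|T_j|}{|\Gamma_{j,j}|}$. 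Expanding $W(i,j) = \prob(A_i \cap A_j) - \prob(A_i)\prob(A_j)$ and using both forms of the error supplied by condition 1, together with the diagonal injection $\Gamma_{i,j} \hookrightarrow \Gamma_{i,i} \times \Gamma_{j,j}$ and condition 2 to bound $\sqrt{|\Gamma_{i,j}|}\le (2L)^d$, one obtains $\triangle \le C L^d e^{-\delta k}$ for an explicit constant $C$.

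Combining these estimates with Lemma \ref{prob} gives
$$\prob(w_k \in Z) \le \frac{L + L^2 \cdot CL^d e^{-\delta k}}{(cL/2)^2} = \frac{4}{c^2 L} + \frac{4C L^d e^{-\delta k}}{c^2}.$$
The choice $L \sim e^{\delta k/(d+1)}$ is precisely the one that balances the two summands, since it makes $L^d e^{-\delta k} = 1/L$; each term then becomes a universal constant times $c^{-2} e^{-\delta k/(d+1)}$, and the final constant can be absorbed into $20/c^2$ after routine bookkeeping. The only real work is this bookkeeping—tracking the several subdominant error terms appearing in $W(i,j)$ carefully enough and verifying that the hypothesis on $k$ really does make the crude estimates on $M$ and $\triangle$ valid. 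There is no conceptual obstacle: the entire argument reduces to Chebyshev's inequality, packaged as Lemma \ref{prob}, combined with the four hypothesized equidistribution/quasi-independence properties of the images of the walk in the quotients $\Gamma_{i,j}$.
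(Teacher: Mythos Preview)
Your argument is correct and mirrors the paper's proof: both define the events $A_i$, invoke Lemma~\ref{prob}, bound $M$ from below via condition~4 and $\triangle$ from above via conditions~1--3, and take $L\approx e^{\delta k/(d+1)}$ so that $L^d e^{-\delta k}=L^{-1}$ balances the two terms. The only discrepancy is the exact constant: your index set $\{s,\dots,s+L-1\}$ forces $i\le 2L$ and hence an extra factor $2^d$ in the bound for $\triangle$, so the ``routine bookkeeping'' will not in fact yield the universal $20$; the paper instead uses $I=\{i:s\le i\le L\}$, so that $\max(i,j)^d\le L^d$ exactly and one gets $\Delta\le 4L^{-1}$, $M\ge cL/2$, and $\tfrac{L+L^2\cdot 4L^{-1}}{(cL/2)^2}=\tfrac{20}{c^2L}$ on the nose.
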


\begin{proof} For every $i \in I$ denote $\Gamma_i:=\Gamma_{i,i}$ and $\pi_i:=\pi_{i,i}$.
Conditions 1 and 3 imply that for every distinct $i,j\in I$ and
$k\in\mathbb{N}^+$:
\begin{equation}\label{eq31}{} \left|\prob(\pi_i(w_k) \in T_i)-\frac{|T_i|}{|\Gamma_i|}\right|\le
\sqrt{|\Gamma_i|}e^{-\delta k}
\end{equation} and
\begin{equation}\label{eq32} \left|\prob(\pi_{i}(w_k)\in T_i \ \wedge \
\pi_{j}(w_k)\in
T_j)-\frac{|T_i|}{|\Gamma_i|}\frac{|T_j|}{|\Gamma_j|}\right|\le
\sqrt{|\Gamma_i||\Gamma_j|}e^{-\delta k}.
\end{equation}

Recall that the set of walks $\walks(\Sigma)$  is a probability
space. For $i\ge s$ and $k \ge 1$, denote $A_{i,k}:=\{w\in
\walks(\Sigma)\mid\pi_i(w_k)\in T_i \}$. Note that if $w$ is a walk
and $w_k\in Z$, then $w\not \in \cup_{i \ge s} A_{i,k}$. We can
rewrite equations (\ref{eq31}) and (\ref{eq32}) in the form:
\begin{equation}\label{eq51}\left|{\proba}(A_{i,k})-\frac{|T_i|}{|\Gamma_i|}\right|\le
\sqrt{|\Gamma_i|}e^{-\delta k}\end{equation} and
\begin{equation}\label{eq52}\left|{\proba}(A_{i,k}\cap A_{j,k})-\frac{|T_i|}{|\Gamma_i|}\frac{|T_j|}{|\Gamma_j|}\right|\le
\sqrt{|\Gamma_i||\Gamma_j|}e^{-\delta k}.\end{equation} Thus,
Equations \ref{eq51} and \ref{eq52} together with Condition 2 imply
that for $i \ne j$:
\begin{equation}\label{eq53}\left| {\proba}(A_{i,k}\cap A_{j,k}) -{\proba}(A_{i,k}){\proba}(A_{j,k})\right|\le
4{\max (i,j)}^de^{-\delta k}.
\end{equation}

Define $L_k:=e^{\frac{\delta k}{d+1}}$. Equation $(\ref{eq53})$
shows that for every distinct $s \le i,j \le L_k$ we have
$|W_k(i,j)|\le 4e^{-\frac{\delta k }{d+1}}$ where $W_k(i,j)$ is
defined in a similar manner to the definition in Lemma \ref{prob}.
Hence,
$$\Delta_k:=\max_{1 \le i \ne
j \le L_k}|W_k(i,j)|\le 4e^{-\frac{\delta k }{d+1}}=4L_k^{-1}$$
while Condition 4 implies that
$$M_k:=\sum_{s \le i \le L_k}\proba(A_{i,k})\ge c(L_k-s).$$
If $k \ge \frac{d+1}{\delta}\log(2s )$ then $L_k-s \ge
\frac{1}{2}L_k$. Proposition \ref{prob} implies that for every $k
\ge \frac{d+1}{\delta}\log(2s)$:

$$\prob(w_k \in Z)\le \proba(\walks(\Sigma) \setminus \bigcup_{i \in L_k}A_{i,k})
\le \frac{L_k+{L_k}^2\Delta_k}{M_k^2}\le \frac{20}{c^2L_k} =
\frac{20}{c^2}e^{-\frac{\delta k}{d+1}}.$$
\end{proof}

We are now ready to formulate the Group Large Sieve (GLS) method.
The reader may note that Theorem B of the introduction is a special
case of the following:

\begin{thm}[GLS]\label{group large sieve} Fix $s\ge 2$. Let $\Gamma$ be a finitely
generated group and let $\Sigma$ be an admissible multi-subset. Let
$\Lambda$ be a finite index subgroup of $\Gamma$ and let
$(N_i)_{i\ge 2}$ be family of normal finite index subgroups of
$\Gamma$ which are contained in $\Lambda$. Let $Z \subseteq \Sigma$
and assume that the following conditions hold:
\begin{itemize}
\item[1.] $\Gamma$ has property-$\tau$ w.r.t the family $\{N_i\cap
N_j \mid i,j \ge s\}$.
\item[2.] There exists a constant $d$ such that $|\Gamma/N_j| \le j^d$
for every $j\ge s$.
\item[3.] $|\Lambda/N_i \cap N_j|=|\Lambda/N_i||\Lambda/N_j|$ for every distinct $i,j \ge s$.
\item[4.] There exists $c>0$ such that for every coset $C\in \Gamma/\Lambda$ and every $j \ge s$,
$$|(Z\cap C)N_j/N_j|\le (1-c)|\Lambda/N_j|.$$
\end{itemize}
Then there are positive constants $\gamma$ and $t$ such that
$\prob(w_k \in Z)\le e^{-\gamma k}$ for every $k \ge t \log s$. In
particular, $Z$ is exponentially small.

In fact, the constants $\gamma$ and $t$ depends only on the spectral
gap of the family of Cayley graphs $\{\Cay(\Gamma/N_i,\Sigma
N_i/N_i) \mid i \ge 2\}$, the size of $\Sigma$, the length of the
shortest odd cycle in $\Cay(\Gamma,\Sigma)$ and the two constants
$c$ and $d$
\end{thm}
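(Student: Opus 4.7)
My plan is to reduce the theorem to a coset-wise version of Theorem \ref{sieve theorem}'s proof. Because $\Lambda$ has finite index in $\Gamma$, decompose $\Gamma = \bigsqcup_{C} C$ into $\Lambda$-cosets; it then suffices to bound $\prob(w_k \in Z \cap C)$ exponentially for each coset $C$ and sum over the finitely many cosets. Fix such a $C$. Since $N_i \subseteq \Lambda$, the coset $C$ is a union of $N_i$-cosets, so $\pi_i^{-1}(\pi_i(C)) = C$ and $|\pi_i(C)| = |\Lambda/N_i|$. Define $T_i^C := \pi_i(C) \setminus \pi_i(Z \cap C)$; by hypothesis 4 one has $|T_i^C| \ge c\,|\Lambda/N_i|$. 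Let $A_i := \{\pi_i(w_k) \in T_i^C\}$. Then $A_i \subseteq B := \{w_k \in C\}$, and any $w_k \in Z \cap C$ satisfies $\pi_i(w_k)\in\pi_i(Z\cap C)$, which is disjoint from $T_i^C$, so
\[
\{w_k \in Z \cap C\} \;\subseteq\; B \cap \bigcap_{s \le i \le L_k}\overline{A_i}.
\]

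Corollary \ref{property tau corol} (fed by hypothesis 1) combined with hypothesis 2 yields, for $i, j \ge s$,
\begin{align*}
\prob(A_i) &= \tfrac{|T_i^C|}{|\Gamma_i|} + O\!\bigl(\sqrt{|\Gamma_i|}\,e^{-\delta k}\bigr),\\
\prob(A_i\cap A_j) &= \tfrac{|T_i^C|\,|T_j^C|}{|\Gamma_{i,j}|} + O\!\bigl(\sqrt{|\Gamma_{i,j}|}\,e^{-\delta k}\bigr),\\
\prob(B) &= [\Gamma\!:\!\Lambda]^{-1} + O(e^{-\delta k}),
\end{align*}
where the pairwise main term uses hypothesis 3: the isomorphism $\Lambda/(N_i \cap N_j) \cong \Lambda/N_i \times \Lambda/N_j$ restricts to a bijection $\pi_{i,j}(C) \to \pi_i(C)\times \pi_j(C)$, making $|\{gN_{i,j} : \pi_i(g) \in T_i^C,\ \pi_j(g) \in T_j^C\}| = |T_i^C|\cdot|T_j^C|$. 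The relation $|\Gamma_{i,j}| = |\Gamma_i|\,|\Gamma_j|/[\Gamma\!:\!\Lambda]$ shows the unconditional covariance $\prob(A_i\cap A_j)-\prob(A_i)\prob(A_j)$ carries a factor $([\Gamma\!:\!\Lambda]-1)$ that is \emph{not} exponentially small — the $A_i$ are highly correlated through all lying in $B$.

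To decouple, pass to the conditional measure $\tilde\prob := \prob(\cdot \mid B)$. Dividing the estimates above by $\prob(B)$ (bounded away from $0$ for large $k$) yields
\[
\tilde\prob(A_i) = \tfrac{|T_i^C|}{|\Lambda/N_i|} + O\!\bigl(\sqrt{|\Gamma_i|}\,e^{-\delta k}\bigr) \;\ge\; c - o(1),\qquad \tilde\prob(A_i \cap A_j) = \tilde\prob(A_i)\tilde\prob(A_j) + O\!\bigl(\sqrt{|\Gamma_{i,j}|}\,e^{-\delta k}\bigr),
\]
where the $[\Gamma\!:\!\Lambda]$-factors in numerator and denominator now cancel to give genuine approximate pairwise independence with density at least $c$. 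This is precisely the input Lemma \ref{prob} demands.

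Now mimic the parameter choice in the proof of Theorem \ref{sieve theorem}: set $L_k := e^{\delta k/(d+1)}$, so that, using $|\Gamma_{i,j}| \le (ij)^d \le L_k^{2d}$, the covariance bound $\tilde\Delta_k := \max_{i\ne j}|\tilde W(i,j)| \le O(L_k^{-1})$ holds, and $\tilde M_k := \sum_{s\le i\le L_k} \tilde\prob(A_i) \ge c(L_k - s) \ge cL_k/2$ once $k \ge \tfrac{d+1}{\delta}\log(2s)$. Lemma \ref{prob} then yields
\[
\tilde\prob\!\Bigl(\bigcap_i \overline{A_i}\Bigr) \;\le\; \tfrac{L_k + L_k^2 \tilde\Delta_k}{\tilde M_k^2} \;=\; O\!\bigl(e^{-\delta k/(d+1)}\bigr),
\]
and multiplying by $\prob(B) \le 1$ and summing over the $[\Gamma\!:\!\Lambda]$ cosets gives the theorem with $\gamma := \delta/(d+1)$ and $t$ depending only on $\delta, d, c$ (and, through $\delta$, on the spectral gap, $|\Sigma|$, and the shortest odd cycle). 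The main conceptual obstacle is the non-independence in the unconditional measure, caused by hypothesis 3 giving the CRT-type factorization only for $\Lambda$; once this is repaired by the coset-wise conditioning above, the quantitative bookkeeping is essentially a copy of the proof of Theorem \ref{sieve theorem}.
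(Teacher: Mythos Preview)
Your argument is correct and establishes the exponential-smallness conclusion, but it proceeds along a genuinely different route from the paper's proof. The paper does \emph{not} condition coset-by-coset. Instead, it exploits hypothesis~4 to build, for each $i\ge s$, a single target set $T_i\subseteq \Gamma/N_i\setminus \pi_i(Z)$ that is \emph{balanced across cosets}: one picks $b_i\ge c$ and arranges $|T_i\cap C/N_i|=b_i\,|\Lambda/N_i|$ for every coset $C\in\Gamma/\Lambda$. With this choice $|T_i|/|\Gamma_i|=b_i$ exactly, and hypothesis~3 then forces the joint density $|\{gN_{i,j}:gN_i\in T_i,\ gN_j\in T_j\}|/|\Gamma_{i,j}|$ to equal $b_ib_j$ on the nose, so Conditions~0--4 of Theorem~\ref{sieve theorem} hold verbatim and that theorem is invoked as a black box. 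No conditioning, no re-running of Lemma~\ref{prob}, and---importantly---no appearance of $[\Gamma:\Lambda]$ anywhere in the constants.

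Your conditioning trick is a perfectly natural alternative and handles the correlation you correctly identified, but it has a small cost: dividing by $\prob(B)\approx [\Gamma:\Lambda]^{-1}$ multiplies every error term by a factor of order $[\Gamma:\Lambda]$, so your final bound is really $O\bigl([\Gamma:\Lambda]\,e^{-\delta k/(d+1)}\bigr)$. This still gives ``$Z$ is exponentially small'', but the constants $\gamma,t$ you obtain depend on $[\Gamma:\Lambda]$ in addition to the listed data, falling just short of the ``In fact'' clause of the theorem. The paper's balanced-target device sidesteps this dependence entirely; that is the main thing its approach buys.
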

\begin{proof} It is enough to verify the Conditions of Theorem \ref{sieve theorem}.
The existence of constants $\delta$ and $d$ for which Conditions 1
and 2 of Theorem \ref{sieve theorem} hold follows from Conditions 1
and 2 of the current corollary together with Corollary \ref{property
tau corol}. Condition 4 of the corollary allows us to choose for
every $i \ge s$ a constant $b_i \ge c$ and a subset $T_i \subseteq
\Gamma/N_i \setminus ZN_i/N_i$ such that $|T_i \cap
C/N_j|=b_i|\Lambda/N_i|$ for every coset $C \in \Gamma/\Lambda$.
Conditions $0$ and $4$ of Theorem \ref{sieve theorem} are readily
satisfied by the definition of $T_i$. The definition also implies
that for every $i \ge s$, $\frac{|T_i|}{|\Gamma_{i,i}|}=b_i$. In
turn, Condition 3 of the corollary ensures that for every distinct
$i,j \ge s$, $\frac{|\{gN_{i,j} \mid gN_i \in T_i \ \wedge \ gN_j
\in T_j \}|}{|\Gamma_{i,j}|} = b_ib_j$. Thus, Condition 3 of Theorem
\ref{sieve theorem} also holds.
\end{proof}

Our next goal is to apply the group large sieve method in the
situation where $\Gamma$ is a subgroup of $\SL_n(\Z)$. For every
prime $p$, let $\pi_p:\Gamma \rightarrow \SL_n(\Z/p\Z)$ be the
modulo-$p$ homomorphism. We shall see that it is fruitful to define
$N_i:=\ker \pi_{p_i}$ where $p_2,p_3,\ldots$ is an ascending
enumeration of the primes which belong to some arithmetic
progression. In order to verify Condition 2 of Corollary \ref{group
large sieve} with respect to the sequence $(N_i)_{i \ge 2}$, we need
the next theorem about the density of the primes in arithmetic
progressions (for a proof see Chapter 22 of \cite{Da}).

\begin{thm}[Quantitative Dirichlet's Theorem]\label{qdt}
Let $0 \le \varepsilon<1$ and $t \ge 1$ be constants. Then there
exists a constant $r>0$ such that for every $k \ge r$, every $x \ge
e^{\frac{k}{t}}$ and every two coprime numbers $1 \le a,b \le k^t$
the number of primes $p$ which satisfy:
\begin{itemize}
\item $1 \le p \le  x$
\item $p$ belongs to the series $(a+bj)_{j\ge 1}$.
\end{itemize}
is at least $\frac{1-\varepsilon}{\varphi(b)}\frac{x}{\log x}$ where
$\varphi$ is the Euler function.
\end{thm}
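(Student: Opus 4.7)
The plan is to derive this directly from the Siegel--Walfisz theorem, the standard quantitative refinement of Dirichlet's theorem on primes in arithmetic progressions. Recall that Siegel--Walfisz asserts that for every fixed $A > 0$ there exist constants $c_A, C_A > 0$ such that
$$\pi(x;q,a) := \#\{p \le x : p \text{ prime},\ p \equiv a \pmod{q}\} = \frac{\operatorname{Li}(x)}{\varphi(q)} + R(x,q,a),$$
with $|R(x,q,a)| \le C_A\, x \exp(-c_A \sqrt{\log x})$, uniformly for all pairs $(a,q)$ with $\gcd(a,q) = 1$ and $1 \le q \le (\log x)^A$.

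My first step would be to fix $A := t+1$ and check that the hypotheses of the target theorem place us safely inside the Siegel--Walfisz regime. Indeed the hypothesis $x \ge e^{k/t}$ forces $\log x \ge k/t$, so $b \le k^t \le (t \log x)^t \le (\log x)^{t+1}$ for all $k$ larger than some threshold depending on $t$. Next I would combine the Siegel--Walfisz estimate with the classical asymptotic $\operatorname{Li}(x) = (1+o(1))\, x/\log x$. Since $\varphi(b) \le b \le (\log x)^{t+1}$, the main term $\operatorname{Li}(x)/\varphi(b)$ is at least of order $x/(\log x)^{t+2}$, whereas the Siegel--Walfisz error $C_A\, x \exp(-c_A \sqrt{\log x})$ decays faster than any fixed negative power of $\log x$. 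Hence for $k$ sufficiently large (depending only on $\varepsilon$ and $t$) the error is absorbed into an $\varepsilon$-fraction of the main term, and the claimed lower bound $\frac{1-\varepsilon}{\varphi(b)}\cdot\frac{x}{\log x}$ follows.

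The main point worth flagging, though it is not really an obstacle for the present purpose, is that the threshold $r$ in the statement is inevitably \emph{ineffective}: the constants $c_A, C_A$ in Siegel--Walfisz depend on a use of Siegel's theorem to handle the possible exceptional real zero of a Dirichlet $L$-function modulo $b$, which gives only an ineffective lower bound on the width of the zero-free region. Since we only need existence of $r$, this costs us nothing, and the proof in Chapter 22 of \cite{Da} --- via the explicit formula for $\psi(x;q,a)$, the classical zero-free region for $L(s,\chi)$, and Siegel's bound on the exceptional zero --- can be imported wholesale rather than redeveloped here.
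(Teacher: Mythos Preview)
Your proposal is correct and matches the paper's treatment: the paper does not give its own proof but simply cites Chapter~22 of \cite{Da}, which is precisely the Siegel--Walfisz theorem you invoke. Your verification that $b \le k^t \le (\log x)^{t+1}$ places the modulus in the Siegel--Walfisz range, and your observation that the resulting constant $r$ is ineffective, are both exactly on point.
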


A straightforward Corollary of Theorem \ref{qdt} is:
\begin{corol}\label{qdl}
Let $t >0$. Then there is a positive constant $r$ depending only on
$t$ such that for every $k \ge r$ and every two coprime natural
numbers $a,b \le k^t$, if $(p_i)_{i\ge 1}$ is an ascending
enumeration of the primes which belong to the sequence $(a+bi)_{i
\ge 1}$ then $p_i \le i^2$ for every $i \ge e^{\frac{k}{t}}$.
\end{corol}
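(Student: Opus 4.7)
The plan is to deduce the corollary directly from Theorem \ref{qdt} by taking $x = i^2$. Concretely, if we can show that for every $i \ge e^{k/t}$ the number of primes in $(a+bj)_{j\ge 1}$ lying below $i^2$ is at least $i$, then the $i$-th such prime $p_i$ must satisfy $p_i \le i^2$, which is what we want.

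Fix $\varepsilon = \tfrac{1}{2}$ (any value in $[0,1)$ will do) and let $r_0$ be the constant provided by Theorem \ref{qdt} for this $\varepsilon$ and for the exponent $\max(t,1)$. Given $k \ge r_0$ and coprime $a,b \le k^t$, for any $i \ge e^{k/t}$ set $x := i^2$; then $x \ge e^{2k/t} \ge e^{k/t}$, so the hypotheses of Theorem \ref{qdt} are satisfied. The theorem supplies the lower bound
\begin{equation*}
\#\bigl\{\,p \text{ prime} : p \le i^2, \ p \in (a+bj)_{j\ge 1}\,\bigr\} \;\ge\; \frac{1-\varepsilon}{\varphi(b)} \cdot \frac{i^2}{\log(i^2)} \;=\; \frac{i^2}{4\,\varphi(b)\,\log i}.
\end{equation*}
Using the trivial bound $\varphi(b) \le b \le k^t$, this is at least $\dfrac{i^2}{4 k^t \log i}$, and we want this to exceed $i$, i.e.
\begin{equation*}
\frac{i}{\log i} \;\ge\; 4 k^t.
\end{equation*}

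The function $i/\log i$ is monotonically increasing for $i \ge e$, so it suffices to check the inequality at the smallest allowed value $i = e^{k/t}$, where it reads
\begin{equation*}
\frac{e^{k/t}}{k/t} \;\ge\; 4 k^t, \qquad \text{equivalently} \qquad e^{k/t} \;\ge\; \frac{4\,k^{t+1}}{t}.
\end{equation*}
Since the left side grows super-polynomially in $k$ while the right side is polynomial, this inequality holds for all $k$ larger than some threshold $r \ge r_0$ depending only on $t$. For any such $k$ and any $i \ge e^{k/t}$, there are at least $i$ primes of the progression in $[1, i^2]$, so $p_i \le i^2$. There is no real obstacle here; the only thing to verify carefully is that $r$ can be chosen independently of $a,b$, which is automatic because the bound $\varphi(b) \le k^t$ is uniform in $(a,b)$ subject to $b \le k^t$.
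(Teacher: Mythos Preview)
Your argument is correct and is precisely the ``straightforward'' deduction the paper has in mind; the paper itself gives no proof beyond declaring the corollary an immediate consequence of Theorem~\ref{qdt}. One minor remark: since you invoke Theorem~\ref{qdt} with parameter $t' = \max(t,1)$, the hypothesis you actually need is $x \ge e^{k/t'}$, which follows from your chain $x \ge e^{2k/t} \ge e^{k/t} \ge e^{k/t'}$ (the last inequality holding because $t \le t'$); you might make that final step explicit.
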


We can now deduce:

\begin{prop}\label{arithmetic} Fix $n \ge 2$ and $c>0$. Let $\Gamma$ be a Zariski-dense
subgroup of $\SL_n(\Z)$ with an admissible multi-subset $\Sigma$.
For every $q \in \N^+$, let $\pi_q:\Gamma \rightarrow \SL_n(\Z/q\Z)$
be the modulo-$q$ homomorphism. Then, there exist two positive
constants $\gamma$ and $r$, depending only on $\Gamma$, $\Sigma$,
and $c$ such that the following statement holds:

Let $Z$ be a subset of $\Gamma$ and let $k \ge r$ be a natural
number. Assume that there are two coprime natural numbers $2 \le a,b
\le k^2$ such that for every prime $p$ which belongs to the
arithmetic progression $(a+bj)_{j \ge 1}$, the size of $\pi_p(Z)$ is
at most $(1-c)|\SL_n(\Z/p\Z)|$. Then,
$$\prob(w_k \in Z) \le
e^{-\gamma k}.$$
\end{prop}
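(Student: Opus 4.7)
The plan is to apply the Group Large Sieve (Theorem \ref{group large sieve}) with $\Lambda = \Gamma$ and $N_i := \ker \pi_{p_i}$, where $(p_i)_{i \ge 1}$ is the ascending enumeration of the primes in the progression $(a+bj)_{j \ge 1}$, and where the starting index $s$ will be chosen as a function of $k$. Fix once and for all the following data depending only on $\Gamma$ and $\Sigma$: a constant $q_1$ supplied by Theorem \ref{SG-V}, a constant $q_2$ supplied by Theorem \ref{Strong} (applicable because $\SL_n$ is semisimple, connected, and simply connected, and $\Gamma$ is Zariski-dense), $Q := q_1 q_2$, and the spectral-gap constant $\delta_0 > 0$ obtained by feeding the property-$\tau$ family of Theorem \ref{SG-V} into Corollary \ref{property tau corol}.

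Restricting to indices $i$ for which $p_i$ is coprime to $Q$ (this excludes at most finitely many initial indices, uniformly in $a,b$), Strong Approximation gives $\pi_{p_i}(\Gamma) = \SL_n(\Z/p_i\Z)$ and, via the Chinese Remainder Theorem, $\pi_{p_i p_j}(\Gamma) = \SL_n(\Z/p_i\Z) \times \SL_n(\Z/p_j\Z)$ for distinct $i,j$; this, combined with Theorem \ref{SG-V}, secures conditions (1) and (3) of the GLS. Condition (4), with the constant $c$, is precisely the hypothesis on $|\pi_p(Z)|$. Only condition (2), the polynomial bound $|\Gamma/N_i| \le i^d$, requires further work: since $|\Gamma/N_i| \le p_i^{n^2}$, it suffices to show $p_i \le i^2$, which would give $d = 2n^2$.

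The polynomial bound on $p_i$ is delivered by Corollary \ref{qdl}. Choose once and for all $t > (2n^2+1)/\delta_0$ (any such $t$ works; it depends only on $\Gamma$ and $\Sigma$). Corollary \ref{qdl} produces $r_0 > 0$ such that for every $k \ge r_0$ and every coprime $a, b \le k^t$ (in particular for $a, b \le k^2$), we have $p_i \le i^2$ whenever $i \ge e^{k/t}$. Set $s_k := \max(s_0, \lceil e^{k/t} \rceil)$, where $s_0$ is a fixed constant large enough that $p_i$ is coprime to $Q$ for all $i \ge s_0$. With this choice, all four GLS hypotheses hold for $i, j \ge s_k$ with exponent $d = 2n^2$, so Theorem \ref{group large sieve} (through Theorem \ref{sieve theorem}) yields
$$\prob(w_k \in Z) \le \frac{20}{c^2}\, e^{-\delta_0 k/(2n^2+1)},$$
provided the threshold $k \ge \frac{2n^2+1}{\delta_0}\log(2 s_k)$ is met. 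For our choice of $s_k$ this threshold reads $k(1 - \tfrac{2n^2+1}{\delta_0 t}) \ge \text{const}$, and since $t > (2n^2+1)/\delta_0$ the coefficient on the left is strictly positive, so the inequality holds for all $k$ above some constant $r$ depending only on $\Gamma, \Sigma, c$. Absorbing the factor $20/c^2$ gives the desired bound $\prob(w_k \in Z) \le e^{-\gamma k}$ for any fixed $\gamma < \delta_0/(2n^2+1)$.

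The main subtlety (and the only nontrivial balance in the proof) is the calibration of $s_k$: it has to be large enough that Corollary \ref{qdl} supplies the exponent $d = 2n^2$ (forcing $s_k \gtrsim e^{k/t}$), yet small enough that the sieve threshold $\frac{d+1}{\delta_0}\log(2 s_k)$ stays strictly below $k$. These two requirements are compatible precisely because we may choose $t$ strictly larger than $(d+1)/\delta_0$, and this choice is what ultimately determines the decay rate $\gamma$.
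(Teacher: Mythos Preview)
Your proof is correct and follows essentially the same strategy as the paper: verify the four GLS conditions with $\Lambda=\Gamma$, $N_i=\ker\pi_{p_i}$, and $d=2n^2$, invoking Corollary~\ref{qdl} to control $p_i$ and calibrating the starting index $s$ as a function of $k$ so that the sieve threshold $\frac{d+1}{\delta}\log(2s)\le k$ is met. The only cosmetic difference is that the paper passes to the sub-progression $(1+bq_3 j)_{j\ge 1}$ (with $q_3=q_1q_2$) to automatically avoid the bad primes, whereas you keep the original progression $(a+bj)_{j\ge 1}$ and simply start the enumeration past the finitely many primes dividing $Q$; your handling is equally valid and arguably cleaner, since it preserves the residue class $a\bmod b$ on which the hypothesis on $\pi_p(Z)$ is stated.
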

\begin{proof} Let $a$ ,$b$, $c$, $k$ and $Z$ be as in the
statement. We need to show that there are positive constants
$\gamma$ and $r$ depending only on $\Gamma$ ,$\Sigma$ and $c$ such
that if $k \ge r$ then $$\prob(w_k \in Z) \le e^{-\gamma k}.$$

Denote $q_3:=q_1q_2$ where $q_1$ is as in Theorem \ref{SG-V} and
$q_2$ is as in Theorem \ref{Strong}. Note that $q_3$ depends only on
$\Gamma$ and $\Sigma$. Let $(p_i)_{i \ge 2}$ be an ascending
enumeration of the primes  which belong to the sequence
$(1+bq_3i)_{i \ge 1}$. Denote $N_i:=\ker \pi_{p_i}$ for every $i \ge
2$. Theorem \ref{SG-V} implies that $\Gamma$ has property-$\tau$
with respect to the family $(N_i \cap N_j)_{i,j \ge 2}$ and Theorem
\ref{Strong} (Strong Approximation) implies that if $q \in \N^+$ is
a product of primes which belong to $(a+bq_3i)_{i \ge 1}$ then
$\pi_q(\Gamma)=\SL_n(\Z/q\Z)$.

We now verify the four conditions of GLS (Theorem \ref{group large
sieve} for $\Lambda=\Gamma$). The above paragraph together with the
fact that $\SL_n(\Z/p_1p_2\Z) \cong \SL_n(\Z/p_1\Z) \times
\SL_n(\Z/p_2\Z)$ for distinct primes $p_1$ and $p_2$ imply that
Conditions 1 and 3 of GLS are satisfied for every $i,j\ge 2$.
Condition 4 of GLS is true for every $j \ge 2$ by assumption.
Finally, we will show that Condition 2 of GLS holds for $d:=2n^2$.
Let $\delta$ be the spectral gap of the family
$\{\Cay(\Gamma/N_i,\Sigma N_i/N_i) \mid i \ge 2\}$. Let
$\gamma:=\gamma(\delta,|\Sigma|,c,d)$ and
$t:=t(\delta,|\Sigma|,c,d)$ be the constants of GLS for $d=2n^2$.
Corollary \ref{qdl} implies that there exists a constant $r$ such
that if $k \ge r$ and $j \ge e^{\frac{k}{2t}}$ then $p_j \le j^2  $
so $|\Gamma/N_j|\le j^{2n^2}$. We can assume that $k \ge r$ and
denote $s:=e^{\frac{k}{2t}}$. Hence, Condition 2 of GLS holds for
every $j \ge s$. We finished to verify all the conditions of GLS.
Since $k \ge t \log s$, GLS implies that $\prob(w_k \in Z)\le
e^{-\gamma k}$.
\end{proof}

\subsection{An extension of Corollary \ref{arithmetic}}

This subsection is needed for the proof of the general case of
Theorem A. A reader that is only interested in the special case
(Zariski-dense subgroup of $\SL_n(\Z)$) can skip this subsection.

\begin{prop}\label{structre} Let $\Gamma \le \GL_n(\Q)$
be a finitely generated group such that its Zariski-closure in
$\GL_n(\C)$ is semisimple. Then there are:
\begin{itemize}
\item A finite index normal subgroup
$\Lambda$ of $\Gamma$.
\item A set $\mathcal{P}$ which contains almost
all primes.
\item For every prime $p\in \mathcal{P}$, an epimorphism $\pi_p:\Gamma\rightarrow
\Gamma_p$ such that $\Gamma_p$ is a non-trivial finite group and
$N_p:=\ker \pi_p $ is contained in $\Lambda$.
\item Constants $d,l,s\in \N^+$.
\end{itemize}
such that:
\begin{itemize}
\item[$1.$] $\Gamma$ has property-$\tau$ with respect to the
family $\{N_{p,q}\}_{p,q \in \mathcal{P}}$ where $N_{p,q}:=N_p\cap
N_q$.
\item[$2.$]  $\Lambda_p:=\pi_p(\Lambda)$ is a direct product of at most $l$ finite simple groups of Lie type of rank at most $l$
over finite extensions of $\F_p$ of degree at most $l$ for every $p
\in \mathcal{P}$. Furthermore, $\frac{1}{s}p^d \le |\Lambda_p| \le
sp^d \le p^{d+1}$.
\item[$3.$] The natural homomorphism $\pi_{p,q}:\Lambda_{p,q}
\rightarrow \Lambda_p \times \Lambda_{q}$ is an isomorphism for all
distinct $p,q\in\mathcal{P}$ where $\Lambda_{p,q}:=\Lambda/N_{p,q}$.
\end{itemize}
\end{prop}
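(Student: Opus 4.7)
Since $\Gamma$ is finitely generated, $\Gamma\subseteq\GL_n(\Z[\tfrac{1}{q_0}])$ for some $q_0$, and the identity component $\mathbf{G}^\circ$ of its Zariski closure in $\GL_n(\C)$ is connected semisimple (hence perfect) and defined over $\Q$. I would take $\Lambda:=\Gamma\cap\mathbf{G}^\circ(\Q)$, which is a finite-index normal subgroup of $\Gamma$ with Zariski closure $\mathbf{G}^\circ$, and pass to the simply connected cover $\tilde{\mathbf{G}}\to\mathbf{G}^\circ$. The group $\tilde{\mathbf{G}}$ decomposes as an almost direct product of $\Q$-simple factors $\operatorname{Res}_{K_i/\Q}(\mathbf{H}_i)$ with each $\mathbf{H}_i$ absolutely almost-simple and simply connected over a number field $K_i$. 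From this decomposition I would extract $l$ bounding the number of $\bar\Q$-absolutely-almost-simple factors of $\tilde{\mathbf{G}}$, their ranks, and the degrees $[K_i:\Q]$; $d:=\dim\mathbf{G}^\circ$; and $s$ from the uniform bound $\tfrac{1}{s}q^{\dim G}\le|G(\F_q)|\le sq^{\dim G}$ for finite groups of Lie type of the relevant root data.

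\textbf{Definition of $\mathcal{P}$ and $\pi_p$.} Let $S$ be a finite set of primes containing the divisors of $q_0$, the primes of bad reduction of $\mathbf{G}^\circ$, the exceptional primes of Theorems~\ref{SG-V} and~\ref{Strong}, and all primes at most $s$; let $\mathcal{P}$ be its complement. For $p\in\mathcal{P}$ let $\rho_p:\Gamma\to\GL_n(\F_p)$ be the mod-$p$ reduction, and define $\pi_p:\Gamma\to\Gamma_p$ as $\rho_p$ followed by the quotient by the center $Z_p$ of $\rho_p(\Lambda)$. The containment $N_p:=\ker\pi_p\subseteq\Lambda$ holds because for $p$ large the congruence kernel $\{g\in\Gamma:\rho_p(g)=I\}$ is contained in $\Lambda$ (the finite group $\Gamma/\Lambda$ embeds faithfully in $\GL_n(\F_p)$ for almost all $p$), and any $g\in N_p$ satisfies $\rho_p(g)\in Z_p\subseteq\rho_p(\Lambda)$, so $g$ equals an element of $\Lambda$ times a congruence element, which itself lies in $\Lambda$. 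With this setup $\Lambda_p=\pi_p(\Lambda)=\rho_p(\Lambda)/Z_p$, and by Nori's structural theorem for $\rho_p(\Lambda)^+$ combined with the decomposition of $\tilde{\mathbf{G}}$, $\Lambda_p$ factors as $\prod_{i,\mathfrak{p}\mid p}\mathbf{H}_i(\mathcal{O}_{K_i}/\mathfrak{p})/Z_i$ --- a direct product of at most $l$ finite simple groups of Lie type of rank $\le l$ over residue fields $\F_{p^{f_\mathfrak{p}}}$ with $f_\mathfrak{p}\le[K_i:\Q]\le l$ and total order satisfying $\tfrac{1}{s}p^d\le|\Lambda_p|\le sp^d\le p^{d+1}$, verifying condition~2.

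\textbf{Verification of conditions 1 and 3, and main obstacle.} Condition~1 follows from Theorem~\ref{SG-V}: $\Gamma$ has property-$\tau$ with respect to kernels of reductions modulo squarefree moduli coprime to some $q_1$, and the family $\{N_p\cap N_q\}_{p,q\in\mathcal{P}}$ sits inside that family up to a central quotient of uniformly bounded order, which preserves the expander property. Condition~3 follows from the Chinese Remainder Theorem together with strong approximation: for distinct $p,q\in\mathcal{P}$ the map $\tilde{\mathbf{G}}(\Z/pq\Z)\to\tilde{\mathbf{G}}(\F_p)\times\tilde{\mathbf{G}}(\F_q)$ is an isomorphism, which on descent through $\Lambda$ followed by the central quotient gives $\Lambda_{p,q}\cong\Lambda_p\times\Lambda_q$. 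The \emph{main obstacle} is precisely that $\Gamma$ generally does not lift to a subgroup of $\tilde{\mathbf{G}}(\Q)$, so Theorem~\ref{Strong} does not apply to $\Gamma$ directly --- one must invoke Nori's refinement identifying $\rho_p(\Gamma)^+$ with the $\F_p$-points of an algebraic group that for almost all $p$ agrees with $\mathbf{G}^\circ$, and then quotient by the uniformly bounded-order centers to convert the resulting product of quasi-simple groups into a product of genuine simple groups, while tracking that the same fixed $\Lambda$ and the same constants $l,d,s$ work uniformly across all $p\in\mathcal{P}$.
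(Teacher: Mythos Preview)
Your choice $\Lambda=\Gamma\cap\mathbf{G}^\circ(\Q)$ is too large, and this produces a genuine gap in condition~2. Dividing $\rho_p(\Lambda)$ by its center $Z_p$ does not in general yield a direct product of simple groups: Nori's theorem identifies only $\rho_p(\Lambda)^+$ (the subgroup generated by $p$-elements) with the image of $\tilde{\mathbf{G}}(\F_p)$, while the quotient $\rho_p(\Lambda)/\rho_p(\Lambda)^+$ can be a nontrivial abelian group of bounded order. Concretely, take $\mathbf{G}^\circ=\mathrm{PGL}_2$ (embedded in $\GL_3$ via the adjoint representation) and $\Gamma$ a Zariski-dense subgroup containing the image of a matrix of determinant~$2$; for the infinitely many primes $p$ with $2\notin(\F_p^*)^2$ one gets $\rho_p(\Lambda)=\mathrm{PGL}_2(\F_p)$, whose center is trivial, so your $\Lambda_p=\mathrm{PGL}_2(\F_p)$ is an extension of $\Z/2\Z$ by $\PSL_2(\F_p)$ and is not simple. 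Your final paragraph names the obstacle correctly, but the proposed remedy (quotient by centers) treats the wrong side of $\rho_p(\Lambda)^+$: the center sits \emph{below} it, whereas the obstruction to simplicity sits \emph{above} it.

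The paper resolves this by shrinking $\Lambda$ further. It first uses that $\psi(\tilde{\mathbf{G}}(\Q))$ is normal and coabelian in $\mathbf{G}^\circ(\Q)$ to find a finite-index subgroup $\Gamma_1\le\Gamma^\circ$ that \emph{does} lift isomorphically to some $\tilde{\Gamma}_1\le\tilde{\mathbf{G}}(\Q)$; Theorem~\ref{Strong} then applies directly to $\tilde{\Gamma}_1$, and the image of $\Gamma_1$ in $\mathbf{G}^\circ(\F_p)$ coincides with the image of $\tilde{\mathbf{G}}(\F_p)$, of index at most some fixed constant~$b$. The paper then takes $\Lambda$ to be the intersection of all subgroups of index at most $b$ in $\Gamma_1$ and defines $\pi_p$ to land in $\tilde{\mathbf{G}}(\F_p)/\mathrm{Z}(\tilde{\mathbf{G}}(\F_p))$, which is the required product of finite simple groups of Lie type. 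Your approach can be repaired by the same device: replace your $\Lambda$ with the intersection of all its subgroups of index at most $b$, so that $\rho_p(\Lambda)\subseteq\rho_p(\Gamma^\circ)^+$ for every large~$p$.
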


\begin{proof}  Since $\Gamma$ is finitely generated it is contained in $\GL_n(\Z[\frac{1}{q_0}])$
for some $q_0 \in \N^+$. The connected component of the
Zariski-closure of $\Gamma$ in $\GL_n(\C)$, denoted by
$\mathrm{G}(\C)$, is defined over $\Q$. Hence, for every large
enough prime $p$ the group $\mathrm{G}(\F_p)$ and the residue map
$\Gamma^\circ \rightarrow \mathrm{G}(\F_p)$  is defined where
$\Gamma^\circ:=\Gamma \cap \mathrm{G}(\C) $.

We would like to apply the Strong Approximation Theorem to conclude
that for every large enough prime $p$ the residue map $\Gamma^\circ
\rightarrow \mathrm{G}(\F_p)$ is an epimorphism. But we have a
problem, our $\mathrm{G}(\C)$ is connected and semisimple but not
necessarily simply connected. To overcome this problem we let
$\psi:\tilde{\mathrm{G}}\rightarrow \mathrm{G}$ be the universal
cover of $\mathrm{G}$. The algebraic group $\tilde{\mathrm{G}}$ and
the rational homomorphism $\psi$ are defined over $\Q$. The group
$\psi(\tilde{\mathrm{G}}(\Q))$ is a normal coabelian subgroup of
$\mathrm{G}(\Q)$. This implies that there exists
$\tilde{\Gamma}_1\le \tilde{\mathrm{G}}(\Q) $ where
$\psi|_{\tilde{\Gamma}_1}$ is an isomorphism onto a finite index
subgroup $\Gamma_1$ of $\Gamma^\circ$ (see chapter 16 of
\cite{LuSe}). The image of $\Gamma_1$ under the residue map
$\Gamma_1 \rightarrow {\mathrm{G}}(\F_p)$ is the same as the image
of $\tilde{\Gamma}_1$ under the composition $\tilde{\Gamma}_1
\rightarrow \tilde{\mathrm{G}}(\F_p) \rightarrow \mathrm{G}(\F_p)$.
The Strong Approximation Theorem (Theorem \ref{Strong}) says that
for large enough prime $p$ the first homomorphism is an epimorphism.
The kernel of the second homomorphism is contained in the center of
$\tilde{\mathrm{G}}(\F_p)$ and the image is of index at most $b$ in
$\mathrm{G}(\F_p)$ where $b\in \N^+$ is some constant independent of
$p$.

We define $\Lambda$ to be the intersection of all subgroups of index
at most $b$ of $\Gamma_1$. For large enough prime $p$ we get a
homomorphism $\pi_p:\Lambda \rightarrow
\tilde{\mathrm{G}}(\F_p)/\mathrm{Z}(\tilde{\mathrm{G}}(\F_p))$ and
we denote $N_p:=\ker \pi_p$. The structure of
$\tilde{\mathrm{G}}(\F_p)/\mathrm{Z}(\tilde{\mathrm{G}}(\F_p))$ is
well known and there are $c,d,l\in \N^+$ such that the requirements
of condition 2 are satisfied (see \cite{JKZ} and the reference
therein). In fact, $d:=\dim(\tilde{\mathrm{G}}(\C))$. In particular,
this structure assures that $\pi_p$ is an epimorphism for a large
enough prime $p$ so condition 2 is satisfied for the set
$\mathcal{P}$ consisting of large enough primes. Condition 3 then
follows since two finite simple Lie groups over field of different
characteristics are not isomorphic. Condition 1 follows from Theorem
\ref{SG-V}.
\end{proof}

The proof of the next Proposition is identical to the proof of
Proposition \ref{arithmetic} so we omit it.
\begin{prop}\label{arithmetic2} Fix a positive constant $c$. Let $\Gamma$ be
as in Proposition \ref{structre} and let $\Sigma$ be an admissible
generating multi-set of $\Gamma$. Then, there exist two positive
constants $\gamma$ and $r$, depending only on $\Gamma$ and $\Sigma$
and $c$ such that the following statement holds:

Let $Z$ be a subset of $\Gamma$ and let $k \ge r$ be a natural
number. Assume that there are two coprime natural numbers $2 \le a,b
\le k^5$ such that for every prime $p$ which belongs to the
arithmetic progression $(a+bj)_{j \ge 1}$ and every coset $C \in
\Gamma_p/\Lambda_p$, the size of $\pi_p(Z)\cap C$ is at most
$(1-c)|\Lambda_p|$. Then,
$$\prob(w_k \in Z) \le
e^{-\gamma k}.$$
\end{prop}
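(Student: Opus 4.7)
The plan is to follow the proof of Proposition \ref{arithmetic} essentially verbatim, replacing the direct use of Theorems \ref{SG-V} and \ref{Strong} with the packaged output of Proposition \ref{structre}, and applying the Group Large Sieve (Theorem \ref{group large sieve}) to the finite index normal subgroup $\Lambda$ furnished by Proposition \ref{structre} rather than to $\Lambda = \Gamma$. First I would invoke Proposition \ref{structre} to obtain $\Lambda$, the set $\mathcal{P}$, the family of epimorphisms $\pi_p:\Gamma \to \Gamma_p$ with kernels $N_p \subseteq \Lambda$, and the constants $d, l, s \in \N^+$.

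Next, given coprime $2 \le a, b \le k^5$, I would construct an ascending enumeration $(p_i)_{i \ge 2}$ of primes that simultaneously lie in the progression $(a + b j)_{j \ge 1}$ and in $\mathcal{P}$. Choosing an auxiliary modulus $q_3$ so that every prime coprime to $q_3$ lies in $\mathcal{P}$, and using the Chinese Remainder Theorem to pick $a^* \equiv a \pmod{b}$ with $\gcd(a^*, b q_3) = 1$, the primes in the single arithmetic progression $a^* + b q_3 \N$ meet both requirements. Setting $N_i := N_{p_i}$, I would then verify the four hypotheses of Theorem \ref{group large sieve}: Condition~1 (property-$\tau$) is Proposition \ref{structre}(1); Condition~3 (multiplicativity of indices) is Proposition \ref{structre}(3); Condition~4 (the coset bound) is the standing hypothesis on $Z$, since by Proposition \ref{structre}(2) $\pi_{p_i}(\Lambda) = \Lambda_{p_i}$, and because $N_{p_i} \subseteq \Lambda$ one has $\pi_{p_i}(Z \cap C) = \pi_{p_i}(Z) \cap \pi_{p_i}(C)$ for every coset $C$ of $\Lambda$ in $\Gamma$.

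The only quantitative step remaining is Condition~2 (polynomial growth of $|\Gamma/N_i|$). From Proposition \ref{structre}(2) one has $|\Gamma/N_i| \le [\Gamma : \Lambda] \cdot s \cdot p_i^d$, so it suffices to bound $p_i$ polynomially in $i$. Applying Corollary \ref{qdl} with $t$ large enough to dominate the exponent $5$ in the hypothesis $a,b \le k^5$ yields $p_i \le i^2$ for all $i \ge e^{k/t}$, hence $|\Gamma/N_i| \le i^{2d+1}$ for such $i$. Absorbing the initial range into a shifted threshold $s_0 := \lceil e^{k/t} \rceil$ and applying Theorem \ref{group large sieve} (with starting index $s_0$ in place of $s$) yields $\prob(w_k \in Z) \le e^{-\gamma k}$ for $k \ge r$, with $\gamma$ and $r$ depending only on the data listed in the proposition. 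The only real hurdle is tracking the exponents: one must choose $t$ in Corollary \ref{qdl} large enough to accommodate both the permitted range $a,b \le k^5$ and the dimension constant $d$ coming from Proposition \ref{structre}(2), which explains the jump from $k^2$ in Proposition \ref{arithmetic} to $k^5$ here.
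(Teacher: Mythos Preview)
Your proposal is correct and matches the paper's approach exactly: the paper simply states that the proof of Proposition~\ref{arithmetic2} is identical to that of Proposition~\ref{arithmetic} and omits it, and you have supplied precisely those details, correctly replacing the direct appeals to Theorems~\ref{SG-V} and~\ref{Strong} by the packaged conclusions of Proposition~\ref{structre} and applying Theorem~\ref{group large sieve} with the subgroup $\Lambda$ in place of $\Gamma$. One small expository remark: the jump from $k^2$ to $k^5$ is dictated by the downstream application (the size of $b$ in Theorem~\ref{power main4}) rather than by anything internal to this proof, but this does not affect the argument's validity.
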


\section{Proof of Theorem A for a Zariski-dense subgroups of $\SL_n(\Z)$}\label{chap special}

In this section $\Gamma$ denotes  a Zariski-dense subgroup of
$\SL_n(\Z)$ and $\Sigma $ is an admissible generating multi-subset
of $\Gamma$.

\begin{lemma}\label{powers lemma} Fix $m \ge 2$. Let $p$ be a prime which satisfies:
\begin{itemize}
\item $p\ge 3\binom{n}{2}+1$.
\item $p=1(\Mod m)$.
\end{itemize} Then $|\{g^m \mid g\in\SL_n(\Z/p\Z) \}| \le
(1-\frac{1}{6n!})|\SL_n(\Z/p\Z)|$.
\end{lemma}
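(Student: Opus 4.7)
The plan is to exhibit a subset of $\SL_n(\F_p)$ of size at least $|\SL_n(\F_p)|/(6n!)$ consisting of non-$m$-th-powers, drawn entirely from the \emph{split regular semisimple} locus, i.e.\ the matrices diagonalizable over $\F_p$ with $n$ distinct eigenvalues.

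The central reduction is a conjugation-invariant characterization of when such an element is an $m$-th power: a split regular semisimple $g\in \SL_n(\F_p)$ with eigenvalues $\lambda_1,\ldots,\lambda_n\in\F_p^*$ is an $m$-th power in $\SL_n(\F_p)$ if and only if every $\lambda_i$ is an $m$-th power in $\F_p^*$. The direction $(\Rightarrow)$ uses that any $h$ with $h^m=g$ must centralize $g$, and since $g$ has $n$ distinct eigenvalues its centralizer in $\GL_n(\F_p)$ is the diagonal torus in an eigenbasis of $g$; hence $h$ is diagonal with entries $\mu_i$ satisfying $\mu_i^m=\lambda_i$. For $(\Leftarrow)$ the hypothesis $p\equiv 1\pmod m$ enters crucially: given any $\nu_i\in\F_p^*$ with $\nu_i^m=\lambda_i$, the product $\prod \nu_i$ is some $m$-th root of unity in $\F_p^*$ (because $\prod \lambda_i=1$), and since $\F_p^*$ contains all $m$-th roots of unity one can replace a single $\nu_i$ by $\zeta \nu_i$ for a suitable $m$-th root of unity $\zeta$ to enforce $\prod \nu_i=1$, producing a lift $h\in\SL_n(\F_p)$ of $g$.

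With this characterization in hand the problem reduces to a torus calculation. Let $T\le \SL_n(\F_p)$ be the split diagonal torus (so $|T|=(p-1)^{n-1}$), $W=N_{\SL_n}(T)/T\cong S_n$, let $T_\textrm{reg}\subset T$ denote the subset of regular (distinct-entries) elements, and let $U\le T$ be the subgroup of those with all diagonal entries in $(\F_p^*)^m$. A direct count shows $|U|/|T|=1/m^{n-1}$ (the first $n-1$ entries range freely over $(\F_p^*)^m$, and the last is automatically an $m$-th power by the determinant constraint). Combining standard orbit counting — each split regular semisimple $\SL_n(\F_p)$-conjugacy class meets $T$ in a free $W$-orbit and has size $|\SL_n(\F_p)|/|T|$ — with the characterization above, the number of split regular semisimple elements which are \emph{not} $m$-th powers equals
\[
\frac{|T_\textrm{reg}\setminus U|}{|T|}\cdot\frac{|\SL_n(\F_p)|}{|W|}.
\]
It therefore suffices to prove $|T_\textrm{reg}\setminus U|/|T|\ge 1/6$.

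For this final step a pair-by-pair count, using that for $n\ge 3$ the set $\{t\in T:t_i=t_j\}$ is a subgroup of index $p-1$ in $T$, gives $|T\setminus T_\textrm{reg}|\le\binom{n}{2}(p-1)^{n-2}$; the hypothesis $p\ge 3\binom{n}{2}+1$ then yields $|T_\textrm{reg}|/|T|\ge 2/3$, and since $m\ge 2$ and $n\ge 2$ we have $1/m^{n-1}\le 1/2$, so $|T_\textrm{reg}\setminus U|/|T|\ge 2/3-1/2=1/6$. The case $n=2$, where the inequality on $|T\setminus T_\textrm{reg}|$ is slightly too crude, is handled by observing that $I\in T_\textrm{reg}^c\cap U$ always, which sharpens the bound on $|T_\textrm{reg}\cap U|$ by one and is more than enough for $p\ge 5$. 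The main subtlety in the whole argument is the sufficiency direction of the characterization — producing a determinant-one lift from eigenvalue lifts — which is precisely where $p\equiv 1\pmod m$ is indispensable.
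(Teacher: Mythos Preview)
Your argument is correct and uses the same torus--Weyl group infrastructure as the paper (the diagonal torus $T$, the regular locus $T_{\mathrm{reg}}$, the bound $|T_{\mathrm{reg}}|\ge \tfrac{2}{3}|T|$ from $p\ge 3\binom{n}{2}+1$, and the fact that $[N:T]=n!$), but the two proofs combine these ingredients differently.

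The paper bounds the \emph{image} of the $m$-th power map directly: writing $\bar S$ for the split regular semisimple locus, it observes
\[
|\{g^m:g\in\SL_n(\F_p)\}|\ \le\ |\{s^m:s\in\bar S\}|\ +\ |\SL_n(\F_p)\setminus\bar S|,
\]
and then bounds each term separately, using only that the $m$-th power map on $T$ has image of size $\le |T|/2$. In particular it never needs to decide, for a given split regular semisimple element, whether it is an $m$-th power in $\SL_n(\F_p)$. Your route instead exhibits an explicit set of non-$m$-th-powers inside $\bar S$, which forces you to prove the extra (easy but genuine) fact that any $m$-th root of a split regular semisimple element lies in its centralizing torus; this is your $(\Rightarrow)$ direction. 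That buys you an exact description of the $m$-th-power locus inside $\bar S$, which is more informative, at the cost of one additional step.

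One minor remark: the $(\Leftarrow)$ direction of your characterization is not actually needed for the inequality---you only use $(\Rightarrow)$ to conclude that conjugates of $T_{\mathrm{reg}}\setminus U$ are non-powers, and a lower bound on $|T_{\mathrm{reg}}\setminus U|$ suffices. The hypothesis $p\equiv 1\pmod m$ is indispensable not so much for $(\Leftarrow)$ as for the size computation $|U|/|T|=m^{-(n-1)}\le 1/2$; without it the $m$-th power map on $\F_p^*$ could be a bijection and the lemma would fail outright. Your handling of the $n=2$ boundary case via $I\in U\setminus T_{\mathrm{reg}}$ is correct and is the only place where the argument is delicate.
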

\begin{proof}
The subset $T$ of diagonal matrices of $\SL_n(\Z/p\Z)$  is a
subgroup isomorphic to $C_{p-1}^{n-1}$ where $C_{p-1}$ is a cyclic
group of order $p-1$. Since $m$ divides $p-1$ the map $x \mapsto
x^m$ is $m^{n-1}$-to-$1$ on $T$ so
$$\left|\left\{t^m \mid t \in T\right\}\right| \le
\frac{1}{m^{n-1}}|T|\le \frac{1}{2}|T|.$$

An element of $T$ is called regular if all its non-zero entries are
distinct. The size of the set $S$ consisting of regular elements is
at least $(p-1)^{n-1}-\binom{n}{2}(p-1)^{n-2}$. Thus, if $p \ge
1+3\binom{n}{2}$ then $$|S| \ge \frac{2}{3}|T|$$ while
$$\left|\left\{t^m \mid t \in S\right\}\right| \le \frac{1}{2}|T|.$$

Let $s \in S$, then the centralizer of $s$ is $T$. If $g \in
\SL_n(\Z/p\Z)$ and $gsg^{-1}\in T$ then $gsg^{-1}$ is also regular
so its centralizer is also $T$. On the other hand, conjugation by
$g$ is an automorphism, so the centralizer of $gsg^{-1}$ is
$gTg^{-1}$. Thus, $T=gTg^{-1}$ and $g$ belongs to the normalizer $N$
of $T$. The normalizer $N$ is the set of monomial matrices, so $N/T$
is isomorphic to the symmetric group on $n$ elements and $[N:T]=n!$.

Let $R$ be a set of representatives of the left cosets of $N$. Then,
$$|R|=\frac{1}{n!}[\SL_n(\Z/p\Z):T].$$ If
$r_1,r_2\in R$ are distinct then $r_1Sr_1^{-1}$ and $r_2Sr_2^{-1}$
are disjoint. Thus for $\bar{S}:=\cup_{r \in R} rSr^{-1}$,
$$|\bar{S}|\ge \frac{2}{3}|T|\cdot\frac{1}{n!}\frac{|\SL_n(\Z/p\Z)|}{|T|}
= \frac{2}{3n!}|\SL_n(\Z/p\Z)|$$ while $$|\{s^m \mid s\in
\bar{S}\}|\le
\frac{1}{2}|T|\cdot\frac{1}{n!}\frac{|\SL_n(\Z/p\Z)|}{|T|}=
\frac{1}{2n!}|\SL_n(\Z/p\Z)|.$$
\end{proof}

Combining Lemma \ref{powers lemma} with corollary \ref{arithmetic}
we get:
\begin{corol}\label{power corol} There exist two positive constants $\gamma$ and $r$ such that if
$k,m\in \N^+$ with $k \ge r$ and $2 \le m \le k^2$ then $$\prob(w_k
\in \Gamma^m) \le e^{-\gamma k}.$$
\end{corol}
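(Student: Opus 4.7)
The plan is to combine Lemma \ref{powers lemma} (which supplies the required local non-surjectivity with a uniform constant $c = \tfrac{1}{6n!}$) with Proposition \ref{arithmetic} (which converts such local data into an exponential bound), applied separately for each fixed $m$ in the range $2 \le m \le k^2$. The main point to be careful about is that the output constants $\gamma, r$ produced by Proposition \ref{arithmetic} must be uniform in $m$; fortunately the statement of that proposition already asserts dependence only on $\Gamma$, $\Sigma$, and $c$, and our $c = \tfrac{1}{6n!}$ is a function of $n$ alone.

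First I would fix $m$ in the specified range and translate Lemma \ref{powers lemma} to a statement about $\pi_p(\Gamma^m)$. By the Strong Approximation theorem (Theorem \ref{Strong}), for all sufficiently large primes $p$ we have $\pi_p(\Gamma) = \SL_n(\Z/p\Z)$, so $\pi_p(\Gamma^m) = \{g^m \mid g \in \SL_n(\Z/p\Z)\}$, and hence Lemma \ref{powers lemma} yields
\[
|\pi_p(\Gamma^m)| \le \Bigl(1-\tfrac{1}{6n!}\Bigr)|\SL_n(\Z/p\Z)|
\]
whenever $p \ge 3\binom{n}{2}+1$ and $p \equiv 1 \pmod m$.

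Next I would exhibit an arithmetic progression capturing exactly these primes and fitting the input format of Proposition \ref{arithmetic}. Set $b := m$ and $a := m+1$; then $\gcd(a,b) = \gcd(m+1,m) = 1$ and the progression $(a + bj)_{j\ge 1}$ consists of integers $\equiv 1 \pmod m$ that are at least $2m+1$. Since $m \le k^2$, we have $b \le k^2$ and $a \le k^2+1$; for $k$ large enough (absorbing the harmless $+1$ into the threshold $r$, or equivalently restricting slightly the range of $m$) both lie in $[2,k^2]$, so the hypotheses of Proposition \ref{arithmetic} are met with constant $c = \tfrac{1}{6n!}$.

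Proposition \ref{arithmetic} then delivers positive constants $\gamma$ and $r$, depending only on $\Gamma$, $\Sigma$, and $c$ (hence only on $\Gamma$, $\Sigma$, and $n$), such that $\prob(w_k \in \Gamma^m) \le e^{-\gamma k}$ for all $k \ge r$. Crucially, because the only way $m$ enters the input is through the specific AP $(a,b) = (m+1,m)$, and the conclusion of Proposition \ref{arithmetic} is insensitive to which particular AP of integers bounded by $k^2$ is used, the same $\gamma$ and $r$ work for every $2 \le m \le k^2$. The main subtlety I expect is precisely this uniformity: one must verify that the chain of inequalities in the proof of Proposition \ref{arithmetic} (through Corollary \ref{qdl}, Theorem \ref{SG-V}, and the GLS bound) produces constants that do not degrade as $m$ grows with $k$. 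This is built into the formulation, since $a,b \le k^2$ already absorbs the full $m$-dependence into the exponent $t = 2$ of Corollary \ref{qdl}.
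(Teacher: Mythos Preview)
Your proposal is correct and follows exactly the paper's approach, which is simply stated as ``Combining Lemma \ref{powers lemma} with Corollary \ref{arithmetic}.'' The only cosmetic wrinkle is ensuring the arithmetic progression avoids the finitely many primes below $3\binom{n}{2}+1$ (your choice $a=m+1$, $b=m$ allows small primes when $m$ is small), but this is easily absorbed either by enlarging $a$ by a bounded multiple of $m$ or by noting that the proof of Proposition \ref{arithmetic} only ever uses primes in the sub-progression modulo $bq_3$, all of which exceed any fixed bound.
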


The next lemma shows what kind of $m$-powers are possible at the
$k^{\text{th}}$-step of a random walk.
\begin{lemma}\label{two parts}
There is a constant $s \in \mathbb{N}^+$ such that if the
$k^{\text{th}}$-step $w_k$ of a random walk on $\Cay(\Gamma,\Sigma)$
is a proper power then one of the following holds:
\begin{itemize}
\item $w_k$ is virtually-unipotent.
\item $w_k=g^m$ for some element $g\in G$ and some prime number $m\le sk$.
\end{itemize}
\end{lemma}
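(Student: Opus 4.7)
The plan is to exploit the tension between two bounds on the eigenvalues of $w_k$: an upper bound from the norm growth along a walk of length $k$, and a lower bound from the fact that $w_k=g^m$ has eigenvalues that are $m$-th powers of algebraic integers of degree bounded by $n$. Set $C:=\max_{s\in\Sigma}\|s\|$ in a fixed submultiplicative matrix norm; then $w_k$, being a product of $k$ elements of $\Sigma$, satisfies $\|w_k\|\le C^k$, so every eigenvalue of $w_k$ has absolute value at most $C^k$. Since $w_k\in\SL_n(\Z)$, each eigenvalue of $w_k$ is an algebraic integer of degree $\le n$, and every Galois conjugate of such an eigenvalue is itself an eigenvalue of $w_k$ (the minimal polynomial divides the integer characteristic polynomial), so is likewise bounded by $C^k$ in absolute value.

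Assuming $w_k$ is a proper power, write $w_k=g^m$ with $g\in\Gamma$ and $m\ge 2$. Pick any prime divisor $p\mid m$ and set $y:=g^{m/p}\in\Gamma$, so that $w_k=y^p$. It suffices to show that if $w_k$ is not virtually unipotent then $p\le sk$ for some constant $s=s(\Sigma,n)$. Suppose $w_k$ is not virtually unipotent; then some eigenvalue $\lambda$ of $w_k$ is not a root of unity. Writing $\lambda=\mu^p$ for an eigenvalue $\mu$ of $y$, we have $\mu$ an algebraic integer of degree at most $n$ (since $y\in\SL_n(\Z)$), and $\mu$ is not a root of unity (otherwise $\lambda$ would be).

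Now invoke the absolute logarithmic Weil height $\mathrm{ht}(\cdot)$, which satisfies $\mathrm{ht}(\mu^p)=p\cdot\mathrm{ht}(\mu)$. By Northcott's theorem combined with Kronecker's, there is a constant $\mathrm{ht}_0=\mathrm{ht}_0(n)>0$ such that every algebraic integer of degree $\le n$ which is not a root of unity has $\mathrm{ht}(\mu)\ge\mathrm{ht}_0$. Hence $\mathrm{ht}(\lambda)=p\cdot\mathrm{ht}(\mu)\ge p\cdot\mathrm{ht}_0$. Conversely, since every Galois conjugate of $\lambda$ has absolute value $\le C^k$, the Mahler measure of $\lambda$ satisfies $\log M(\lambda)\le k\cdot\deg(\lambda)\cdot\log C$, whence $\mathrm{ht}(\lambda)=\frac{1}{\deg(\lambda)}\log M(\lambda)\le k\log C$. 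Combining, $p\le(\log C/\mathrm{ht}_0)\,k$, so the choice $s:=\lceil\log C/\mathrm{ht}_0\rceil$ works and depends only on $\Sigma$ and $n$.

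The only subtle point is the existence of the bound $\mathrm{ht}_0(n)>0$: this is \emph{not} the open Lehmer problem, which asks for a bound independent of degree. For fixed $n$ it is immediate, since minimal polynomials of algebraic integers of degree $\le n$ and Mahler measure $\le B$ have integer coefficients bounded in terms of $B$ and $n$, yielding a finite set; Kronecker's theorem identifies the ones of height $0$ as roots of unity, and the remaining finite set of non-roots-of-unity has positive minimum height. This ingredient, together with the spectral radius estimate, is precisely the scheme of Lubotzky--Mozes--Raghunathan alluded to in \cite{LMR}.
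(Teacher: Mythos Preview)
Your proof is correct and follows essentially the same approach as the paper: both exploit the norm bound $\|w_k\|\le C^k$ together with the fact that an algebraic integer of degree $\le n$ which is not a root of unity is quantitatively bounded away from the unit circle. The only cosmetic difference is that the paper phrases this gap directly as ``some conjugate has absolute value $>1+\varepsilon$'' (citing \cite{Mi}) and applies it to $g$ to get $\|g^m\|\ge(1+\varepsilon)^m$, whereas you package the same fact via the Weil height and Northcott finiteness; your explicit reduction to a prime exponent $p\mid m$ is also a nice touch that the paper leaves implicit.
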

\begin{proof}
For $x \in \C^n$ let $|x|$ be $L^2$-norm of $x$. Recall that the
operator norm of an element $g \in \Gamma$ is
$||g||:=\max_{|x|=1}|gx|$. Note that if $g,h \in \Gamma$ and
$\lambda$ is an eigenvalue of $g$ then $||gh|| \le ||g||||h||$ and
$||\lambda|| \le ||g||$. Define $c:=\max_{g \in \Sigma}||g||$ so
$||w_k|| \le c^k$ for every walk $w$.

If a polynomial $f$ of degree $n$ with integer coefficients is not a
product of cyclothymic polynomials then it has a root with absolute
value greater then $1+\varepsilon$ where $\varepsilon$ depends only
on $n$ (see for example Proposition 5.5 and Corollary 5.6 of
\cite{Mi}). Thus, if $g \in \SL_n(\Z)$ is not virtually unipotent
then it has an eigenvalue $\lambda$ with absolute value greater then
$1+\varepsilon$ so $||g^m||\ge (1+\varepsilon)^m$ for every $m\in
\N$.

A power of a virtually unipotent element is virtually unipotent.
Hence, if the the $k^{\text{th}}$-step $w_k$ of a walk is an
$m$-power but not virtually unipotent then $(1+\varepsilon)^m \le
c^k$ so $m \le sk$ for $s:=\frac{\log c}{log (1+\varepsilon)}$.
\end{proof}

Our goal is to show that the set of proper powers in $\Gamma$ is
exponentially small. The set of virtually unipotent elements in
$\Gamma$ is exponentially small by Corollary \ref{unipotent}. Thus,
Lemma \ref{two parts} implies that it is enough to show that the set
$Z \subseteq \Gamma$ of elements which are proper powers but not
virtually unipotent is exponentially small. Let $s$ be as in Lemma
\ref{two parts} and let $\gamma$ and $r$ be as in \ref{power corol}.
If $k \ge \max (r,s)$ then
$$\prob(w_k \in Z)=\prob(w_k \in \cup_{2
\le m \le k^2} \Gamma^m) \le k^2e^{-\gamma k}.$$  If $\alpha$ is a
positive constant smaller than $\gamma$ then for large enough $k$,
$$\prob(w_k \in Z) \le k^2e^{-\gamma k} \le e^{-\alpha k}.$$
Thus, $Z$ is indeed exponentially small and the proof is complete.

\section{Proof of Theorem A}\label{gen}

\subsection{Reduction}

The next lemma shows that it is enough to prove Theorem A for a
finitely generated subgroup of $\GL_n(\Q)$ such that the connected
component of its Zariski-closure is a non-trivial semisimple group.
\begin{lemma}\label{reduction}
Let $\Gamma \le \GL_m(\C)$ be a finitely generated group which is
not virtually solvable. Then there is a positive integer $n$ and a
homomorphism $\alpha:\Gamma \rightarrow \GL_n(\Q)$ such that the
connected component of the Zariski-closure of $\alpha(\Gamma)$ is
semisimple.
\end{lemma}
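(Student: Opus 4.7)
My plan is to reduce in three stages: (i) specialize the matrix entries of $\Gamma$ to lie in $\bar\Q$ while preserving non-virtual solvability, (ii) descend further from $\bar\Q$ to $\Q$ via Weil restriction of scalars, and (iii) kill the solvable radical of the Zariski closure to force a semisimple identity component.

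For stage (i), I would fix generators $g_1,\dots,g_r$ of $\Gamma$ and let $A\subseteq\C$ be the finitely generated $\Z$-subalgebra generated by the entries of the $g_i^{\pm 1}$, so that $\Gamma\le\GL_m(A)$. By the Tits alternative (applicable since $\Gamma$ is linear in characteristic $0$ and not virtually solvable), $\Gamma$ contains a rank-$2$ free subgroup $F=\langle a,b\rangle$. For each nontrivial reduced word $w$, some entry of $w(a,b)-I$ is a nonzero element $f_w\in A$. The plan is to produce a ring homomorphism $\phi\colon A\to\bar\Q$ with $\phi(f_w)\neq 0$ for every $w$; one does this by picking a transcendence basis $t_1,\dots,t_k$ of $\mathrm{Frac}(A)/\Q$ and substituting sufficiently generic algebraic numbers for the $t_i$ so that none of the countably many $f_w$ specializes to zero. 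The induced homomorphism $\phi_\ast\colon\Gamma\to\GL_m(\bar\Q)$ is then injective on $F$, so $\phi_\ast(\Gamma)$ contains a non-abelian free subgroup and in particular is not virtually solvable.

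For stage (ii), the image $\phi_\ast(\Gamma)$ lies in $\GL_m(K)$ for some number field $K$, and Weil restriction of scalars provides a $\Q$-rational closed embedding $\GL_m(K)\hookrightarrow\GL_N(\Q)$ with $N=m[K:\Q]$, giving $\beta\colon\Gamma\to\GL_N(\Q)$ whose image is still non-virtually-solvable. For stage (iii), I would let $\mathbf H\le\GL_N$ be the $\Q$-Zariski closure of $\beta(\Gamma)$ and $\mathbf H^\circ$ its identity component; the solvable radical $R$ of $\mathbf H^\circ$ is characteristic in $\mathbf H^\circ$, hence normal in $\mathbf H$, and is defined over $\Q$, and $\mathbf H/R$ is a linear algebraic $\Q$-group with non-trivial semisimple identity component $\mathbf H^\circ/R$. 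Fixing a faithful $\Q$-rational embedding $\mathbf H/R\hookrightarrow\GL_n$ and composing, I set
\[
\alpha\colon\Gamma\xrightarrow{\beta}\mathbf H(\Q)\twoheadrightarrow(\mathbf H/R)(\Q)\hookrightarrow\GL_n(\Q).
\]
Since $\beta(\Gamma)$ is Zariski-dense in $\mathbf H$, $\alpha(\Gamma)$ is Zariski-dense in $\mathbf H/R$, so the identity component of the Zariski closure of $\alpha(\Gamma)$ is the semisimple group $\mathbf H^\circ/R$, as required.

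The only non-formal step is the existence of the specialization $\phi$ in stage (i): one must simultaneously arrange $\phi(f_w)\neq 0$ for all countably many $w$. The standard transcendence/Hilbert-irreducibility argument sketched above handles this, and is the one genuinely technical input in the plan; everything else (Weil restriction, the radical quotient, Zariski-density under surjective algebraic morphisms) is formal.
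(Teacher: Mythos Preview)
Your proof is correct and follows essentially the same route as the paper: reduce from $\GL_m(\C)$ to $\GL_N(\Q)$ while preserving non-virtual-solvability, then quotient by the solvable radical of the Zariski closure to force a semisimple identity component. The only difference is that the paper outsources your stages (i) and (ii) to a single citation (Proposition~16.4.13 of Lubotzky--Segal, \emph{Subgroup Growth}), whereas you unpack this specialization-plus-restriction-of-scalars argument explicitly; your stage (iii) is identical to the paper's second step.
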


\begin{proof} Proposition $16.4.13$ of \cite{LuSe} shows that there is
an $n_1\in \N^+$ and a homomorphism $\alpha_1:\Gamma \rightarrow
\GL_{n_1}(\Q)$ such that $\alpha(\Gamma)$ is not a virtually
solvable group.  The connected component of the Zariski-closure of
$\alpha(\Gamma)$ is not necessarily semisimple. However, we can
divide it by its solvable radical, which is defined over $\Q$. Thus,
there is $n_2\in \N^+$ and a homomorphism $\alpha_2:\alpha_1(\Gamma)
\rightarrow \GL_{n_2}(\Q)$ such that the connected component of the
Zariski-closure of $\alpha_2\circ\alpha_1(\Gamma)$ is semisimple.
Define $\alpha:=\alpha_2\circ\alpha_1$ and $n:=n_2$.
\end{proof}

\subsection{Virtually unipotent elements}

We start this section with an analog of Proposition \ref{subvarity}.

\begin{prop}\label{11111} Let $\Gamma$ be a finitely generated subgroup of $\GL_n(\Q)$
whose Zariski-closure $\bar{\Gamma}$ is semisimple. Assume that
$V(\C)$ is a variety defined over $\Q$ and that $V(\C)$ does not
contain any coset of the connected component of $\bar{\Gamma}$.
Then, $V(\C)\cap \Gamma$ is exponentially small.
\end{prop}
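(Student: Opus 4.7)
\emph{Plan.} The plan is to adapt the proof of Proposition \ref{subvarity}, combining the structural result Proposition \ref{structre} with the sieve Proposition \ref{arithmetic2}, and reducing matters to a Lang--Weil estimate translate by translate. First I would invoke Proposition \ref{structre} to obtain a normal finite-index subgroup $\Lambda \le \Gamma$, a cofinite set of primes $\mathcal{P}$, epimorphisms $\pi_p\colon \Gamma \twoheadrightarrow \Gamma_p$ with kernels $N_p \subseteq \Lambda$, and constants $d,l,s$ with $|\Lambda_p| \ge \tfrac{1}{s}p^d$. Since $N_p \subseteq \Lambda$ one has $\pi_p^{-1}(\Lambda_p) = \Lambda$, so every coset of $\Lambda_p$ in $\Gamma_p$ lifts to a coset $g\Lambda$ with $g \in \Gamma$. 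For each of the finitely many coset representatives $g$, the translate $g^{-1}V$ is a $\Q$-subvariety of $\GL_n$, and the hypothesis that $V$ contains no coset of $\bar{\Gamma}^\circ$ forces $g^{-1}V \not\supseteq \bar{\Gamma}^\circ$; hence $W_g := g^{-1}V \cap \bar{\Gamma}^\circ$ is a proper $\Q$-subvariety of the connected semisimple group $\bar{\Gamma}^\circ$, of dimension at most $d-1$.

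For $p \in \mathcal{P}$ large enough, $\pi_p$ factors through the residue map $\Gamma \cap \bar{\Gamma}^\circ \to \bar{\Gamma}^\circ(\F_p)$ composed with a quotient whose order is bounded independently of $p$, so
\[
\pi_p(V \cap g\Lambda) \;=\; \pi_p(g)\,\pi_p(g^{-1}V \cap \Lambda)
\]
is contained in the image of $W_g(\F_p)$ in $\Lambda_p$. By the Lang--Weil estimates $|W_g(\F_p)| \le C_g\, p^{d-1}$ for all sufficiently large $p \in \mathcal{P}$; and since there are only finitely many coset representatives, maximizing over them yields a uniform constant $C$ and threshold $p_0$. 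Comparing with $|\Lambda_p| \ge p^d/s$ gives
\[
|\pi_p(V \cap \Gamma) \cap \pi_p(g)\Lambda_p| \;\le\; \frac{sC}{p}\,|\Lambda_p| \;\le\; \tfrac{1}{2}|\Lambda_p|
\]
for all $p \ge \max(p_0, 2sC)$. Choosing coprime integers $a,b \ge 2$ so that every prime of $(a+bj)_{j \ge 1}$ lies in $\mathcal{P}$ and exceeds this threshold, Proposition \ref{arithmetic2} applied with $c = \tfrac{1}{2}$ then delivers positive constants $\gamma, r$ with $\prob(w_k \in V \cap \Gamma) \le e^{-\gamma k}$ for every $k \ge r$, establishing that $V(\C) \cap \Gamma$ is exponentially small.

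The main obstacle I expect is the identification in the previous paragraph of $\pi_p(V \cap g\Lambda)$ with (the image in $\Lambda_p$ of) $W_g(\F_p)$. Since $\pi_p$ is built in the proof of Proposition \ref{structre} by passing through the simply-connected cover $\tilde{\mathrm{G}}$ and then dividing by its center, one has to keep careful track of how $V \cap g\Lambda$ lifts through this factorization so that the Lang--Weil count on $W_g$ descends cleanly to a bound on the $\Lambda_p$-image, rather than merely to a bound inside $\bar{\Gamma}^\circ(\F_p)$. Once this bookkeeping is in place, the remainder is a direct sieve-theoretic computation essentially identical to that of Proposition \ref{subvarity}.
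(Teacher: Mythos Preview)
Your argument is correct, but it takes a heavier route than the one the paper has in mind. The paper says the proof is ``almost identical'' to that of Proposition~\ref{subvarity}, and that proof does \emph{not} use the sieve at all: for each $k$ one chooses a \emph{single} prime $p$ of size roughly $e^{\delta k}$, applies Lang--Weil to see that the image of $V$ in the mod-$p$ quotient has density $O(1/p)$ (not merely $\le 1-c$), and then invokes Corollary~\ref{property tau corol} directly to conclude that $\prob(\pi_p(w_k)\in \pi_p(V))$ is $O(e^{-\delta k})$. In the present setting one replaces Corollary~\ref{tau corol} by the data supplied by Proposition~\ref{structre} (property-$\tau$ for the family $\{N_p\}$, the size bound $|\Gamma_p|\le s p^d[\Gamma:\Lambda]$, and the coset-by-coset Lang--Weil count you describe), but otherwise the argument is the same one-prime computation.

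Your approach instead feeds the Lang--Weil bound into Proposition~\ref{arithmetic2}, throwing away the fact that the density $|\pi_p(V\cap\Gamma)\cap C|/|\Lambda_p|$ actually tends to $0$ and retaining only that it is eventually $\le \tfrac12$. This works, and the bookkeeping you flag (tracking $W_g$ through the simply-connected cover and the central quotient) is exactly the same in both approaches, since the fibers of those maps have size bounded independently of $p$. The cost of your route is that you invoke the full large-sieve machinery and the quantitative Dirichlet theorem hidden in Proposition~\ref{arithmetic2}, neither of which is needed here; the benefit is that once Proposition~\ref{arithmetic2} is available the verification is mechanical. The paper's single-prime argument is shorter and more in keeping with the spirit of Proposition~\ref{subvarity}.
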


The proof of Proposition \ref{11111} is almost identical to the one
of Proposition \ref{subvarity} so we omit it. The next Corollary is
the main result of this subsection.

\begin{corol}\label{234454} Let $\Gamma$ be a finitely generated subgroup of $\GL_n(\Q)$
whose Zariski-closure is semisimple. Then, the set of virtually
unipotent elements is exponentially small.
\end{corol}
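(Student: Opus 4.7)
My plan is to mirror the proof of Corollary \ref{unipotent}, simply replacing Proposition \ref{subvarity} with its generalization Proposition \ref{11111}. As in Corollary \ref{unipotent}, the eigenvalues of any virtually unipotent $g \in \GL_n(\Q)$ are roots of unity algebraic of degree at most $n$ over $\Q$, of which there are only finitely many, so there exists an integer $m = m(n) \ge 2$ such that $g^m$ is unipotent for every virtually unipotent $g$. The set $V(\C) := \{g \in \GL_n(\C) \mid g^m \text{ is unipotent}\}$ is then a subvariety of $\GL_n(\C)$ defined over $\Z$ which contains every virtually unipotent element of $\Gamma$. By Proposition \ref{11111}, it suffices to verify that $V(\C)$ contains no coset of the connected component $\bar\Gamma^\circ$ of the Zariski closure.

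To do so I would argue by contradiction: suppose $h\bar\Gamma^\circ \subseteq V(\C)$ for some $h$. Every element of this coset then has all eigenvalues among the $m$-th roots of unity, so its characteristic polynomial lies in a finite set. The map sending $g \in \bar\Gamma^\circ$ to the coefficient vector of the characteristic polynomial of $hg$ is a morphism $\bar\Gamma^\circ \to \C^n$, and since $\bar\Gamma^\circ$ is irreducible, its image in this finite set reduces to a single point. Hence the characteristic polynomial of $hg$ is constant on the coset $h\bar\Gamma^\circ$; in particular $\text{tr}((hg)^k) = \text{tr}(h^k)$ for every $g \in \bar\Gamma^\circ$ and every $k \ge 1$.

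The main obstacle is to rule out this constancy using the semisimplicity of $\bar\Gamma^\circ$. I would proceed via a maximal torus $T$ of $\bar\Gamma^\circ$, which has positive dimension since $\bar\Gamma^\circ$ is a non-trivial connected semisimple algebraic group. Decomposing $\C^n$ into weight spaces of $T$ and writing $h$ in block form with respect to that decomposition, each identity $\text{tr}((ht)^k) = \text{tr}(h^k)$ becomes a polynomial identity in the characters $\chi(t)$. The linear independence of distinct characters of $T$, together with the faithfulness of the embedding $\bar\Gamma^\circ \hookrightarrow \GL_n$ (which forces $T$ to act through some non-trivial character), converts these into an overdetermined system of polynomial constraints on the blocks of $h$, incompatible with $h \in \GL_n$.

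Once the hypothesis of Proposition \ref{11111} has been verified in this way, that proposition delivers that $V(\C) \cap \Gamma$ is exponentially small; a fortiori the set of virtually unipotent elements of $\Gamma$ is exponentially small.
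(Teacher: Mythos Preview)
Your reduction to Proposition~\ref{11111} is exactly the paper's strategy, and your first two paragraphs are fine: once one knows the characteristic polynomial of $hg$ is constant on the coset $h\bar\Gamma^\circ$, the task is precisely to rule this out. The gap is in your third paragraph, where you restrict attention to $g = t \in T$ and claim that the resulting trace identities are ``incompatible with $h \in \GL_n$''. They need not be. Take $\bar\Gamma^\circ = \SL_2(\C)$ in its standard representation, $T$ the diagonal torus, and
\[
h = \begin{pmatrix} 0 & 1 \\ -1 & 0 \end{pmatrix}, \qquad t = \begin{pmatrix} \lambda & 0 \\ 0 & \lambda^{-1}\end{pmatrix}.
\]
Then $ht = \left(\begin{smallmatrix} 0 & \lambda^{-1} \\ -\lambda & 0 \end{smallmatrix}\right)$ has characteristic polynomial $x^2+1$ for every $\lambda$, so $\mathrm{tr}((ht)^k)$ is constant in $t$ for all $k$. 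In your block language the diagonal blocks $h_{\chi,\chi}$ vanish and the off-diagonal blocks carry all of $h$; the ``overdetermined system'' is perfectly consistent with $h$ invertible. Thus constancy on $hT$ alone, which is all your weight-space argument uses, does not force a contradiction --- even though $T$ is positive-dimensional and acts through non-trivial characters. The phenomenon is that $h$ can swap weight spaces, and this kills the diagonal contribution to the trace.

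What is genuinely needed is a representative of the coset that \emph{commutes} with a non-trivial subtorus $S$; then the eigenvalues of $hs$ move with $s \in S$ and the characteristic polynomial cannot stay constant. Producing such a representative is the content of the paper's Lemma~\ref{proper subvariety lemma}: one passes to $\Aut(\bar\Gamma^\circ)$, uses that every coset of $\inn(\bar\Gamma^\circ)$ contains a graph automorphism, and checks case-by-case on Dynkin diagrams that each graph automorphism fixes some root and hence pointwise fixes a positive-dimensional torus. This is exactly the step your sketch is missing, and it does require some structure theory beyond linear independence of characters.
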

\begin{proof}
As in the proof proof of Corollary \ref{unipotent}, there exists a
positive integer $t$ such that if $g \in \Gamma$ is virtually
unipotent then $g^t$ is unipotent. Thus, the set of virtually
unipotent elements is contained in a subvariety defined over $\Q$.
Proposition \ref{subvarity} above and Proposition \ref{proper
subvariety lemma} below complete the proof.
\end{proof}

\begin{lemma}\label{proper subvariety lemma} Let $t \in \N^+$.
Let $\Gamma$ be a finitely generated subgroup of $\GL_n(\Q)$ whose
Zariski-closure in $\GL_n(\C)$, denoted by $\bar{\Gamma}$, is a
semisimple group. Then every coset of the identity component
$\bar{\Gamma}^\circ$ of $\bar{\Gamma}$ contains an element whose
$t$-power is not unipotent.
\end{lemma}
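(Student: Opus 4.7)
The plan is to argue by contradiction. Suppose every $c$ in the coset $C:=g\bar{\Gamma}^\circ$ satisfies $c^t$ unipotent, and write $H:=\bar{\Gamma}^\circ$, a positive-dimensional connected semisimple algebraic group. Since unipotent elements of $\bar{\Gamma}$ lie in the identity component $H$, the assumption forces $g^t\in H$. Let $g=g_s g_u$ be the Jordan decomposition of $g$ in $\bar{\Gamma}$; the unipotent part $g_u$ lies in $H$, so $g_s=gg_u^{-1}\in gH=C$. Applying the hypothesis to $g_s\in C$ we see that $g_s^t$ is unipotent, but it is also semisimple (a power of a semisimple element), hence $g_s^t=e$. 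After replacing $g$ by $g_s$, we may assume that $g$ is semisimple and satisfies $g^t=e$.

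Because $H$ is normal in $\bar{\Gamma}$, conjugation by $g$ restricts to an algebraic automorphism $\tau:H\to H$, $\tau(h)=ghg^{-1}$; and since $g^t=e$, we have $\tau^t=\id$, so $\tau$ has finite order dividing $t$. The key structural input, and what I expect to be the main hurdle, is the classical fact (Steinberg; see also Kac's classification of finite-order automorphisms of semisimple Lie algebras) that a finite-order automorphism of a positive-dimensional connected semisimple algebraic group in characteristic zero has a reductive fixed subgroup whose identity component $(H^\tau)^\circ$ has positive dimension. Granting this, choose a maximal torus $T'$ of $(H^\tau)^\circ$; then $T'$ has positive dimension, and because the inclusion $T'\hookrightarrow\GL_n(\C)$ is faithful, at least one non-trivial character $\chi$ of $T'$ appears as a weight of the action on $\C^n$.

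Since the character lattice $X^*(T')$ is torsion-free, $\chi^t$ is also a non-trivial character of $T'$, so the set $\{h\in T':\chi(h)^t=1\text{ for every weight }\chi\text{ of }T'\text{ on }\C^n\}$ is a proper closed subgroup of $T'$. A generic $h\in T'$ therefore has an eigenvalue $\chi(h)$ with $\chi(h)^t\neq 1$, and so $h^t$ is not unipotent. On the other hand, $h\in H^\tau$ commutes with $g$, so $(gh)^t=g^t h^t=h^t$, and $gh\in C$ is an element whose $t$-th power is not unipotent. This contradicts the assumption $C\subseteq\{c\in\GL_n(\C):c^t\text{ is unipotent}\}$, completing the proof. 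Apart from the cited structural result on finite-order automorphisms, all remaining steps are routine manipulations of the Jordan decomposition, the normality of $\bar{\Gamma}^\circ$ in $\bar{\Gamma}$, and the identity $(gh)^t=g^t h^t$ valid for commuting elements.
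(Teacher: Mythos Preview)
Your proof is correct and takes a genuinely different route from the paper's argument.

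Both proofs share the same endgame: find a positive-dimensional torus $T'$ centralized by a representative $g$ of the coset, and then observe that for generic $h\in T'$ the element $gh\in C$ has $(gh)^t$ non-unipotent. The difference lies in how each reaches that torus. You use the Jordan decomposition to replace $g$ by its semisimple part $g_s\in C$ (using that unipotent elements lie in $\bar{\Gamma}^\circ$), deduce $g_s^t=e$ directly from the hypothesis, and then invoke the Steinberg/Kac structural fact that a finite-order automorphism of a positive-dimensional connected semisimple group has positive-dimensional fixed subgroup. The paper instead argues indirectly: it passes to the adjoint quotient, shows (via an irreducibility argument on characteristic polynomials) that all elements of $C$ would have the same finite order, and then proves by explicit case analysis on Dynkin diagrams that every coset of $\inn(\bar{\Gamma}^\circ)$ in $\aut(\bar{\Gamma}^\circ)$ contains elements of unbounded order --- which amounts to checking by hand that each graph automorphism fixes some root, hence some one-dimensional torus.

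Your approach is cleaner and more conceptual: the Jordan-decomposition reduction is a one-line replacement for the paper's finiteness-of-Jordan-forms argument, and citing the Steinberg fixed-point theorem avoids the type-by-type verification (including the separate handling of $D_4$ and of permutations of simple factors). The cost is that you import a nontrivial structural theorem as a black box, whereas the paper's argument, though lengthier, is essentially self-contained. Both proofs tacitly assume $\dim\bar{\Gamma}^\circ>0$, which is the relevant case in the paper.
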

\begin{proof} By replacing $\Gamma$ with its image in
$\bar{\Gamma}^\circ/\mathrm{Z}(\bar{\Gamma}^\circ)$ we can assume
that $\bar{\Gamma}^\circ$ has trivial center. Let $C$ be some coset
of $\bar{\Gamma}^\circ$ and assume that for every $g \in C \cap
\Gamma$ the power $g^t$ is unipotent. Then the eigenvalues of every
element of $C \cap \Gamma$ are roots of unity of bounded order.
Thus, there is only a finite number of Jordan forms for the elements
of $C \cap \Gamma$. The set of elements with a given Jordan form is
Zariski-closed so all the elements of $C$ have the same Jordan form
since $C$ is irreducible and $C \cap \Gamma$ is dense in it. In
particular, all elements of $C$ have the same order.

The automorphism group of $\bar{\Gamma}^\circ$ is a semidirect
product of the group of inner automorphism and the finite group of
outer automorphisms. Hence, every coset of
$\inn(\bar{\Gamma}^\circ)$ in $\aut(\bar{\Gamma}^\circ)$ contains an
element of finite order. This implies that there are some $k \in
\N^+$ and $g \in C$ such that $g^k \in
\cen_{\bar{\Gamma}}(\bar{\Gamma}^\circ)$. However, this centralizer
is finite so $g$ has finite order. Thus, all the elements of $C$ has
the same finite order. In the next paragraph  we will show that the
orders of the elements in every coset of $\inn(\bar{\Gamma}^\circ)$
are unbounded so the same is true for $C$ and we get the desired
contradiction.

If $\bar{\Gamma}^\circ$ is a simple group of adjoint type then every
coset of $\inn(\bar{\Gamma}^\circ)$ in $\aut(\bar{\Gamma}^\circ)$
contains a graph automorphism $\alpha$. The graph automorphism
$\alpha$ fixes some root of the Dynkin diagram unless the diagram is
of type $A_{2n}$ and in that case the automorphism switches between
the two roots at the ends. In any case, $\alpha$ pointwise fixes
some torus $T$ and the orders of the elements of $\alpha T \subseteq
\alpha\inn(\bar{\Gamma}^\circ)$ are unbounded (we identified $T$
with its image in $\inn(\bar{\Gamma}^\circ)$).

In the general case $\bar{\Gamma}^\circ$ has a characteristic
subgroup $N$ such that $\bar{\Gamma}^\circ/N$ is isomorphic to
$\Lambda^k$ where $\Lambda$ is a simple group of adjoint type and $k
\in \N^+$. It suffices to show that for every $\alpha \in
\aut(\Lambda^k)$ the orders of the elements which belong to $\alpha
\inn (\Lambda^k)$ are unbounded. As before, this will follow once we
show that there is an element in $\alpha \inn (\Lambda^k)$ which
pointwise fixes a non trivial torus. As $\alpha \in
\aut(\Lambda^k)$, there are $\alpha_1,\cdots,\alpha_k \in
\aut(\Lambda)$ and a permutation $\sigma\in \textrm{Sym}(k)$ such
that for every $(x_1,\cdots,x_k)\in \Lambda^k$ we have
$$\alpha(x_1,\cdots,x_k)=(\alpha_1(x_{\sigma(1)}),\cdots,\alpha_k(x_{\sigma(k)})).$$

If the type of $\Lambda$ is different from $D_4$ then there is only
one non-identity graph automorphism, so by the previous paragraph we
see that there is a non-trivial torus $T$ which is pointwise fixed
by all graph automorphisms. If the type of $\Lambda$ is $D_4$ then
all the graph automorphisms fix the central root and thus pointwise
fix the torus $T$ corresponds to this root. By replacing $\alpha$
with another representative of the coset $\alpha\inn(\Lambda^k)$ we
can assume that $\alpha_i$ is a graph automorphism for every $1 \le
i \le k$ so $\alpha$ pointwise fixes $T^*:=\{(t,\cdots,t) \mid t \in
T\}$. As before the fact the the orders of $\alpha T^*$ are
unbounded implies that the orders of $\alpha \inn (\Lambda^k)$ are
unbounded.
\end{proof}

\subsection{Completion of the proof of Theorem A}

Let $\Gamma$ be a finitely generated subgroup of $\GL_n(\Q)$ whose
Zariski-closure $\bar{\Gamma}$ is semisimple. Then, there are
finitely many primes $p_1,\ldots,p_r$ such that $\Gamma$ is
contained in $\GL_n(\Z[\frac{1}{p_1\cdots p_r}])$. Fix an admissible
generating  multi-subset $\Sigma$ of $\Gamma$.

We start with some number theoretic arguments. For every $1 \le i
\le r$, let $|\cdot|_{i}$ be some extension of a $p_i$-adic
valuation of $\Q$ to the algebraic closure $\tilde{Q}$ of $\Q $. Fix
an embedding of $\tilde{Q}$ in $\C$ and let $|\cdot|_0$ be the
restriction  to $\tilde{\Q}$ of the absolute value of $\C$. It is
well known that for every $n\in\mathbb{N}^+$, there exists a
constant $c>1$ such that if $x $ is an algebraic integer of degree
at most $n$ then either $x$ is a root of unity or some Galois
conjugate  $y$ of $x$ satisfies  $|y|_0 \ge c$ (see Proposition 5.5
and Corollary 5.6 of \cite{Mi}). The following lemma is a
straightforward generalization of this fact.
\begin{lemma}\label{valuation lemma} Denote $R:=\Z[\frac{1}{q}]$
where $q:=p_1\cdots p_r$.  Then, for every $n\in\mathbb{N}^+$ there
exists a constant $c>1$ such that if $x \in \tilde{Q}^*$ is integral
over $R$ of degree at most $n$ then either $x$ is a root of unity or
$|y|_i\ge c$ for some $0 \le i \le r$ and some Galois conjugate $y$
of $x$.
\end{lemma}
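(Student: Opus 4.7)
The plan is to deduce the lemma from the absolute logarithmic Weil height on $\tilde{\Q}$ combined with the classical Kronecker--Northcott finiteness theorem. Let $K = \Q(x)$, $d = [K:\Q] \le n$, and $S = \{p_1,\ldots,p_r\}$. I use the standard normalization
$$h(x) = \sum_{w} \frac{[K_w : \Q_v]}{d}\,\log^+ |x|_w,$$
where the sum runs over all places $w$ of $K$, $v$ is the place of $\Q$ below $w$, and $|x|_w$ extends the usual absolute value on $\Q_v$. The local-degree identity $\sum_{w \mid v}[K_w:\Q_v] = d$ ensures that, for each place $v$ of $\Q$, the contribution to $h(x)$ from places above $v$ is at most $\max_{w\mid v}\log^+|x|_w$. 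Since each $|x|_w$ with $w \mid v$ equals $|y|_v$ for some Galois conjugate $y$ of $x$ (after extending $|\cdot|_v$ to $\tilde{\Q}$), a uniform bound $|y|_v < c$ on all conjugates yields a contribution of at most $\log c$ above $v$.

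Now assume toward a contradiction that $x$ is not a root of unity yet $|y|_i < c$ for every $i \in \{0, 1,\ldots, r\}$ and every conjugate $y$ of $x$. Because $x$ is integral over $R$, every conjugate $y$ is too (they share the minimal polynomial of $x$ over $\Q$), so $|y|_p \le 1$ for every prime $p \notin S$. Hence only the $r+1$ places $\infty, p_1,\ldots,p_r$ can contribute to $h(x)$, each by at most $\log c$, giving
$$h(x) \le (r+1)\log c.$$

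On the other hand, Kronecker's theorem states that $h(y)=0$ iff $y=0$ or $y$ is a root of unity, and Northcott's finiteness theorem then supplies a positive lower bound $h_0(n) > 0$ for $h(y)$, valid for all $y \in \tilde{\Q}^*$ of degree $\le n$ that are not roots of unity. Choosing
$$c := \exp\!\Bigl(\tfrac{h_0(n)}{2(r+1)}\Bigr) > 1,$$
which depends only on $n$ and on the fixed ring $R$, the two bounds on $h(x)$ collide, forcing $x$ to be a root of unity.

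The main technical point is purely bookkeeping: consistently normalizing the local absolute values so the local-degree identity delivers a clean per-place bound of $\log c$. No number-theoretic input beyond Kronecker--Northcott is required; the hypothesis of $R$-integrality is precisely what kills the contributions from primes outside $S$, and the assumed upper bound $c$ on the $r+1$ remaining places is directly converted into an upper bound on the height.
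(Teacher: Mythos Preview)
Your argument is correct. The combination of Kronecker's theorem and Northcott's finiteness does yield a uniform lower bound $h_0(n)>0$ on the height of non-roots of unity of degree at most $n$ (by applying Northcott with, say, $B=1$ and taking the minimum over the resulting finite set), and your bookkeeping with the local-degree identity is accurate. One small point you left implicit: to conclude that every conjugate $y$ is integral over $R$ from the fact that they share the minimal polynomial over $\Q$, you need that this minimal polynomial already has coefficients in $R$; this holds because $R$, being a localization of $\Z$, is integrally closed in $\Q$.

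Your route differs substantially from the paper's. The paper argues by a direct case split on the minimal polynomial $f\in R[t]$: if $f\in\Z[t]$ one is in the classical algebraic-integer situation and the archimedean bound applies; if some coefficient $b$ has a denominator divisible by $p_j$, then $|b|_j\ge |p_j|_j^{-1}$, and since $b$ is a symmetric function of degree at most $n$ in the conjugates and $|\cdot|_j$ is non-archimedean, some conjugate $y$ must satisfy $|y|_j\ge |p_j|_j^{-1/n}$. This is more elementary and yields an explicit constant, but it treats the archimedean and non-archimedean places asymmetrically. Your height-based approach is cleaner and treats all places uniformly, at the cost of invoking Northcott and producing a non-explicit constant.
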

\begin{proof} Let $x\in \tilde{Q}^*$ be integral over
$R$ such that its minimal polynomial $f$ over $R$ has degree at most
$n$. The roots of $f$ are the Galois conjugates of $x$. The case
where all the coefficients of $f$ belong to $\Z$ is the classical
case above. Thus, we can assume that some coefficient $b$ of $f$
does not belong to $\Z$. Define $d:=\min_{1\le i \le
r}|{p_i}|_{i}^{-\frac{1}{n}}$ and note that $d>1$. There is $1 \le j
\le r$ such that $p_j$ is a factor of the denominator of $b$ so
$|b|_j\ge d^n$. The coefficient $b$ is a symmetric polynomial of
degree at most $n$ in the Galois conjugates of $x$ and $|\cdot|_j$
is non-archimedean so there is at least one Galois conjugate $y$ of
$x$ with $|y|_j \ge d$.
\end{proof}

For $1 \le i \le r$ and $\bar{x}=(x_1,\cdots,x_n) \in \tilde{Q}^n$
define $$|\bar{x}|_i:=\max_{1 \le j \le n}|x_j|_i$$ and
$$|\bar{x}|_0:=\sqrt{\sum_{j=1}^r |x_j|_0^2}.$$
For an element $g \in \Gamma$ and $0 \le i \le r$, define:
$$|g|_i:=\max_{\bar{x}\ne 0}\frac{|g\bar{x}|_i}{|x|_i}.$$
Note that if $g,h \in \Gamma$ and $\lambda$ is an eigenvalue of $g$
then $|gh|_i \le |g|_i|h|_i$ and $|\lambda|_i \le |g|_i$.

\begin{lemma}\label{two parts2}
There is a constant $t \in \mathbb{N}^+$ such that for every
$k\in\mathbb{N}^+$ and every walk $w$ on $\Cay(\Gamma,\Sigma)$, if
$w_k$ is a proper power then one of the following holds:
\begin{itemize}
\item $w_k$ is virtually-unipotent.
\item $w_k=g^m$ for some element $g\in G$ and some prime number $m\le tk$.
\end{itemize}
\end{lemma}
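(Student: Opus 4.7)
The plan is to adapt the proof of Lemma \ref{two parts} by replacing the single archimedean operator norm with the family $\{|\cdot|_i\}_{0\le i\le r}$, and invoking Lemma \ref{valuation lemma} in place of the classical Kronecker-type estimate on the Mahler measure. Set
$$c := \max_{g \in \Sigma,\, 0 \le i \le r} |g|_i,$$
which is finite since $\Sigma$ is finite. The submultiplicativity $|gh|_i \le |g|_i|h|_i$ recorded just before Lemma \ref{two parts2} then gives $|w_k|_i \le c^k$ for every walk $w$ and every $0 \le i \le r$.

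Suppose now that $w_k = g^m$ with $m \ge 2$, and that $w_k$ is not virtually unipotent. Since every power of a virtually unipotent element is virtually unipotent, $g$ itself is not virtually unipotent, so there exists an eigenvalue $\nu$ of $g$ that is not a root of unity. Because $g \in \GL_n(R)$ with $R := \Z[\tfrac{1}{q}]$, the characteristic polynomial of $g$ is monic of degree $n$ with coefficients in $R$, so $\nu$ is integral over $R$ of degree at most $n$. Applying Lemma \ref{valuation lemma} to $\nu$, there is a Galois conjugate $\mu$ of $\nu$ and an index $0 \le i \le r$ with $|\mu|_i \ge c_0$, where $c_0 > 1$ is the constant depending only on $n$ furnished by that lemma.

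The characteristic polynomial of $w_k$ lies in $\Q[x]$, so its root set is Galois-stable. Since $\nu^m$ is an eigenvalue of $w_k = g^m$ and $\mu^m$ is a Galois conjugate of $\nu^m$, it follows that $\mu^m$ is also an eigenvalue of $w_k$. Using $|\lambda|_i \le |w_k|_i$ for any eigenvalue $\lambda$, we obtain
$$c_0^m \le |\mu|_i^{m} = |\mu^m|_i \le |w_k|_i \le c^k,$$
hence $m \le (\log c / \log c_0)\,k$. If $m$ is not prime, write $m = p m'$ with $p$ prime, so that $w_k = (g^{m'})^p$ is a $p$-th power with $p \le m$. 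Taking $t := \lceil \log c / \log c_0 \rceil$ completes the proof.

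The main thing to get right is the step identifying $\mu^m$ as an eigenvalue of $w_k$: it uses only that eigenvalues of $w_k \in \GL_n(\Q)$ form a Galois-stable set and that taking $m$-th powers commutes with the Galois action. Everything else is a direct transcription of the argument of Lemma \ref{two parts}, with the product $\prod_i |g|_i$ playing the role of a single operator norm, and Lemma \ref{valuation lemma} replacing the archimedean lower bound used there.
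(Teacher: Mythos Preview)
Your proof is correct and follows essentially the same approach as the paper's: both bound $|w_k|_i$ by $c^k$ via submultiplicativity, invoke Lemma \ref{valuation lemma} to find a Galois conjugate of an eigenvalue with absolute value at least $c_0>1$ at some place, and compare $c_0^m$ with $c^k$. The only cosmetic difference is that the paper observes directly that the Galois conjugate $\mu$ is itself an eigenvalue of $g$ (since the characteristic polynomial of $g$ has rational coefficients), whereas you pass through $w_k$; your version also makes explicit the trivial reduction to a prime exponent, which the paper's proof leaves implicit.
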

\begin{proof} Fix $k \in \mathbb{N}$ and define:
$$b:=\max_{g \in \Sigma\ \wedge\ 0 \le i \le r}|g|_i.$$

Then, $|w_k|_i \le b^k$ for every walk $w$ and every $0 \le i \le
r$. Let $g\in \Gamma$. If $\lambda$ is an eigenvalue of $g$ then so
are all the Galois conjugates of it. Lemma \ref{valuation lemma}
shows that there is a constant  $c>1$ such that if $g$ is not
virtually unipotent then $|\lambda|_i \ge c$ for some eigenvalue
$\lambda$ of $g$ and some $0 \le i \le r$. In the later case for
every $m\in \N^+$ we have that $\lambda^m$ is an eigenvalue of $g^m$
and $|g^m|_i \ge |\lambda^m|_i \ge c^m$. Thus, if $w$ is a walk on
$\Cay(\Gamma,\Sigma)$ and $w_k=g^m$ for some $g\in \Gamma$ and some
$m\in \N^+$ then either $w_k$ is virtually unipotent or $m \le tk$
where $t:=\frac{\log b}{\log c}$.
\end{proof}

\begin{lemma}\label{2434342323}  There exist two positive constants $\alpha$ and $r$ such that if
$k\ge r$  then $$\prob(w_k \in \cup_{2 \le m \le k^2}\Gamma^m) \le
e^{-\alpha k}.$$
\end{lemma}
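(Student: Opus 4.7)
The plan follows the template of Corollary \ref{power corol} from the special case. Fix $m$ with $2\le m\le k^2$; the goal is a bound $\prob(w_k\in\Gamma^m)\le e^{-\gamma k}$ in which $\gamma>0$ does not depend on $m$. Summing over the at most $k^2$ values of $m$ then gives
\[
\prob\Bigl(w_k\in\bigcup_{2\le m\le k^2}\Gamma^m\Bigr)\le k^2 e^{-\gamma k}\le e^{-\alpha k}
\]
for any $0<\alpha<\gamma$ and for all sufficiently large $k$, which is exactly what is required.

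The per-$m$ estimate will be produced by Proposition \ref{arithmetic2}, so I must construct, for each $m\le k^2$, coprime integers $2\le a_m,b_m\le k^5$ and a constant $c>0$, with $c$ independent of $m$, such that for every prime $p$ in the arithmetic progression $(a_m+b_m j)_{j\ge 1}$ and every coset $C$ of $\Lambda_p$ in $\Gamma_p$,
\[
|\pi_p(\Gamma^m)\cap C|\le (1-c)|\Lambda_p|.
\]
First apply Proposition \ref{structre} to $\Gamma$, obtaining the finite index normal subgroup $\Lambda$ of $\Gamma$, the structure constants $d,l,s\in\N^+$, and, for every prime $p$ in a cofinite set $\mathcal P$, the epimorphism $\pi_p:\Gamma\to\Gamma_p$ with $\Lambda_p=\pi_p(\Lambda)$ a direct product of at most $l$ finite simple groups of Lie type of rank at most $l$ over extensions of $\F_p$ of degree at most $l$. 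The parameters $(d,l,l)$ depend only on $\Gamma$. Next I invoke the quantitative form of Theorem C---i.e.\ Theorem \ref{power main4} of Section \ref{chap liegroups}---applied to these fixed parameters and to the exponent $m$; it produces coprime $a_m,b_m$ polynomial in $m$ (hence bounded by $k^5$ for $k$ large, since $m\le k^2$) together with a constant $c>0$ depending only on $(d,l,l)$, yielding exactly the coset estimate above. This plays the role in the general case that Lemma \ref{powers lemma} plays in the $\SL_n(\Z)$ case.

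Once the estimate is in place, Proposition \ref{arithmetic2} applied to $Z=\Gamma^m$ with the pair $(a_m,b_m)$ delivers constants $\gamma,r>0$ depending only on $\Gamma,\Sigma$ and $c$, and therefore independent of $m$, such that $\prob(w_k\in\Gamma^m)\le e^{-\gamma k}$ for all $k\ge r$. Summing over $2\le m\le k^2$ and choosing any $0<\alpha<\gamma$ then concludes the argument.

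The main obstacle is the quantitative Theorem \ref{power main4}: the whole scheme collapses unless the modulus $b_m$ of the arithmetic progression remains polynomial in $m$ \emph{and} the constant $c$ can be chosen uniformly in $m$. Producing both uniformities requires the detailed analysis of power maps and of the automorphism groups of finite almost simple groups of Lie type carried out in Section \ref{chap liegroups}; the appearance of cosets of $\Lambda_p$ in $\Gamma_p$ (rather than just the full image, as in the $\SL_n(\Z)$ case) is precisely what forces this extra work beyond Lemma \ref{powers lemma}.
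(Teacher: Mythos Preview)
Your overall scheme---obtain a uniform per-$m$ bound via Proposition~\ref{arithmetic2} and then sum over $2\le m\le k^2$---is exactly the paper's, but there is a genuine gap in how you verify the hypothesis of Proposition~\ref{arithmetic2}. You take $Z=\Gamma^m$ and assert that Theorem~\ref{power main4} yields
\[
|\pi_p(\Gamma^m)\cap C|\le (1-c)|\Lambda_p|
\]
for every coset $C\in\Gamma_p/\Lambda_p$. But Theorem~\ref{power main4} (and Corollary~\ref{done}) only bounds $|\{g^m:g\in D\}\cap C|$ for a \emph{single} coset $D$, whereas $\pi_p(\Gamma^m)\cap C=\bigcup_{D}D^m\cap C$ is a union over all cosets $D\in\Gamma_p/\Lambda_p$. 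Whenever $m$ shares a prime factor with $[\Gamma_p:\Lambda_p]$, several distinct $D$'s satisfy $D^m\subseteq C$, and nothing in Theorem~\ref{power main4} prevents their union from filling more than $(1-c)|\Lambda_p|$ of $C$. So the coset estimate you need does not follow.

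The paper repairs this by first fixing a coset $D\in\Gamma/\Lambda$ in the \emph{infinite} group and setting $Z=D^m:=\{g^m:g\in D\}$. Then $\pi_p(D^m)$ lies in a single coset of $\Lambda_p$, so Corollary~\ref{done} gives exactly the hypothesis of Proposition~\ref{arithmetic2} for this $Z$, yielding $\prob(w_k\in D^m)\le e^{-\gamma k}$ with $\gamma$ independent of $m$ and $D$. One then sums over $2\le m\le k^2$ and over the finitely many cosets $D\in\Gamma/\Lambda$ (finite because $[\Gamma:\Lambda]<\infty$), absorbing both factors into the choice of $\alpha<\gamma$. Your argument becomes correct once you insert this extra decomposition over $\Gamma/\Lambda$.
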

\begin{proof} We use the notation of Proposition \ref{structre}. For every
coset $D \in \Gamma/\Lambda$ and every $m\ge 2$ define $D^m:=\{g^m
\mid g \in D\}$. Proposition \ref{arithmetic2} together with
Corollary \ref{done} imply that there are positive constants
$\gamma$ and $r$ such that for every $k \ge r$ and for every coset
$D \in \Gamma/\Lambda $:
$$\prob(w_k\in \cup_{2 \le m \le k^2}D^m) \le k^2e^{-\gamma k}.$$
The subgroup $\Lambda$ is of finite index in $\Gamma$ so it has only
finitely many cosets. Hence, if $0 < \alpha < \gamma$ and $k$ is
large enough then:
$$\prob(w_k \in \cup_{2 \le m \le k^2}\Gamma^m) \le
e^{-\alpha k}.$$
\end{proof}

Corollary \ref{234454} together with Lemmas \ref{two parts2} and
\ref{2434342323} complete the proof of Theorem A (modulo the proof
of Theorem \ref{power main4} and corollary \ref{done}).

\section{Powers in finite groups of Lie Type}\label{chap liegroups}

The goal of this section to bound the number of powers in a finite
extension of a direct product of finite groups of Lie type (see
Theorem \ref{power main4} below). This bound is needed for the proof
of Theorem \ref{power theorem} but it might be interesting on its
own right.

\subsection{Notation}

The letter $p$ always denotes a prime number greater than $3$
(sometimes further restrictions which will be specified). The letter
$q$ denotes some power of $p$. For a group $G$, a subgroup $T
\subseteq G$ and an automorphism $\mu \in \aut(G)$, we denote
$T_\mu:=\{t \in T \mid \mu(t)=t\}$. In most cases the group $T$ will
be abelian and $\mu$ will preserve $T$. In addition, if $n \in \N$
then $T_n:=\{t \in T \mid \cen_G(t^n)=T\}$ and $T_{\mu,n}:=T_\mu
\cap T_n$.

We fix some root system $\Phi \subseteq\mathbb{R}^l$ and let
$W(\Phi)$ be its Weyl group. The symbols $G_q$ ($G_q^*$) represents
the Simply connected Chevalley (Steinberg) group of type $\Phi$ over
the field with $q$ elements $\F_q$. Unless otherwise mentioned, the
term finite Lie group means a Chevalley or Steinberg group of
adjoint type although the results apply to all finite Lie groups. A
detailed description of these groups is given in later sections.

\subsection{Powers in arbitrary groups}\label{genral setting}

We start this section with three lemmas.
\begin{lemma}\label{centralizer lemma} Let $G$ be a finite
group with a maximal abelian subgroup $T$. Assume $\mu \in \aut(G)$
preserves $T$ and denote $n:=\ord(\mu)$. If $t \in T_{\mu,n} $ then
$$T_\mu=\{g\in G \mid gt\mu(g)^{-1}=t\}.$$
\end{lemma}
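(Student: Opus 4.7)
The plan is to prove both containments separately. The inclusion $T_\mu \subseteq \{g \mid gt\mu(g)^{-1}=t\}$ is immediate: if $g \in T_\mu$, then $\mu(g)=g$, and since $g,t \in T$ and $T$ is abelian, $gt\mu(g)^{-1} = gtg^{-1} = t$.

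The real content lies in the reverse inclusion. Suppose $g \in G$ satisfies $gt\mu(g)^{-1} = t$. Rewriting this gives $\mu(g) = t^{-1}gt$. The key idea is to apply $\mu$ iteratively and exploit the assumption $\mu(t)=t$ to control how $\mu^k(g)$ differs from $g$. Concretely, I would prove by induction on $k \geq 0$ that
\[
\mu^k(g) \;=\; t^{-k} g\, t^k.
\]
The base case is trivial; for the inductive step, applying $\mu$ to the identity $\mu^{k-1}(g) = t^{-(k-1)} g\, t^{k-1}$ and using $\mu(t)=t$ together with $\mu(g) = t^{-1}gt$ yields the formula for $\mu^k(g)$.

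Now set $k = n = \ord(\mu)$. Then $\mu^n(g) = g$, so the displayed formula gives $t^n g = g t^n$, i.e.\ $g \in \cen_G(t^n)$. By the hypothesis $t \in T_n$, this centralizer equals $T$, so $g \in T$. Since $T$ is abelian and contains both $g$ and $t$, we have $gt = tg$, and substituting back into $gt\mu(g)^{-1} = t$ gives $\mu(g) = g$, i.e.\ $g \in T_\mu$.

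The only step that is more than bookkeeping is the inductive identity $\mu^k(g) = t^{-k} g t^k$ — everything else is a direct consequence of it together with the two hypotheses $\mu(t)=t$ and $\cen_G(t^n) = T$. There is no serious obstacle; the lemma is essentially a standard cocycle/$1$-coboundary argument for the cyclic group $\langle \mu \rangle$ acting trivially on $t$, packaged in the language of twisted centralizers.
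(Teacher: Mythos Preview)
Your proof is correct and follows essentially the same approach as the paper: both arguments iterate the relation $gt=t\mu(g)$ (equivalently $\mu(g)=t^{-1}gt$) using $\mu(t)=t$ to obtain $gt^n=t^ng$, conclude $g\in T$ from $\cen_G(t^n)=T$, and then read off $\mu(g)=g$. The only cosmetic difference is that the paper writes the telescoping chain $gt^n=t\mu(g)t^{n-1}=\cdots=t^n g$ directly, whereas you package the same computation as the inductive identity $\mu^k(g)=t^{-k}gt^k$.
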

\begin{proof}
The inclusion $\subseteq$ is clear. For the reverse inclusion assume
that $gt=t\mu(g)$. The element $t$ is fixed by $\mu$, so
$\mu^i(g)t=t\mu^{i+1}(g)$ for every $0 \le i \le {n}-1$. Hence,
$$gt^{n}=t\mu(g)t^{{n}-1}=t^2\mu^2(g)t^{{n}-2}=\cdots=t^n\mu^{n}(g)=t^{n}g.$$
This means the $g$ centralize $t^{n}$ so $g \in T$. Thus,
$gt\mu(g)^{-1}=t=gtg^{-1}$ so $\mu(g)=g$, i.e $g \in T_\mu$.
\end{proof}
\begin{lemma}\label{mormalizer lemma} Let $G$ be a finite group with a maximal abelian subgroup $T$.
Let $g \in G$ and $t \in T$. If $\cen_G(t)=T$ and $gtg^{-1}\in T$
then $g\in N$ where $N:=N_G(T)$.
\end{lemma}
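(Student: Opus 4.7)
The plan is to show that $gTg^{-1}$ coincides with $T$ by sandwiching $T$ inside $gTg^{-1}$ and then invoking maximality of the abelian subgroup to force equality.

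First I would set $s := gtg^{-1}$, which lies in $T$ by hypothesis. The key identity is $\cen_G(s) = \cen_G(gtg^{-1}) = g\,\cen_G(t)\,g^{-1} = gTg^{-1}$, where the last equality uses the assumption $\cen_G(t)=T$. This reformulates the centralizer of the conjugated element as a conjugate of $T$, a move one expects to be the crux of almost any such normalizer argument.

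Next, since $T$ is abelian and $s \in T$, every element of $T$ commutes with $s$, giving the inclusion $T \subseteq \cen_G(s) = gTg^{-1}$. Now $gTg^{-1}$ is abelian (conjugation is an automorphism), and if any abelian subgroup of $G$ properly contained it, pulling back by $g^{-1}$ would produce an abelian subgroup properly containing $T$, contradicting the maximality of $T$. Hence $gTg^{-1}$ is itself maximal abelian.

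Finally, since $|T| = |gTg^{-1}|$ (finiteness of $G$ matters only to make cardinality arguments clean) and both are maximal abelian with $T \subseteq gTg^{-1}$, we conclude $T = gTg^{-1}$, i.e.\ $g \in N_G(T)$. There is no serious obstacle here: the lemma is essentially a bookkeeping statement built on the formula $\cen_G(gxg^{-1}) = g\,\cen_G(x)\,g^{-1}$ combined with maximality of $T$.
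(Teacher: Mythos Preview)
Your proof is correct and follows essentially the same route as the paper: compute $\cen_G(gtg^{-1}) = gTg^{-1}$, observe $T \subseteq gTg^{-1}$ because $T$ is abelian and $gtg^{-1}\in T$, and conclude $T = gTg^{-1}$. The paper leaves the last step implicit (either maximality of $T$ or equal finite cardinality suffices, so your detour through showing $gTg^{-1}$ is itself maximal abelian is unnecessary but harmless).
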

\begin{proof} Conjugation by $g$ is an automorphism so
$\cen_{G}(gtg^{-1})=g\cen_{G}(t)g^{-1}=gTg^{-1}$. But $T$ is abelian
and so $T\subseteq \cen_{G}(gtg^{-1})=gTg^{-1}$ which implies
$T=gTg^{-1}$ and $g\in N$.
\end{proof}
\begin{lemma}\label{twist normalizer lemma} Let $G$ be a finite
group with a maximal abelian subgroup $T$. Assume $\mu \in \aut(G)$
preserves $T$ and denote $n:=\ord(\mu)$. Let $c$ be the number of
elements of $T$ of order dividing  $n$. If
$|T_{\mu,n}|>\frac{1}{2}|T_\mu|$, then the set
$$L:=\{g \in G \mid \exists s\in T_{\mu,n} \text{ s.t.
} gs\mu(g^{-1})\in T_{\mu,n}\}$$ is a subgroup of $G$ which contains
$T_\mu$ as a subgroup of index at most $c[N:T]$ where $N:=N_G(T)$.
\end{lemma}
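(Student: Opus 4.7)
The plan is to introduce the twisted conjugation action $g\cdot t := gt\mu(g)^{-1}$ of $G$ on itself (genuinely a left action since $(g_1g_2)\cdot t = g_1\cdot(g_2\cdot t)$) and to observe that $L$ is exactly the set of $g$ with $(g\cdot T_{\mu,n})\cap T_{\mu,n}\ne\emptyset$. The heart of the proof will be to show that actually $g\cdot T_{\mu,n}=T_{\mu,n}$ for every such $g$, identifying $L$ as the set-stabilizer of $T_{\mu,n}$ and hence as a subgroup. The inclusion $T_\mu\subseteq L$ will then come for free: $T_{\mu,n}$ is nonempty by the hypothesis $|T_{\mu,n}|>\tfrac{1}{2}|T_\mu|$, and for $t\in T_\mu$ and $s\in T_{\mu,n}$ one has $t\cdot s = ts\mu(t)^{-1}=tst^{-1}=s$ by commutativity of $T$.

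The technical core is to extract structural information about $g$ from a single witness $gs\mu(g)^{-1}=s'$ with $s,s'\in T_{\mu,n}$. Setting $u:=g^{-1}\mu(g)$, I will apply $\mu$ to this identity and use $\mu(s)=s$, $\mu(s')=s'$ to obtain the companion identity $\mu(g)s\mu^2(g)^{-1}=s'$; equating the two expressions yields $\mu(u)=s^{-1}us$. Iterating gives $\mu^k(u)=s^{-k}us^k$ for all $k$, and at $k=n$ with $\mu^n=\mathrm{id}$ this forces $u\in\cen_G(s^n)=T$ (by $s\in T_n$). Commutativity of $T$ then upgrades this to $\mu(u)=u$, so $u\in T_\mu$. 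From $\mu(g)=gu$ and $\mu(u)=u$ one iterates to $\mu^k(g)=gu^k$, and $\mu^n=\mathrm{id}$ forces $u^n=e$. Finally, $s'=gsu^{-1}g^{-1}$ exhibits $g$ as conjugating $su^{-1}\in T$ into $T$; since $(su^{-1})^n=s^n$ has centralizer $T$, so does $su^{-1}$, and Lemma~\ref{mormalizer lemma} yields $g\in N$.

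With $g\in N$, $u\in T_\mu$ and $u^n=e$ in hand, the remaining assertion $g\cdot s''\in T_{\mu,n}$ for every $s''\in T_{\mu,n}$ is a direct verification: the element $gs''u^{-1}g^{-1}$ lies in $T$ because $g\in N$ and $s''u^{-1}\in T$; it is fixed by $\mu$ by a short calculation using $\mu(u)=u$, $\mu(s'')=s''$ and commutativity of $T$; and its $n$-th power equals $gs''^n g^{-1}$ (since $u^n=e$), whose centralizer is $gTg^{-1}=T$. Hence $g\cdot T_{\mu,n}\subseteq T_{\mu,n}$, and by injectivity of the action this inclusion is an equality, confirming that $L$ is a subgroup.

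For the index bound, consider the map $\phi\colon L\to T_\mu\times N/T$ sending $g\mapsto (g^{-1}\mu(g),\,gT)$; its image lies in $\{u\in T_\mu:u^n=e\}\times N/T$, a set of cardinality at most $c[N:T]$. A straightforward computation shows that $\phi(g_1)=\phi(g_2)$ exactly when $g_2\in g_1T_\mu$, and $T_\mu$ is normal in $L$ (because $T\trianglelefteq N\supseteq L$ and, for $g\in L$ and $t\in T_\mu$, the conjugate $gtg^{-1}\in T$ is $\mu$-fixed thanks to $\mu(u)=u$ and $T$ abelian). So the fibers of $\phi$ are precisely the left cosets of $T_\mu$, yielding $[L:T_\mu]\le c[N:T]$. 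The main obstacle throughout is the twisted-commutator manipulation of the second paragraph, from which $u\in T_\mu$ and $u^n=e$ drop out; once this is in place, the remainder of the argument is routine.
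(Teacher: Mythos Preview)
Your proof is correct and follows the same twisted-conjugation framework as the paper's, deriving the key facts $u:=g^{-1}\mu(g)\in T_\mu$, $u^n=e$, and $g\in N$ by essentially the same manipulations (the paper packages your iteration $\mu^k(u)=s^{-k}us^k$ as an appeal to Lemma~\ref{centralizer lemma}, and reaches $g\in N$ via $g^{-1}\in L$ rather than via the element $su^{-1}$, but these are cosmetic). The one substantive organizational difference is in how $L$ is recognized as a group: the paper introduces the auxiliary group $Y:=\{g\in G\mid gT_\mu\mu(g^{-1})=T_\mu\}$, shows $L\subseteq Y$, and then invokes the full hypothesis $|T_{\mu,n}|>\tfrac{1}{2}|T_\mu|$ via a pigeonhole argument to get $Y\subseteq L$; you instead prove directly that $g\cdot T_{\mu,n}=T_{\mu,n}$ for every $g\in L$, identifying $L$ as the set-stabilizer of $T_{\mu,n}$ itself. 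Your route is a bit more economical, using the hypothesis only to guarantee $T_{\mu,n}\ne\emptyset$, whereas the paper's route makes the role of the $\tfrac{1}{2}$ explicit. The index bound is handled identically in substance: your map $\phi(g)=(g^{-1}\mu(g),gT)$ just combines into one step the paper's two observations that $L\subseteq N$ and that $g\mapsto \mu(g)g^{-1}$ separates cosets of $T_\mu$ in $L\cap T$.
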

\begin{proof}
Let $g\in L$. We start by showing that $gt\mu(g^{-1})\in T_\mu$ for
every $t\in T_\mu$. Choose $s\in T_{\mu,n}$ such that
$gs\mu(g^{-1})\in T_{\mu,n}$. Every element of $T_\mu$ is fixed by
$\mu$, so
$$gs\mu(g^{-1})=\mu(g)s\mu^2(g^{-1})\text{  and  }
\mu(g^{-1})gs\mu(g^{-1})\mu^2(g)=s.$$ Lemma \ref{centralizer lemma}
together with the fact that
$\mu(\mu(g^{-1})g)^{-1}=\mu(g^{-1})\mu^2(g)$ implies that
$u:=\mu(g^{-1})g \in T_\mu$. The element $u$ is fixed by $\mu$ so
$$u=\mu^i(u)=\mu^{i+1}(g^{-1})\mu^i(g)$$ for every $0 \le i \le
n-1$. Multiplying these equations we get that
$$u^n=\mu^{n-1}(u)\cdots \mu^{1}(u)u=1$$
which means that the order of $u$ divides $n$.

It is clear that $L$ is closed to inverses so $g^{-1}\in L$ and the
argument above assures that $v:=\mu(g)g^{-1}\in T_\mu$. Since
$gsg^{-1}v^{-1}=gs\mu(g^{-1})\in T_{\mu,n}$ also $gsg^{-1}\in
T_{\mu}$. The fact that $s \in T_{\mu,n}$ implies that
$\cen_G(s)=T$, so Lemma \ref{mormalizer lemma} shows that $g\in N$.
In particular, if $t\in T_{\mu}$ then
$gt\mu(g^{-1})=gtg^{-1}v^{-1}\in T$. In order to show that
$gt\mu(g^{-1})\in T_{\mu}$ we have to show that it is fixed by
$\mu$. Indeed,
$$\mu(gt\mu(g^{-1}))=\mu(g)t\mu^2(g^{-1})=gg^{-1}\mu(g)t\mu^2(g^{-1})\mu(g)\mu(g^{-1})=gu^{-1}tu\mu(g^{-1})=gt\mu(g^{-1})$$
where that last equality is true since  $T$ is commutative.

Define $Y:=\{g \in G \mid gT_\mu\mu(g^{-1})=T_\mu\}$, clearly $Y$ is
a subgroup of $G$. The previous two paragraphs show that $L
\subseteq Y$, in fact there is an equality. Indeed, the pigeon hole
principle together with the fact $|T_{\mu,n}|>\frac{1}{2}|T_\mu|$
show that if $g \in Y$ then $gs\mu(g)^{-1}\in  T_{\mu,n}$ for some
$s\in T_{\mu,n}$.

We have established that $L$ is a group contained in $N$ and it is
clear that $T_\mu \le L$.
 Hence, in order to show that $[L:T_\mu]\le
c[N:T]$ we only have to prove that $[L\cap T:T_{\mu}]\le c$. We
showed that if  $g \in L$ then $\mu(g)g^{-1}$ is an element of
$T_{\mu}$ whose order divides $n$. The number of elements of
$T_{\mu}$ of order dividing $n$ is $c$ so it will be enough to show
that if $t_1,t_2\in L \cap T$ and
$\mu(t_1)t_1^{-1}=\mu(t_2)t_2^{-1}$ then $t_1t_2^{-1}\in T_{\mu}$.
Indeed, since $T$ is abelian the equality
$\mu(t_1)t_1^{-1}=\mu(t_2)t_2^{-1}$ implies that
$t_1t_2^{-1}=\mu(t_1t_2^{-1})$, i.e $t_1t_2^{-1}\in T_{\mu}$.
\end{proof}

The next proposition is the one we will use in the following
sections.

\begin{prop}\label{power proportion prop} Let $m$ be a prime number.
Let $H$ be a finite group containing a normal subgroup $G$. Fix some
$h \in H$ and let $n$ be the order of the automorphism $\mu\in
\aut(G)$ induced by conjugation by $h$. Let $T$ be a maximal abelian
subgroup of $G$ preserved by $\mu$ and denote by $c$ the number of
elements of $T$ of order dividing $n$. Assume that:
\begin{enumerate}
\item[1.] $m$ divides $|T_\mu|$.
\item[2.] $|T_{\mu,n}|\ge \frac{3}{4}|T_\mu|$.
\end{enumerate}
Then, for $N:=N_G(T)$  $$|\{x^m \mid x \in Gh \}| \le
\left(1-\frac{1}{4c[N:T]}\right)|G|.$$
\end{prop}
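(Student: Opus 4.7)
The plan is to estimate $|\phi(Gh)|$ where $\phi(x):=x^m$. Expanding $(gh)^m=g\mu(g)\cdots\mu^{m-1}(g)h^m$, I note that for $u\in T_\mu$ this collapses to $u^mh^m$ because $\mu$ fixes $u$. Since the prime $m$ divides $|T_\mu|$ (assumption~1), Cauchy's theorem applied to the abelian group $T_\mu$ gives $|T_\mu[m]|\ge m$ where $T_\mu[m]:=\{t\in T_\mu\mid t^m=1\}$. This yields many automatic collisions: for every $u\in T_\mu$ and every $t\in T_\mu[m]$, $(tuh)^m=(tu)^mh^m=u^mh^m$.

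Next I would set $K:=T_\mu[m]\cdot T_{\mu,n}$, which is a union of $T_\mu[m]$-cosets in $T_\mu$ (so $T_\mu[m]\cdot K=K$), and define $B:=\bigcup_{x\in G}x(Kh)x^{-1}\subseteq Gh$, a $G$-conjugation-invariant set. For the \emph{lower bound} on $|B|$: since $T_{\mu,n}\subseteq K$, $B$ contains the $G$-conjugacy class of $sh$ for each $s\in T_{\mu,n}$. By Lemma~\ref{centralizer lemma} one has $C_G(sh)=T_\mu$, so each such class has size $|G|/|T_\mu|$ in $Gh$. Two such classes coincide iff the two elements of $T_{\mu,n}$ are $G$-twisted conjugate, which by the definition of $L$ forces the conjugator into $L$. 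Lemma~\ref{twist normalizer lemma} (applicable since assumption~2 gives $|T_{\mu,n}|\ge\tfrac34|T_\mu|>\tfrac12|T_\mu|$) then shows each $L$-twisted orbit inside $T_{\mu,n}$ has at most $[L:T_\mu]\le c[N:T]$ elements, so the number of distinct $G$-conjugacy classes of such $sh$'s is at least $|T_{\mu,n}|/(c[N:T])\ge 3|T_\mu|/(4c[N:T])$. Multiplying by the class size gives $|B|\ge 3|G|/(4c[N:T])$.

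For the \emph{fiber estimate} on $B$: any $y\in\phi(B)$ has the form $y=xu^mh^mx^{-1}$ for some $u\in K$ and $x\in G$, and for each $t\in T_\mu[m]$ the element $x(tu)hx^{-1}\in B$ (because $tu\in K$) satisfies $\phi(x(tu)hx^{-1})=x(tu)^mh^mx^{-1}=xu^mh^mx^{-1}=y$. These $|T_\mu[m]|$ elements are distinct (as $tu$ varies with $t$), so $|\phi^{-1}(y)\cap B|\ge|T_\mu[m]|\ge m$, giving $|\phi(B)|\le|B|/m$. Combining the two estimates,
$$|\phi(Gh)|\le|\phi(B)|+|Gh\setminus B|\le|G|-|B|\bigl(1-\tfrac{1}{m}\bigr)\le|G|-\tfrac{3|G|}{4c[N:T]}\bigl(1-\tfrac{1}{m}\bigr).$$
Using $m\ge 2$ so that $1-1/m\ge 1/2$, this is at most $|G|\bigl(1-\tfrac{3}{8c[N:T]}\bigr)\le|G|\bigl(1-\tfrac{1}{4c[N:T]}\bigr)$, as desired. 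The main obstacle is choosing $B$ so that it is simultaneously large (controlled by the $T_{\mu,n}$-part via Lemma~\ref{twist normalizer lemma}) and carries built-in $\phi$-collisions of multiplicity $\ge m$ (supplied by the $T_\mu[m]$-part shifting freely within each fiber), both features being preserved under $G$-conjugation.
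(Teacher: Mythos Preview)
Your proof is correct and follows essentially the same strategy as the paper: exploit the collision $(tuh)^m=(uh)^m$ for $t$ in the $m$-torsion of $T_\mu$, and control how many twisted conjugates of elements of $T_{\mu,n}$ coincide via the subgroup $L$ from Lemma~\ref{twist normalizer lemma}.

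The packaging differs in two minor but pleasant ways. First, the paper fixes a single element $s\in T_\mu$ of order $m$ and builds a $2$-to-$1$ collapse by choosing $A\subseteq T_{\mu,n}$ with $sA\subseteq T_{\mu,n}$ and $A\cap sA=\emptyset$; you instead use the full torsion subgroup $T_\mu[m]$ and get an $m$-to-$1$ collapse on $B$, which sidesteps the choice of such an $A$ entirely. Second, the paper counts via a set $R$ of coset representatives of $G/L$, showing directly that the elements $rt\mu(r^{-1})h$ for $r\in R$, $t\in A\cup sA$ are distinct; you phrase the same count as ``distinct $G$-conjugacy classes of $sh$ have size $|G|/|T_\mu|$ and there are at least $|T_{\mu,n}|/[L:T_\mu]$ of them'', which is the same arithmetic (both routes use $C_G(sh)=T_\mu$ from Lemma~\ref{centralizer lemma} and $[L:T_\mu]\le c[N:T]$ from Lemma~\ref{twist normalizer lemma}). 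Your route yields the slightly better constant $\tfrac{3}{8c[N:T]}$ before relaxing to $\tfrac{1}{4c[N:T]}$, but the underlying mechanism is identical.
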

\begin{proof} Choose an element $s \in T_\mu$ of order $m$. If $t \in T$
then $(th)^m=(tsh)^m$ since $T$ is abelian and $h$ commute with $s$.
Condition $2$ implies that the number of elements $t \in T_{\mu,n}$
such that $ts\in T_{\mu,n}$ is at least $\frac{1}{2}|T_\mu|$. This
shows that we can choose a set $A \subseteq T_{\mu,n}$ such that:
\begin{itemize}
\item The  size of $A$ is at least $\frac{1}{4}|T_\mu|$.
\item The set $B:=\{as \mid a \in A \}$ is contained in $T_{\mu,n}$.
\item $A \cap B=\emptyset$.
\end{itemize}
Choose a set of representatives $R$ for $G/L$ where $L$ is defined
in Lemma \ref{twist normalizer lemma}. If $t_1,t_2\in A \cup B$ and
$r_1,r_2 \in R$ satisfies $r_1t_1\mu(r_1^{-1})=r_2t_2\mu(r_2^{-1})$
then $t_1=t_2$ and $r_1=r_2$. The equality $rth
r^{-1}=rt\mu(r^{-1})h$ for $t\in T$ and $r\in R$ implies that
$$|\{rthr^{-1} \mid t\in A\cup B\ \wedge \ r \in
R\}|=2|A||R|$$ while
$$|\{(rth r^{-1})^m \mid x\in A\cup B \ \wedge \ r \in R\}|=
|\{(rth r^{-1})^m \mid t\in A \ \wedge\ r \ \in R\}|\le |A||R|.$$
Lemma \ref{twist normalizer lemma} assures that $|R||A|\ge
\frac{1}{4c[N:T]}|G|$ so
$$|\{x^m \mid x \in Gh \}| \le |G|-|A||R| \le
\left(1-\frac{1}{4c[N:T]}\right)|G|.$$
\end{proof}

\subsection{Chevalley groups} In this subsection we briefly describe
the Chevalley groups and their basic properties. More details and
proofs can be found in the classical book of Carter \cite{Ca}.

Let $\Phi \subseteq \mathbb{R}^l$ be an indecomposable root system
and fix a fundamental system of roots $\Pi \subseteq \Phi$. Let $\L$
be the simple Lie algebra over $\mathbb{C}$ associated with $\Phi$.
We regard $\Phi$ as a subset of $\L$ and for every $r\in \Phi$ we
define $h_r:=\frac{2r}{(r,r)}$ where $(\cdot,\cdot)$ is the usual
scalar product of $\mathbb{R}^l$. Finally we fix a Chevalley basis
$\{h_r \mid r \in \Pi\}\cup\{e_r \mid r\in\Phi\}$ of $\L$. The
multiplication of $\L$ satisfies:
\begin{enumerate}
\item[1.] $[h_rh_s]=0$.
\item[2.] $[h_re_s]=(h_r,s)e_s$.
\item[3.] $[e_re_s]=h_r$ if $r+s=0$.
\item[4.] $[e_re_s]\in \mathbb{Z}e_{s+r}$ if $r+s \in \Phi$.
\item[5.] $[e_re_s]=0$ if $r+s \not \in \Phi\cup\{0\}$.
\end{enumerate}

In particular, the product of every two elements of the Chevalley
basis is a linear combination with rational integer coefficient of
the basis elements. Thus, there is a Lie algebra $\L_q$ over $\F_q$
with the same basis as $\L$ and a similar multiplication.  The
coefficients of the product of elements of the basis in $\L_q$ are
equal modulo $p$ to the ones of the product in $\L$.

The rules of the multiplication also show that for every $r\in \Phi$
the linear map $\textrm{ad }e_r: \L \rightarrow \L$ is nilpotent and
so $\textrm{exp} (t\ \textrm{ad } e_r)$ is an automorphism of $\L$
for every $t \in \mathbb{C}$. It turns out that $\textrm{exp} (t\
\textrm{ad } e_r)$ is in fact is a finite power series of the form
$$\underset{0 \le k \le n}\sum a_k (t \textrm{ad } e_r)^k$$
where $a_k \in \mathbb{Z}$ for $0\le k\le n$. Hence,  this power
series can be evaluated also for $t \in \F_q$ and the result is an
automorphism of $\L_q$ denoted by $x_{r,t}$. The Chevalley group
$G_q$ associated to $\Phi$ is the subgroup of the automorphism group
of $\L_q$ generated by the $x_{r,t}$ for $r \in \Phi$ and $t\in
\F_q$.

In the next few paragraphs we will define certain subgroups of
$G_q$. In order to keep the notations simple we will not insert the
letter $q$ in the symbols of these subgroup but the reader should
always remember that these groups depends on the group $G_q$ which
in turn depend on $q$ and the root system $\Phi$.

For every root $r \in \Phi$ and an element $t\in \F_q^*$ there is an
element $d_{r,t}\in G_q$ which satisfies
$d_{r,t}(e_s)=t^{(h_r,s)}e_s$  and $d_{r,t}(h_s)=h_s$ for every
$s\in \Phi$. The diagonal subgroup $T$ is the group generated by the
these elements. The group $T$ is a maximal abelian subgroup and, as
the notation suggest, it will play the roll of the maximal abelian
subgroup of the first section. As before, for an $n \in \mathbb{N}$
denote $T_n:=\{t \in T \mid \cen_{G_q} (t^n)=T\}$. Our first goal is
to find a sufficient condition for an element of $T$ to belong to
$T_n$.

Every root $s \in \Phi$ can be written in a unique way as
$s=\epsilon_s\sum_{r \in \Pi}n_{s,r}r$ where $\epsilon=\pm 1$ and
the $n_r$'s are natural numbers. The set of positive roots $\Phi^+$
contains the roots $s$ with $e_s=1$ and the set of negative roots
$\Phi^-$ contains the roots $s$ with $e_s=-1$. The height of a root
$s$ is defined to be $\epsilon\sum_{r \in \Pi} n_{s,r}$. Let
$\L_q^+$ ($\L_q^-$) be the subspace of $\L$ which is generated by
the elements $e_r$ where $r$ runs over the positive (negative) roots
and let $\L_q^0$ the subspace generated by the $h_r$, with
$r\in\Pi$.

The upper unipotent subgroup $U$ of $G_q$ is the group generated by
the elements $x_{r,t}$ for $r\in\Phi^+$ and $t\in\F_q$. Similarly,
the lower unipotent subgroup $V$ of $G_q$ is the group generated by
the elements $x_{r,t}$ for $r\in\Phi^-$ and $t\in\F_q$.Our first
lemma is rater technical.
\begin{lemma}\label{the cartan subalgebra is not preserved by unipotent}
If $u \in U$ preserves $\L_q^0$ then $u$ is the identity.
\end{lemma}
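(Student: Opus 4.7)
The plan is to use the natural filtration of $U$ by root height together with a reverse induction. Let $N$ denote the maximal height of a positive root and, for $1 \le h \le N+1$, let $U_h$ be the subgroup of $U$ generated by the root subgroups $X_r = \{x_{r,t} : t \in \F_q\}$ for $r \in \Phi^+$ with $\operatorname{ht}(r) \ge h$. The Chevalley commutator formula together with additivity of heights shows that $[X_r, X_s] \subseteq U_{\operatorname{ht}(r)+\operatorname{ht}(s)}$, so each $U_h$ is normal in $U$ and each quotient $U_h/U_{h+1}$ is abelian and canonically isomorphic to the direct product of the $X_r$ with $\operatorname{ht}(r) = h$. In particular, every $u \in U_h$ admits a unique factorization $u = u_h u'$ with $u' \in U_{h+1}$ and $u_h = \prod_{\operatorname{ht}(r) = h} x_{r, t_r}$ for any fixed ordering of the height-$h$ positive roots.

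Write $\L_q^{(h)}$ for the $\F_q$-span of $\{e_r : r \in \Phi^+,\ \operatorname{ht}(r) \ge h\}$. The Chevalley relations show that $\operatorname{ad}(e_r)$ for $r \in \Phi^+$ maps $\L_q^0 + \L_q^+$ into $\L_q^+$ and raises height by $\operatorname{ht}(r)$ on each $e_s$ with $s \in \Phi^+$; consequently every $x_{r,t}$, and hence every element of $U$, preserves $\L_q^0 + \L_q^+$ and each $\L_q^{(h)}$, and satisfies $u(h_s) - h_s \in \L_q^+$. Thus the hypothesis that $u$ preserves $\L_q^0$ is equivalent to $u(h_s) = h_s$ for every $s \in \Pi$. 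I will prove by descending induction on $h$ that if $u \in U_h$ fixes every $h_s$ then $u = 1$, the base case $h = N+1$ being trivial. For the inductive step, write $u = u_h u'$ as above. Since $u' \in U_{h+1}$ fixes each $h_s$ modulo $\L_q^{(h+1)}$, and since cross-terms among height-$h$ root subgroup elements applied to $h_s$ land in $\L_q^{(2h)} \subseteq \L_q^{(h+1)}$, a direct expansion using $\operatorname{ad}(e_r)(h_s) = -(h_s, r) e_r$ yields
$$u(h_s) \equiv h_s - \sum_{\operatorname{ht}(r) = h} t_r (h_s, r)\, e_r \pmod{\L_q^{(h+1)}}.$$

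Since $u(h_s) = h_s$, linear independence of $\{e_r : \operatorname{ht}(r) = h\}$ in $\L_q^{(h)}/\L_q^{(h+1)}$ gives $t_r(h_s, r) = 0$ in $\F_q$ for every height-$h$ root $r$ and every $s \in \Pi$. The main step, where the standing assumption $p > 3$ enters, is to upgrade this to $t_r = 0$: since $\Pi$ is a basis of $\mathbb{R}^l$ and $r \ne 0$, some $s \in \Pi$ satisfies $(s, r) \ne 0$, and then $(h_s, r) = 2(s, r)/(s, s)$ is a nonzero Cartan integer in $\{0, \pm 1, \pm 2, \pm 3\}$, hence remains nonzero in $\F_q$. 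This forces $t_r = 0$ for every height-$h$ root $r$, so $u = u' \in U_{h+1}$, and the induction completes the proof. The technical obstacle is really just the bookkeeping with the height filtration when expanding $u = u_h u'$ modulo $\L_q^{(h+1)}$; once that is organized, the $p > 3$ hypothesis is precisely what is needed to turn the scalar equations $t_r(h_s, r) = 0$ into $t_r = 0$.
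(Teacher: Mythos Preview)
Your argument is correct and follows the same height-filtration idea as the paper's proof: both isolate the contribution to $u(h)$ coming from the root subgroups of minimal height occurring in $u$. The only real difference is the choice of test vector: you pair with the simple coroots $h_s$, $s\in\Pi$, which produces the Cartan integers $(h_s,r)=2(s,r)/(s,s)\in\{\pm1,\pm2,\pm3\}$ and therefore uses the full hypothesis $p>3$; the paper instead pairs with $h_{r^*}$ for the minimal root $r^*$ itself, so the leading coefficient is simply $(h_{r^*},r^*)=2$, which already suffices for $p\neq 2$ and lets one dispense with the explicit descending induction.
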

\begin{proof} Let $u \in U$ be a non-identity element.
Fix an ordering $\succ$ of the roots such that $r \succ s$ implies
that the height of $r$ is greater or equal to the height of $s$.
There are unique $t_r \in \F_q$ such that
$$u=\underset{r\in\Phi^+}\prod x_{r,t_r}$$ where the product in taken
in an increasing order of the roots. Let $r^*\in \Phi^+$ be the
minimal root with respect to $\succ$ such that $t_{r^*}\ne 0$. The
definition of $x_{r,t}$ for $r\in \Phi^+$ and $t\in \F_q$ implies
that for $s\in \Phi$:
\begin{enumerate}
\item[1.] $x_{r,t}(h_s)=h_s-t_{r^*}(h_s,r)e_r$.
\item[2.] $x_{r,t}(e_s)=e_s+v$ where $v$ is a linear combination of roots
with height greater then the height of $s$.
\end{enumerate}
Thus, $u(h_{r^*})=h_{r*}-2t_{r^*}e_{r*}+v$ where $v$ is a linear
combination of roots which are greater than $r^*$ ($0 \ne 2 (\Mod
p)$ since $p\ge 5$).
\end{proof}

The last ingredient needed for the proof of Lemma \ref{reg semi
simple} is the Bruhat decomposition.  The diagonal group $T$
normalize $U$ and so $B:=TU$ is a subgroup of $G_q$. Let $N$ be the
normalizer of $T$ in $G$, $N$ is called the monomial group.  Every
$n\in N$ acts as a permutation on the set $\{\mathbb{C}e_r \mid r
\in \Phi\}$. This action defines an epimorphism of $N$ onto to Weyl
group $W(\Phi)$ with kernel $T$. For every $w \in W(\Phi)$ choose a
representative $n_w\in N$. The Bruhat decomposition says that
$$G_q=\underset{w\in W(\Phi)}\bigcup Bn_wB$$
where the union is disjoint.

\begin{lemma}\label{reg semi simple} Assume that $d\in T$ satisfies:
\begin{enumerate}
\item[1.] If $r\in \Phi$ then $d(e_r)=\lambda_r
e_r$ where $\lambda_r \ne 1$.
\item[2.] If $\lambda_r=\lambda_s$ then $r,s \in \Phi^+$ or
$r,s\in\Phi^{-}$.
\end{enumerate}
Then $T$ is the centralizer of $d$.
\end{lemma}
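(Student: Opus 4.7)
Since $T$ is abelian, $T\subseteq \cen_{G_q}(d)$ is immediate, so the content is the reverse inclusion. My plan proceeds in three steps: first show that the only element of $U$ (resp.\ $V$) fixed under conjugation by $d$ is the identity; then, via the Bruhat decomposition combined with hypothesis (2), force any $g \in \cen_{G_q}(d)$ into $B=TU$; and finally strip off the $U$-component to land in $T$.

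For the first step, the identity $d\, x_{r,t}\, d^{-1}=\exp(t\,\textrm{ad}(d(e_r)))=\exp(t\lambda_r\,\textrm{ad}\,e_r)=x_{r,\lambda_r t}$ shows that conjugation by $d$ scales the parameter in each root subgroup by $\lambda_r$. Fix any total order on $\Phi^+$; every $u\in U$ has a unique expression $u=\prod_{r\in\Phi^+}x_{r,t_r}$ in that order, and conjugation by $d$ gives $dud^{-1}=\prod_{r\in\Phi^+}x_{r,\lambda_r t_r}$. By uniqueness, $dud^{-1}=u$ forces $\lambda_r t_r = t_r$ for every $r$, and hypothesis (1) then yields $t_r=0$, so $u=1$. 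An identical argument handles $V$.

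For the Bruhat step, take $g\in \cen_{G_q}(d)$ and let $w\in W(\Phi)$ be the unique Weyl element with $g\in Bn_wB$. Using a form of Bruhat with unique parameterization and conjugating both sides of the centralizer equation $dgd^{-1}=g$ by $d$ (which normalizes $T$, $U$, $V$, and sends $n_w$ to an element of $Tn_w$ because $n_w$ normalizes $T$), the equation reduces, by uniqueness, to a fixed-point equation on the unipotent pieces plus the relation $n_w^{-1}dn_w=d$. A direct computation using $n_w(e_s)=\pm e_{w(s)}$ gives $(n_w^{-1}dn_w)(e_r)=\lambda_{w(r)}e_r$, so the latter relation becomes $\lambda_{w(r)}=\lambda_r$ for every $r\in\Phi$. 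Hypothesis (2) then forces $r$ and $w(r)$ to have the same sign for every root, so $w$ preserves $\Phi^+$, and hence $w=1$. Therefore $g\in B$. Writing $g=tu$ uniquely with $t\in T$ and $u\in U$, the relation $dgd^{-1}=g$ combined with $[d,t]=1$ gives $dud^{-1}=u$, so the first step yields $u=1$ and $g=t\in T$. The conceptual heart is the Weyl-group sign argument; the main technical obstacle is choosing a form of the Bruhat decomposition with clean enough uniqueness so that the bookkeeping above runs without hiccups.
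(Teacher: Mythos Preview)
Your proposal is correct, but the route differs from the paper's in an instructive way. The paper does not manipulate the Bruhat uniqueness equations directly; instead it works through the action on the Lie algebra $\L_q$. Since $g$ commutes with $d$, it preserves each eigenspace of $d$; hypotheses (1) and (2) say precisely that $\L_q^0$ is the $1$-eigenspace and that no eigenvalue occurring on $\L_q^+$ also occurs on $\L_q^-$, so $g$ preserves $\L_q^+$. As every element of $B$ already preserves $\L_q^+$, writing $g=b_1 n_w b_2$ forces $n_w$ to preserve $\L_q^+$ as well; since $n_w$ permutes the root lines this gives $w(\Phi^+)=\Phi^+$ and hence $w=1$. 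For the final step the paper invokes the preparatory Lemma~\ref{the cartan subalgebra is not preserved by unipotent}: a centralizing $u\in U$ must preserve the $1$-eigenspace $\L_q^0$, and that lemma says only the identity in $U$ does so.

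Your approach trades this eigenspace argument for a purely group-internal computation: root-subgroup uniqueness to kill the $U$-part, and Bruhat uniqueness to extract $n_w^{-1}dn_w=d$ from $dgd^{-1}=g$. This is valid, and your Step~1 is actually cleaner than the paper's route for the unipotent part, since it bypasses Lemma~\ref{the cartan subalgebra is not preserved by unipotent} entirely. The cost is exactly the bookkeeping you flag in Step~2: one must fix a normal form such as $g=u\,n_w\,b$ with $u\in U\cap n_w V n_w^{-1}$, check that $d$ normalizes this factor, and use $dn_wd^{-1}\in n_wT$; the reduction to $n_w^{-1}dn_w=d$ then does go through after a short chase inside $B=TU$. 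The paper's eigenspace argument sidesteps this entirely and is worth internalizing, as the same device recurs whenever one controls centralizers of regular semisimple elements.
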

\begin{proof} Note that $L_q^0$ is exactly the eigen space of $d$ with
eigenvalue $1$. Assume $g\in G_q$ centralizes $d$. Conditions $1$
and $2$ imply that $g$ preserves $\L_q^+$.  Choose $b_1,b_2\in B$
and $w\in W(\Phi)$ such that $g=b_1n_wb_2$. The element
$n_w=b_1^{-1}gb_2^{-1}$ also preserves $\L_q^+$ since $B$ does.
Thus, $w(\Phi^+)=\Phi^+$ since
$n_w(\mathbb{C}e_r)=\mathbb{C}e_{w(r)}$ for every $r\in \Phi$. The
identity is the only element of the Weyl group which preserve
$\Phi^+$ so $w=\id$ and $g\in B$. Write $g=cu$ with $c \in T$ and $u
\in U$. Since $c$ centralizes $d$ also $u$ centralizes it. Condition
$1$ shows that $u$ preserves $\mathcal{L}_q^0$ and so $u=\id$ by
Lemma \ref{the cartan subalgebra is not preserved by unipotent}.
\end{proof}
\begin{corol}\label{reg semi simple-corol} Let $n \in \mathbb{N}$. Assume that $d\in T$ satisfies:
\begin{enumerate}
\item[1.] If $r\in \Phi$ then $d(e_r)=\lambda_r
e_r$ where $\lambda_r^n \ne 1$.
\item[2.] If $\lambda_r^n=\lambda_s^n$ then $r,s \in \Phi^+$ or
$r,s\in\Phi^{-}$.
\end{enumerate}
Then $d \in T_n$.
\end{corol}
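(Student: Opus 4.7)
The plan is to reduce the statement to Lemma \ref{reg semi simple} applied to the element $d^n$ rather than to $d$ itself. Since $d \in T$ acts diagonally on the Chevalley basis with $d(e_r) = \lambda_r e_r$, the power $d^n$ also lies in $T$ and acts diagonally with $d^n(e_r) = \lambda_r^n e_r$.

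Under this substitution, the two hypotheses of the corollary translate directly into the two hypotheses of Lemma \ref{reg semi simple} for $d^n$: condition (1) becomes ``all eigenvalues $\lambda_r^n$ of $d^n$ on root vectors differ from $1$,'' and condition (2) becomes ``whenever two such eigenvalues coincide, the roots lie on the same side of $\Phi^+ \cup \Phi^-$.'' Hence Lemma \ref{reg semi simple} applies to $d^n$ and yields $\cen_{G_q}(d^n) = T$.

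By the very definition $T_n = \{t \in T \mid \cen_{G_q}(t^n) = T\}$, this is exactly the statement $d \in T_n$. Since Lemma \ref{reg semi simple} has already been established, there is no remaining obstacle; the corollary is essentially a one-line reformulation of the lemma applied to the $n$-th power of the diagonal element.
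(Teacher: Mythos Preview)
Your proof is correct and is exactly the argument the paper intends: the corollary is stated without proof immediately after Lemma~\ref{reg semi simple}, and the implicit reasoning is precisely to apply that lemma to $d^n$ (whose eigenvalues on root vectors are the $\lambda_r^n$) and then invoke the definition of $T_n$.
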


Now that we have  a sufficient criterion for an element to belong to
$T_n$ we want to show that a high proportion of the element of $T$
belongs to $T_n$.

\subsection{Automorphisms of Chevalley groups} The chevalley group
$G_q$ has trivial center so it is isomorphic to the inner
automorphism group and we shell identify these two groups. In
general, $G_q$ has a non-inner automorphisms which we shell now
describe.

Let $\hat{T}\subseteq \Aut(\L_q)$ be the group consisting of
automorphisms which have the elements of the Chevalley basis as
eigenvectors and act as the identity on $\L_q^0$. The group
$\hat{T}$ is abelian, has exponent $q-1$ and contains $T$. In
addition, $\hat{T}$ normalizes $G_q$, so conjugation by its elements
induces automorphisms of $G_q$. The automorphisms which are induced
in this way are called diagonal automorphisms.

The second set of automorphisms are the graph automorphisms. These
automorphisms arise from symmetry of the Dynkin diagram of $\Pi$.
Since $p$ is a prime greater than $3$, the only Chevalley groups
which have a non-identity graph automorphism are of type $A_l$ for
$l \ge 2$, $D_l$ for $l \ge 4$ and $E_6$. We consider each case
separately.

Type $A_l$ with $l \ge 2$: In this case there is only one
non-identity symmetry of the graph. The symmetry sends $r_i$ to
$r_{l-i}$ for $1 \le i \le l$. The graph automorphism $\alpha_l$
associated with this symmetry has order $2$, it preserves $T$ and it
satisfies $\alpha_l(d_{r_i,t})=d_{r_{l-i},t}$ for $1 \le i \le l$
and $t \in \F_q^*$.

\begin{figure}[h!]
\begin{center}
\setlength{\unitlength}{3947sp}%
\begingroup\makeatletter\ifx\SetFigFont\undefined%
\gdef\SetFigFont#1#2#3#4#5{%
  \reset@font\fontsize{#1}{#2pt}%
  \fontfamily{#3}\fontseries{#4}\fontshape{#5}%
  \selectfont}%
\fi\endgroup%
\begin{picture}(2951,349)(881,80)
%  METADATA <id>1</id>
{\color[rgb]{0,0,0}\thinlines \put(946,130){\circle{84}}
}%
%  METADATA <id>4</id>
{\color[rgb]{0,0,0}\put(1419,129){\circle{84}}
}%
%  METADATA <id>5</id>
{\color[rgb]{0,0,0}\put(3309,129){\circle{84}}
}%
%  METADATA <id>6</id>
{\color[rgb]{0,0,0}\put(3782,129){\circle{84}}
}%
%  METADATA <id>8</id>
{\color[rgb]{0,0,0}\put(984,131){\line( 1, 0){385}}
}%
%  METADATA <id>11</id>
{\color[rgb]{0,0,0}\put(1462,131){\line( 1, 0){430}}
}%
%  METADATA <id>12</id>
{\color[rgb]{0,0,0}\multiput(1886,131)(126.00000,0.00000){8}{\line(
1, 0){ 63.000}}
}%
%  METADATA <id>13</id>
{\color[rgb]{0,0,0}\put(2834,131){\line( 1, 0){430}}
}%
%  METADATA <id>14</id>
{\color[rgb]{0,0,0}\put(3357,131){\line( 1, 0){377}}
}%
%  METADATA <id>19</id>
\put(896,220){\makebox(0,0)[lb]{\smash{{\SetFigFont{12}{14.4}{\rmdefault}{\mddefault}{\updefault}{\color[rgb]{0,0,0}$r_1$}%
}}}}
%  METADATA <id>20</id>
\put(1371,224){\makebox(0,0)[lb]{\smash{{\SetFigFont{12}{14.4}{\rmdefault}{\mddefault}{\updefault}{\color[rgb]{0,0,0}$r_2$}%
}}}}
%  METADATA <id>21</id>
\put(3260,217){\makebox(0,0)[lb]{\smash{{\SetFigFont{12}{14.4}{\rmdefault}{\mddefault}{\updefault}{\color[rgb]{0,0,0}$r_{l-1}$}%
}}}}
%  METADATA <id>22</id>
\put(3733,213){\makebox(0,0)[lb]{\smash{{\SetFigFont{12}{14.4}{\rmdefault}{\mddefault}{\updefault}{\color[rgb]{0,0,0}$r_l$}%
}}}}
\end{picture}%
\caption{Dynkin diagram of type $A_l$.}
\end{center}
\end{figure}

Type $D_4$: For every $\sigma\in\textrm{Sym}\{1,2,3,4\}$ with
$\sigma(1)=1$, there is a graph symmetry which transform $r_i$ to
$r_{\sigma(i)}$. The graph automorphism $\delta_\sigma$ associated
with this symmetry has the same order as $\sigma$, it preserves $T$
and it satisfies $\delta_\sigma(d_{r_i,t})=d_{r_{\sigma(i),t}}$ for
$1 \le i \le 4$ and $t \in \F_q^*$.

\begin{figure}[h!]
\begin{center}
%
%  Created by WinFIG version 4.2 
%  METADATA <version>1.0</version> 
%
\setlength{\unitlength}{3947sp}%
\begingroup\makeatletter\ifx\SetFigFont\undefined%
\gdef\SetFigFont#1#2#3#4#5{%
  \reset@font\fontsize{#1}{#2pt}%
  \fontfamily{#3}\fontseries{#4}\fontshape{#5}%
  \selectfont}%
\fi\endgroup%
\begin{picture}(1154,1202)(4313,-3182)
%  METADATA <id>25</id> 
{\color[rgb]{0,0,0}\thinlines
\put(4963,-2705){\circle{84}}
}%
%  METADATA <id>37</id> 
{\color[rgb]{0,0,0}\put(4968,-2231){\circle{84}}
}%
%  METADATA <id>38</id> 
{\color[rgb]{0,0,0}\put(5386,-2946){\circle{84}}
}%
%  METADATA <id>39</id> 
{\color[rgb]{0,0,0}\put(4546,-2955){\circle{84}}
}%
%  METADATA <id>26</id> 
{\color[rgb]{0,0,0}\put(4966,-2657){\line( 0, 1){383}}
}%
%  METADATA <id>30</id> 
{\color[rgb]{0,0,0}\put(5356,-2924){\line(-5, 3){351.471}}
}%
%  METADATA <id>31</id> 
{\color[rgb]{0,0,0}\put(4924,-2721){\line(-5,-3){351.471}}
}%
%  METADATA <id>32</id> 
\put(5083,-2675){\makebox(0,0)[lb]{\smash{{\SetFigFont{12}{14.4}{\rmdefault}{\mddefault}{\updefault}{\color[rgb]{0,0,0}$r_1$}%
}}}}
%  METADATA <id>33</id> 
\put(4962,-2151){\makebox(0,0)[lb]{\smash{{\SetFigFont{12}{14.4}{\rmdefault}{\mddefault}{\updefault}{\color[rgb]{0,0,0}$r_2$}%
}}}}
%  METADATA <id>34</id> 
\put(5452,-3097){\makebox(0,0)[lb]{\smash{{\SetFigFont{12}{14.4}{\rmdefault}{\mddefault}{\updefault}{\color[rgb]{0,0,0}$r_3$}%
}}}}
%  METADATA <id>35</id> 
\put(4328,-3109){\makebox(0,0)[lb]{\smash{{\SetFigFont{12}{14.4}{\rmdefault}{\mddefault}{\updefault}{\color[rgb]{0,0,0}$r_4$}%
}}}}
\end{picture}%
\caption{Dynkin diagram of type $D_4$.}
\end{center}
\end{figure}

Type $D_l$ with $l \ge 5$: In this case there is only one
non-identity symmetry of the graph. The symmetry switches between
$r_{l-1}$ and $r_{l}$ and leaves the other fundamental roots fixed.
The graph automorphism $\delta_l$ associated with this symmetry has
order $2$, it preserves $T$  and it satisfies
$\delta_l(d_{r_i,t})=d_{r_i,t}$, $\delta_l(d_{r_{l-1},t})=d_{r_l,t}$
and $\delta_l(d_{r_{l},t})=d_{r_{l-1},t}$ for $1 \le i \le l-2$ and
$t \in \F_q^*$.

\begin{figure}[h!]
\begin{center}
%
%  Created by WinFIG version 4.2 
%  METADATA <version>1.0</version> 
%
\setlength{\unitlength}{3947sp}%
\begingroup\makeatletter\ifx\SetFigFont\undefined%
\gdef\SetFigFont#1#2#3#4#5{%
  \reset@font\fontsize{#1}{#2pt}%
  \fontfamily{#3}\fontseries{#4}\fontshape{#5}%
  \selectfont}%
\fi\endgroup%
\begin{picture}(2873,687)(896,-197)
%  METADATA <id>1</id> 
{\color[rgb]{0,0,0}\thinlines
\put(946,130){\circle{84}}
}%
%  METADATA <id>4</id> 
{\color[rgb]{0,0,0}\put(1419,129){\circle{84}}
}%
%  METADATA <id>5</id> 
{\color[rgb]{0,0,0}\put(3309,129){\circle{84}}
}%
%  METADATA <id>26</id> 
{\color[rgb]{0,0,0}\put(3637,359){\circle{84}}
}%
%  METADATA <id>27</id> 
{\color[rgb]{0,0,0}\put(3640,-72){\circle{84}}
}%
%  METADATA <id>8</id> 
{\color[rgb]{0,0,0}\put(984,131){\line( 1, 0){385}}
}%
%  METADATA <id>11</id> 
{\color[rgb]{0,0,0}\put(1462,131){\line( 1, 0){430}}
}%
%  METADATA <id>12</id> 
{\color[rgb]{0,0,0}\multiput(1886,131)(126.00000,0.00000){8}{\line( 1, 0){ 63.000}}
}%
%  METADATA <id>13</id> 
{\color[rgb]{0,0,0}\put(2834,131){\line( 1, 0){430}}
}%
%  METADATA <id>23</id> 
{\color[rgb]{0,0,0}\put(3334,171){\line( 3, 2){252.923}}
}%
%  METADATA <id>24</id> 
{\color[rgb]{0,0,0}\put(3331, 99){\line( 5,-3){261.029}}
}%
%  METADATA <id>29</id> 
\put(2999,216){\makebox(0,0)[lb]{\smash{{\SetFigFont{12}{14.4}{\rmdefault}{\mddefault}{\updefault}{\color[rgb]{0,0,0}$r_{l-2}$}%
}}}}
%  METADATA <id>30</id> 
\put(3729,319){\makebox(0,0)[lb]{\smash{{\SetFigFont{12}{14.4}{\rmdefault}{\mddefault}{\updefault}{\color[rgb]{0,0,0}$r_{l-1}$}%
}}}}
%  METADATA <id>31</id> 
\put(3754,-124){\makebox(0,0)[lb]{\smash{{\SetFigFont{12}{14.4}{\rmdefault}{\mddefault}{\updefault}{\color[rgb]{0,0,0}$r_l$}%
}}}}
%  METADATA <id>32</id> 
\put(941,209){\makebox(0,0)[lb]{\smash{{\SetFigFont{12}{14.4}{\rmdefault}{\mddefault}{\updefault}{\color[rgb]{0,0,0}$r_1$}%
}}}}
%  METADATA <id>33</id> 
\put(1414,214){\makebox(0,0)[lb]{\smash{{\SetFigFont{12}{14.4}{\rmdefault}{\mddefault}{\updefault}{\color[rgb]{0,0,0}$r_2$}%
}}}}
\end{picture}%
\caption{Dynkin diagram of type $D_l$.}
\end{center}
\end{figure}

Type $E_6$: Also in this case case there is only one non-identity
symmetry of the graph. The symmetry sends $r_i$ to $r_{5-i}$ for $1
\le i \le 5$ and leaves $r_6$ fixed. The graph automorphism
$\varepsilon$ associated with this symmetry has order $2$, it
preserves $T$ and it satisfies $\varepsilon(d_{r_1,t})=d_{r_5,t}$,
$\varepsilon(d_{r_2,t})=d_{r_4,t}$,
$\varepsilon(d_{r_3,t})=d_{r_3,t}$ and
$\varepsilon(d_{r_6,t})=d_{r_6,t}$ for every $t \in \F_q^*$.

\begin{figure}[h!]
\begin{center}
%
%  Created by WinFIG version 4.2 
%  METADATA <version>1.0</version> 
%
\setlength{\unitlength}{3947sp}%
\begingroup\makeatletter\ifx\SetFigFont\undefined%
\gdef\SetFigFont#1#2#3#4#5{%
  \reset@font\fontsize{#1}{#2pt}%
  \fontfamily{#3}\fontseries{#4}\fontshape{#5}%
  \selectfont}%
\fi\endgroup%
\begin{picture}(1998,1029)(889,-644)
%  METADATA <id>1</id> 
{\color[rgb]{0,0,0}\thinlines
\put(946,130){\circle{84}}
}%
%  METADATA <id>4</id> 
{\color[rgb]{0,0,0}\put(1419,129){\circle{84}}
}%
%  METADATA <id>35</id> 
{\color[rgb]{0,0,0}\put(1892,129){\circle{84}}
}%
%  METADATA <id>36</id> 
{\color[rgb]{0,0,0}\put(2364,129){\circle{84}}
}%
%  METADATA <id>37</id> 
{\color[rgb]{0,0,0}\put(2837,129){\circle{84}}
}%
%  METADATA <id>38</id> 
{\color[rgb]{0,0,0}\put(1887,-318){\circle{84}}
}%
%  METADATA <id>8</id> 
{\color[rgb]{0,0,0}\put(984,131){\line( 1, 0){385}}
}%
%  METADATA <id>39</id> 
{\color[rgb]{0,0,0}\put(1464,131){\line( 1, 0){378}}
}%
%  METADATA <id>40</id> 
{\color[rgb]{0,0,0}\put(1939,134){\line( 1, 0){375}}
}%
%  METADATA <id>41</id> 
{\color[rgb]{0,0,0}\put(2411,134){\line( 1, 0){383}}
}%
%  METADATA <id>42</id> 
{\color[rgb]{0,0,0}\put(1886, 91){\line( 0,-1){370}}
}%
%  METADATA <id>43</id> 
\put(904,214){\makebox(0,0)[lb]{\smash{{\SetFigFont{12}{14.4}{\rmdefault}{\mddefault}{\updefault}{\color[rgb]{0,0,0}$r_1$}%
}}}}
%  METADATA <id>44</id> 
\put(1384,211){\makebox(0,0)[lb]{\smash{{\SetFigFont{12}{14.4}{\rmdefault}{\mddefault}{\updefault}{\color[rgb]{0,0,0}$r_2$}%
}}}}
%  METADATA <id>45</id> 
\put(1859,214){\makebox(0,0)[lb]{\smash{{\SetFigFont{12}{14.4}{\rmdefault}{\mddefault}{\updefault}{\color[rgb]{0,0,0}$r_3$}%
}}}}
%  METADATA <id>46</id> 
\put(2326,209){\makebox(0,0)[lb]{\smash{{\SetFigFont{12}{14.4}{\rmdefault}{\mddefault}{\updefault}{\color[rgb]{0,0,0}$r_4$}%
}}}}
%  METADATA <id>47</id> 
\put(2799,211){\makebox(0,0)[lb]{\smash{{\SetFigFont{12}{14.4}{\rmdefault}{\mddefault}{\updefault}{\color[rgb]{0,0,0}$r_5$}%
}}}}
%  METADATA <id>48</id> 
\put(1839,-571){\makebox(0,0)[lb]{\smash{{\SetFigFont{12}{14.4}{\rmdefault}{\mddefault}{\updefault}{\color[rgb]{0,0,0}$r_6$}%
}}}}
\end{picture}%
\caption{Dynkin diagram of type $E_6$.}
\end{center}
\end{figure}

The last set of automorphisms are the field automorphisms. Every
$\varphi\in\aut(\F_q)$ induces an automorphism
$\bar{\varphi}\in\aut(G_q)$, which is called a field automorphism,
such that $\bar{\varphi}(x_{r,t}):=x_{r,\varphi(t)}$ and
$\bar{\varphi}(d_{r,t})=d_{r,\varphi(t)}$ for every $r\in\Phi$ and
$t\in \F_q$.

\begin{lemma}\label{stable group} Let $r_1,\cdots,r_l$ be the fundamental roots of
$\Phi$.  Denote $d_t:=d_{r_1,t}\cdots d_{r_l,t}$ for every $t \in
\F_q^*$.  The set $D:=\{d_t \mid t \in F_p^*\}$ is a subgroup of
$G_q$ which is pointwise fixed by every diagonal, graph or field
automorphism. Furthermore,  the size of $D$ is $p-1$.
\end{lemma}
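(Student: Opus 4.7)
The plan has four strands, corresponding to the four claims packaged into the lemma: (i) $D$ is a subgroup, (ii) $D$ is fixed pointwise by diagonal automorphisms, (iii) $D$ is fixed pointwise by graph automorphisms, (iv) $D$ is fixed pointwise by field automorphisms, plus the cardinality statement $|D|=p-1$.

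First I would observe that each $d_{r_i,t}$ is diagonal in the Chevalley basis $\{h_r\}\cup\{e_r\}$, so the $d_{r_i,t}$'s commute with one another, and since $d_{r_i,t}d_{r_i,u}=d_{r_i,tu}$, the product $d_t=d_{r_1,t}\cdots d_{r_l,t}$ satisfies $d_td_u=d_{tu}$. Hence $t\mapsto d_t$ is a group homomorphism $\F_p^*\to T$ and $D$ is its image, so in particular a subgroup. For the invariance under diagonal automorphisms, I would simply note that these automorphisms are induced by conjugation by elements of the abelian group $\hat T\supseteq T\supseteq D$; since $\hat T$ is abelian, such conjugation fixes every element of $T$, and in particular every element of $D$.

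For the graph automorphisms, the key point is that each of the listed graph automorphisms $\alpha_l,\delta_\sigma,\delta_l,\varepsilon$ is determined by a permutation $\sigma$ of the fundamental roots via the formula $\alpha(d_{r_i,t})=d_{r_{\sigma(i)},t}$. Since $T$ is abelian and $d_t$ is the product of $d_{r_i,t}$ over \emph{all} $i$, any such permutation preserves $d_t$: $\alpha(d_t)=\prod_i d_{r_{\sigma(i)},t}=\prod_j d_{r_j,t}=d_t$. For the field automorphisms, $\bar{\varphi}(d_{r_i,t})=d_{r_i,\varphi(t)}$; since the prime subfield $\F_p$ is pointwise fixed by every $\varphi\in\Aut(\F_q)$, we have $\bar{\varphi}(d_t)=d_t$ for all $t\in\F_p^*$.

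The only non-trivial step is $|D|=p-1$, i.e.\ the injectivity of the homomorphism $t\mapsto d_t$. The main obstacle is that a priori a relation $t^{(h,s)}=1$ for all $s\in\Phi$, where $h=\sum_i h_{r_i}$, could force a nontrivial kernel in the adjoint setup. I would resolve this by using that $G_q$ is the \emph{simply connected} Chevalley group: in this form the map $(t_1,\ldots,t_l)\mapsto d_{r_1,t_1}\cdots d_{r_l,t_l}$ is an isomorphism $(\F_q^*)^l\xrightarrow{\sim}T$, so in particular the diagonal embedding $t\mapsto(t,\ldots,t)$ is injective. Hence $d_t=1$ forces $t=1$, and $|D|=|\F_p^*|=p-1$. (For the $A_l$ case with $l\ge 2$ one can alternatively check this directly from the explicit model inside $\SL_{l+1}(\F_q)$, e.g.\ for $A_2$ one has $d_t=\mathrm{diag}(t,1,t^{-1})$.)
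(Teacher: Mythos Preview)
Your arguments for (i)--(iv) are correct and match the paper's terse ``follows directly from the definitions''. For $|D|=p-1$, however, your approach diverges from the paper's and has a gap. The paper computes $d_t(e_s)=t^{m_s}e_s$ with $m_s=(h_{r_1}+\cdots+h_{r_l},s)$ and observes, by inspection of the Dynkin diagrams, that some fundamental root $s$ has $m_s=-1$; hence $d_t(e_s)=t^{-1}e_s$ and $d_t=\id$ forces $t=1$. This argument works directly from the action of $d_t$ on $\L_q$ and is insensitive to the isogeny type of $G_q$.

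Your argument instead invokes simple connectedness to conclude $T\cong(\F_q^*)^l$. Although the Notation subsection declares $G_q$ simply connected, the construction actually given realizes $G_q$ inside $\Aut(\L_q)$, which is the \emph{adjoint} group --- and this is confirmed later by the statement that $G_q$ has trivial center and by the table showing $[\hat T:T]>1$ in general. In the adjoint form the map $(t_1,\ldots,t_l)\mapsto\prod_i d_{r_i,t_i}$ is not injective (already in type $A_1$ its kernel is $\{\pm1\}$), so you cannot read off injectivity of $t\mapsto d_t$ from a structural isomorphism that does not hold here. You correctly flagged this as ``the main obstacle'' in the adjoint setup, but then resolved it by appealing to a hypothesis the paper's construction does not actually support; the paper's eigenvalue computation is precisely the ingredient that closes the gap.
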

\begin{proof} All the assertion follows directly from the definitions expect
the one about the size of $D$. If $s \in \Phi$ then
$d_t(e_s)=t^{m_s}e_s$ where $m_{s}:=(h_{r_1}+\cdots+h_{r_{l}},s)$.
Inspection of the Dynkin diagrams of all root systems of rank $l$
shows that there is always a fundamental root $s$ such that
$m_s:=(h_{r_1}+\cdots+h_{r_{l}},s)=-1$ so $d_t(s)=t^{-1}$. This
implies the $|D|=p-1$.
\end{proof}
\begin{lemma}\label{enough semi simple elements 2} Fix $n \in
\mathbb{N}^+$ and $\alpha \in (0,1)$ and define $D$ as in the above
lemma. There exists a constant $c$ such that if $p \ge c$ and $H$ is
a subgroup of $G_q$ such that $D \subseteq H \subseteq T$ then
$|H_n| \ge \alpha|H|$ where $H_n:=\{h \in H \mid
\cen_{G_q}(h^n)=T\}$.
\end{lemma}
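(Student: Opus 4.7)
The approach is to describe $H \setminus H_n$ explicitly in terms of the root characters on $T$, and then bound each piece using the fact that $D \subseteq H$. The first step is to verify that for $h \in T$ one has $\cen_{G_q}(h) = T$ if and only if $\chi_r(h) \neq 1$ for every $r \in \Phi$, where $\chi_r : T \to \F_q^*$ denotes the character $h \mapsto$ (eigenvalue of $h$ on $e_r$). This uses the standard fact that the centralizer in $G_q$ of a semisimple element of $T$ is generated by $T$ together with the root subgroups $U_r$ that commute with it, combined with the conjugation identity $h \, x_{r,t}\, h^{-1} = x_{r,\, t\chi_r(h)}$ which shows $U_r$ commutes with $h$ iff $\chi_r(h) = 1$. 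Applying this to $h^n$ gives
\[
H \setminus H_n \;=\; \bigcup_{r \in \Phi} B_r, \qquad B_r := \ker\bigl(\chi_r^n|_H\bigr).
\]

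The second step is to bound $|B_r|/|H|$ uniformly in $H$. Since $B_r$ is the kernel of a homomorphism $H \to \F_q^*$, we have $|B_r|/|H| = 1/|\chi_r^n(H)| \le 1/|\chi_r^n(D)|$. A direct computation from $d_{r_i,t}(e_s) = t^{(h_{r_i},s)}e_s$ gives $\chi_r(d_t) = t^{m_r}$ with integer $m_r := \sum_{i=1}^l (h_{r_i}, r)$ depending only on $\Phi$ and $r$. When $m_r \neq 0$, the image $\chi_r^n(D) = \{t^{n m_r} : t \in \F_p^*\}$ has order $(p-1)/\gcd(p-1, n m_r) \ge (p-1)/(n|m_r|)$ for $p$ large. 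Taking a union bound over the finitely many $r \in \Phi$ then yields $|H \setminus H_n|/|H| \le 1 - \alpha$ provided $p \ge c$ for a constant $c = c(\Phi, n, \alpha)$.

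The main obstacle is handling roots $r$ for which $m_r = 0$: for such roots the image $\chi_r^n(D)$ is trivial and the crude bound through $D$ yields nothing. (For instance, in type $A_l$ with $l \ge 3$, the interior fundamental roots have vanishing column sum of the Cartan matrix.) To handle these, one must use the containment $H \subseteq T$ together with the fact that $\chi_r$ is always nontrivial as a character on $T$, since its coordinate vector $((h_{r_1},r), \ldots, (h_{r_l},r))$ is nonzero (the Cartan matrix being nonsingular). The task is to transfer nontriviality on $T$ to a quantitative lower bound on $|\chi_r^n(H)|$ valid for every intermediate subgroup $D \subseteq H \subseteq T$; once this is done, the union-bound argument of the previous paragraph concludes the proof. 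I expect this transfer step to be the technical core, and it is the point at which the special role of the ``anchor'' subgroup $D$ (and perhaps an index bound of $T/D$ coming from a case inspection of the Dynkin diagrams) enters essentially.
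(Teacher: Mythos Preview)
Your approach differs from the paper's. The paper does not write $H\setminus H_n$ as a union of kernels; instead it runs a coset argument along $D$: for each fixed $h\in H$ it counts the number of $t\in\F_p^*$ with $hd_t\in H_n$, using the sufficient criterion of Corollary~\ref{reg semi simple-corol} (which requires, in addition to $\lambda_s^n\neq 1$, the separation condition between positive and negative roots). The paper's count relies on the assertion that $m_s\neq 0$ for every $s\in\Phi$, which it justifies by a sign argument (``if $s\in\Phi^+$ then $m_s<0$'').

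The obstacle you flag is genuine and is not, in fact, avoided by the paper's argument. Your computation is correct: for the interior simple root $\alpha_2$ in type $A_3$ one has $m_{\alpha_2}=\sum_i(h_{r_i},\alpha_2)=-1+2-1=0$, so the paper's sign claim fails. More seriously, the lemma as stated is false for $H=D$ in type $A_l$ with $l\ge 3$: since $\chi_{\alpha_2}(d_t)=t^{m_{\alpha_2}}=1$ for every $t$, each $d_t^n$ commutes with the whole root subgroup $U_{\alpha_2}$, hence $\cen_{G_q}(d_t^n)\supsetneq T$ and $D_n=\emptyset$. Thus no choice of $c$ can make $|D_n|\ge\alpha|D|$ hold. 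The ``transfer step'' you were looking for, from nontriviality of $\chi_r$ on $T$ to a uniform lower bound on $|\chi_r^n(H)|$ for all $D\subseteq H\subseteq T$, simply does not exist.

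What does survive is the intended application (Corollary~\ref{enough fixed points2}), where $H=T_\mu$ for $\mu$ a product of diagonal, graph and field automorphisms. For such $H$ one has more structure than the bare inclusion $D\subseteq H$: for example, if $\mu=\varphi\eta$ with $\varphi$ a field automorphism and $\eta$ a graph automorphism, then $T_\mu$ contains the full $\F_p$-rational split torus fixed by $\eta$, on which each $\chi_r$ is nontrivial. So the right fix is not to salvage the lemma for arbitrary $H$ between $D$ and $T$, but to replace $D$ by a larger ``anchor'' subgroup on which every $\chi_r$ is nontrivial (e.g.\ the element $\prod_i d_{r_i,t^{c_i}}$ with $c_i$ the marks of $\rho^\vee$, giving $m_s=\mathrm{ht}(s)\neq 0$), or to restrict the hypothesis on $H$ to the cases actually used. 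Your identification of the gap is on target; the resolution lies in changing the statement, not in finding the missing argument.
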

\begin{proof}  Let $r_1,\cdots,r_l$
be the fundamental roots of $\Phi$ and $d_t$ as in the above lemma.
It suffices to show that if $p$ is large enough that for every $h
\in H$ the number of $t\in (\F_p^*)$ such that $hd_t\in H_n$ is at
least $\alpha(p-1)$. Fix $h \in H$ and assume $h(e_s)=\lambda_se_s$
for every $s\in \Phi$. If $s \in \Phi$ then
$$hd_t(e_s)=\lambda_st^{m_s}e_s$$
where $m_{s}:=(h_{r_1}+\cdots+h_{r_{l}},s)$. Denote $m:=\max
|m_{s}|$ where the maximum is taken over $s \in \Phi$. Lemma
\ref{reg semi simple} shows then if $t \in \F_p^*$ then $hd_t\in
H_n$ whenever the following two conditions hold:
\begin{itemize}
\item[1.] For every $s \in \Phi$ the product $\lambda_s^n t^{nm_{s}}$ is different from 1.
\item[2.] If $s_1\in \Phi^+$ and $s_2 \in \Phi^-$ then
$\lambda_{s_1}^n t^{nm_{s_1}}\ne \lambda_{s_2}^n t^{nm_{s_2}}$.
\end{itemize}
If $s\in \Phi^+$ ($s\in \Phi^-$) then $s$ is a linear sum of
elements of $\Pi$ where all the coefficients in this sum are
non-negative (non-positive). In addition, the scalar product of any
two distinct fundamental roots is non-positive and $s$ can not be
orthogonal to all the fundamental roots. Hence, if $s \in \Phi^+$
then $m_s<0$ while if $s \in \Phi^-$ then $m_s>0$. Thus, for $s \in
\Phi$ the number of $t \in \F_p^*$ which does not satisfy condition
1 is at most $nm$. Similarly, for $s_1\in \Phi^+$ and $s_2 \in
\Phi^-$ the number of $t \in \F_p^*$ which does not satisfy
condition 2 is at most  $2nm$. Therefore, the number of $t\in
\F_p^*$ with $hd_t\in H_n$ is at least
$$(p-1)-nm|\Phi|-\frac{nm}{2}|\Phi|^2$$ which is
 greater than $\alpha(p-1)$
for large enough $p$.
\end{proof}
The following is a first step to verify condition $2$ of Proposition
\ref{power proportion prop}.
\begin{corol}\label{enough fixed points2} Let $n\in \N^+$ and $\alpha \in (0,1)$.
There exists a constant $w$ such that for every $p\ge w$ and every
$\mu \in \aut(G_q)$ the following holds:

If $\mu$ stabilizes $T$ and pointwise fixes $D$ then $|T_{\mu,n}|\ge
\alpha|T_\mu|$. In particular, this it true for $\mu$ which is a
product of field, graph and diagonal automorphisms
\end{corol}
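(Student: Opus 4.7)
The plan is to reduce the statement directly to Lemma \ref{enough semi simple elements 2} applied with the subgroup $H := T_\mu$. First I observe that $T_\mu$ is indeed a subgroup of $T$: it is the fixed-point set of the automorphism $\mu$ restricted to the abelian group $T$, which $\mu$ preserves by hypothesis. The containment $D \subseteq T_\mu$ is exactly the second hypothesis that $\mu$ pointwise fixes $D$. Hence $D \subseteq H \subseteq T$, so the lemma applies and yields a constant $w$ (depending only on $n$ and $\alpha$) such that $|H_n| \ge \alpha |H|$ whenever $p \ge w$, where $H_n = \{h \in H \mid \cen_{G_q}(h^n)=T\}$. Unwinding the definitions, $H_n = T_\mu \cap T_n = T_{\mu,n}$, so this is exactly the desired inequality $|T_{\mu,n}| \ge \alpha |T_\mu|$.

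For the ``in particular'' clause I need to check that an arbitrary product $\mu$ of field, graph and diagonal automorphisms stabilizes $T$ and pointwise fixes $D$. Stability of $T$ is recorded in the descriptions of each of the three families: diagonal automorphisms are induced by conjugation by elements of the abelian overgroup $\hat{T} \supseteq T$; each graph automorphism is constructed so as to permute the generators $d_{r_i,t}$ of $T$; and a field automorphism $\bar{\varphi}$ sends $d_{r,t}$ to $d_{r,\varphi(t)} \in T$. The pointwise fixing of $D$ is exactly the content of Lemma \ref{stable group}. Since the subset of $\aut(G_q)$ consisting of automorphisms that stabilize $T$ and pointwise fix $D$ is closed under composition, the claim follows for arbitrary products.

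There is no genuine obstacle: the only mild point is that the conclusion is non-vacuous because $|T_\mu| \ge |D| = p-1$ by Lemma \ref{stable group}, so the factor $\alpha$ acts on a set whose size grows with $p$. The quantitative threshold $w$ is inherited from Lemma \ref{enough semi simple elements 2} and thus depends only on $n$, $\alpha$ and the root system $\Phi$, not on the particular automorphism $\mu$ chosen.
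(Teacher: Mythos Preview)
Your proposal is correct and is precisely the argument the paper has in mind: the corollary is stated immediately after Lemma~\ref{enough semi simple elements 2} without proof because it is simply that lemma applied with $H=T_\mu$, together with Lemma~\ref{stable group} and the recorded action of diagonal, graph and field automorphisms on $T$ for the ``in particular'' clause. There is nothing to add.
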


We close this subsection with a further discussion about
automorphisms.  As before let $p$ be a prime number greater than $3$
and let $q:=p^k$ for $k\in \N^+$. The group $G_q$ is simple and we
regard it as a normal subgroup of its automorphism group. Every
automorphism of $G_q$ is a product of the form
$\varphi\eta\kappa\iota$ where $\iota$ is an inner automorphism,
$\kappa$ is a diagonal automorphism, $\eta$ a graph automorphism and
$\varphi$ is a field automorphism. In Subsection \ref{subsection
-products} we will need a bound on the order of $\varphi\eta\kappa$
so we investigate this product. Graph and field automorphisms
commute and both normalize the group of diagonal automorphisms
$\hat{T}_q$. Hence, the order of $\varphi\eta\kappa$ is bounded by
the product $\ord(\varphi)\ord(\eta)\ord(\kappa)$ since the group of
diagonal automorphisms $\hat{T}$ is abelian. Thus, it is enough to
bound the order of $\varphi$, $\eta$ and $\kappa$ separately. The
order of a graph automorphism is at most $3$ while the order of a
field automorphism of $G_{q}$ divides $k$ (since $q=p^k$). However,
a diagonal automorphisms can have arbitrary large orders if $p$ is
large. To overcome this problem we note that $T \subseteq \hat{T}$
so in the presentation of an automorphism as a product
$\varphi\eta\kappa\iota$ the diagonal automorphism $\kappa$ can be
replaced by others automorphisms of the coset $\kappa T$. We need
the following group theoretic lemma:
\begin{lemma}\label{group theory} Let $G$ be a finite group which contains
a normal subgroup $H$. Then every coset of $H$ is $G$ has a
representative $g$ such that every prime divisor of $\ord(g)$ also
divides $[G:H]$.
\end{lemma}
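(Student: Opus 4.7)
Let $n = [G:H]$ and let $\pi$ be the set of prime divisors of $n$. The statement asks, for each coset $gH$, to produce a representative whose order is a $\pi$-number.

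My plan is to start from an arbitrary $x$ in the coset $gH$ and modify it by an element of $H$ to kill the ``bad'' part of its order. Write $\ord(x) = ab$ where $a$ is the $\pi$-part and $b$ is the $\pi'$-part; by definition $\gcd(a,b) = 1$. Using Bezout coefficients $\alpha a + \beta b = 1$, set $u := x^{\beta b}$ and $v := x^{\alpha a}$. Then $u$ and $v$ are powers of $x$ (so they commute), $x = uv$, $\ord(u) = a$, and $\ord(v) = b$. This is the standard decomposition of an element into its $\pi$-part and $\pi'$-part inside the cyclic group $\langle x \rangle$.

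Now comes the key observation. The image $\bar v$ of $v$ in the quotient $G/H$ satisfies $\ord(\bar v) \mid \ord(v) = b$ and $\ord(\bar v) \mid |G/H| = n$. Since $b$ is a $\pi'$-number and $n$ is a $\pi$-number, these two integers are coprime, so $\ord(\bar v) = 1$, i.e.\ $v \in H$. Consequently $u = xv^{-1}$ lies in $xH = gH$, and by construction every prime divisor of $\ord(u) = a$ lies in $\pi$, i.e.\ divides $[G:H]$. Thus $u$ is the desired representative.

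There is no real obstacle: the argument is a routine application of the $\pi$/$\pi'$ decomposition in a cyclic group, combined with the observation that an element of $\pi'$-order must map trivially to a group of $\pi$-order. The only thing to double-check is that $\ord(u)$ is exactly $a$ (not a proper divisor), which follows because $\beta$ is a unit mod $a$ (from $\alpha a + \beta b \equiv 1 \pmod a$); but in fact we only need the weaker statement that $\ord(u) \mid a$, which is immediate from $u^a = x^{\beta a b} = 1$, and this weaker statement already suffices for the conclusion.
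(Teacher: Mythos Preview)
Your proof is correct. The paper's argument is a close cousin but organised differently: rather than decomposing a single element $x$ into its $\pi$-part and $\pi'$-part inside $\langle x\rangle$, the paper observes that for each prime $p\nmid [G:H]$ and any sufficiently large $p$-power $q$, the map $g\mapsto g^q$ induces a bijection on $G/H$ (since $\gcd(q,[G:H])=1$) and simultaneously kills the $p$-part of every order; iterating over the finitely many such primes yields the desired representative. Your approach handles all bad primes in one stroke via the Bezout decomposition and is arguably cleaner, while the paper's version makes more visible the global fact that the power map hits every coset. Both rest on the same core observation that an element whose order is coprime to $[G:H]$ must lie in $H$.
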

\begin{proof} If $p$ is a prime which does not divide $[G:H]$
and $q$ is a power of $p$ then the set $R:=\{g^q \mid g \in G\}$
contains a representative of each coset. If in addition $q$ is large
enough then  $p$ does not divide the order of any one of the
elements of $R$.
\end{proof}

The index $[\hat{T}:T]$ depends on the type of the root system and
the field $\F_q$ and it is as follows:
$$\begin{array}{ccccccccc}
A_l & B_l & C_l & D_l & G_2 & F_4 & E_6 & E_7 & E_8 \\
\gcd (l+1,q-1) & 2 & 2 & (4,q^l-1) & 1 & 1 & \gcd (3,q-1) & 2 & 1
\end{array}$$

\begin{lemma}\label{bounded order representitive.} Let $k \in \N^+$ such that
$q:=p^k$. If $\xi\in\aut(G_q)$ then there are:  a field automorphism
$\varphi$ , a graph automorphism $\eta$, a diagonal automorphism
$\kappa$ and an inner automorphism $\iota$ such that
$\xi:=\varphi\eta\kappa\iota$ and the order of
$\mu:=\varphi\eta\kappa$ divides $6kz$ where $z$ divides $q-1$ and
every prime factor of $z$ divides $6(l+1)$.
\end{lemma}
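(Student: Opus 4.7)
The plan is to take the standard factorization $\xi=\varphi\eta\kappa\iota$ as the starting point and to modify only the diagonal piece $\kappa$, absorbing the modification into $\iota$. Specifically, since $T\subseteq G_q$, every element of $T$, when viewed in $\aut(G_q)$, acts as an inner automorphism; so if we replace $\kappa$ by any other representative $\kappa'$ of the coset $\kappa T$ inside $\hat T$, the product $\varphi\eta\kappa'\iota'$ (with $\iota':=(\kappa'\kappa^{-1})^{-1}\iota$) still equals $\xi$ and still has the required form. Thus we are free to choose $\kappa'\in\kappa T$ to minimize its order in $\hat T$.

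The tool for this is Lemma~\ref{group theory}. Applying it with $G=\hat T$ and $H=T$, we get a representative $\kappa'\in\kappa T$ whose order $z:=\ord(\kappa')$ has the property that every prime factor of $z$ divides $[\hat T:T]$. Since $\hat T$ has exponent dividing $q-1$, we automatically have $z\mid q-1$. To see that every prime factor of $z$ divides $6(l+1)$, I will run through the table listing $[\hat T:T]$ type by type: for $A_l$ the index divides $l+1$; for $B_l,C_l,E_7$ it is $2$; for $D_l$ it divides $4$; for $E_6$ it divides $3$; and for $G_2,F_4,E_8$ it is $1$. In each of these cases every prime factor belongs to $\{2,3\}\cup\{\text{primes dividing }l+1\}$, which is contained in the set of primes dividing $6(l+1)$.

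It remains to bound $\ord(\mu)$ where $\mu:=\varphi\eta\kappa'$. The key structural input (recalled just before the lemma) is that $\varphi$ and $\eta$ commute and both normalize $\hat T$, while $\hat T$ is abelian. Writing $g:=\varphi\eta$, we have $\mu=g\kappa'$, and expanding gives
\[
\mu^{n}=g^{n}\prod_{i=0}^{n-1}g^{-i}(\kappa'),
\]
the product making sense in the abelian group $\hat T$. Taking $n=6k$ kills the $g$-factor because $\ord(\varphi)\mid k$ and $\ord(\eta)\mid 6$, so $\mu^{6k}\in\hat T$ is a product of $g$-conjugates of $\kappa'$; each such conjugate has order dividing $z$, and $\hat T$ is abelian, so $\ord(\mu^{6k})\mid z$. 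Consequently $\ord(\mu)\mid 6kz$, which is the desired bound.

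The only genuinely delicate point is the case analysis in the second paragraph: one must actually check that $[\hat T:T]$ is built out of the primes dividing $6(l+1)$ in every type, which requires the explicit table given in the excerpt. Everything else is either bookkeeping (the absorption of $\kappa'\kappa^{-1}$ into the inner part) or the standard norm/abelian-group computation in the third paragraph.
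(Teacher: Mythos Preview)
Your proof is correct and follows the paper's approach exactly: apply Lemma~\ref{group theory} to the pair $T\le\hat T$ to replace $\kappa$ by a coset representative whose order has only prime factors dividing $[\hat T:T]$, then read off from the type-by-type table that those primes divide $6(l+1)$. Your explicit computation of $\mu^{6k}$ as a product of $g$-conjugates of $\kappa'$ inside the abelian group $\hat T$ is precisely the justification the paper summarizes in the sentence ``the order of $\varphi\eta\kappa$ is bounded by the product $\ord(\varphi)\ord(\eta)\ord(\kappa)$'' in the discussion preceding the lemma.
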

\begin{proof} The above discussion implies that it is enough
to show that $\xi$ can be written in the form
$\xi=\eta\varphi\kappa\iota$ such that $z:=\ord(\kappa)$ satisfies
the required properties. The order of the elements of $\hat{T}$
divides $q-1$ and Lemma \ref{group theory} shows that $\kappa$ can
be chosen such that the prime factors of its order divide
$[\hat{T}:T]$. The above table shows that the primes which divide
$[\hat{T}:T]$ also divide $6(l+1)$.
\end{proof}

The point of the Lemma \ref{bounded order representitive.} is that
for a given $r \in \N^+$ there is a suitable arithmetic progression
$(a+bj)_{j\in \N}$ with coprime $a$ and $b$ such that if $q=p^k$
where $k \le r$ and $p$ is a prime which belongs to the arithmetic
progression then the product $\varphi\eta\kappa$ has bounded order.

\subsection{Steinberg groups}

Assume that $\Phi$ has a non trivial symmetry of the Dynkin diagram,
i.e, $\Phi$ is of one of the type: $A_l$ for $l\ge 2$, $D_l \ge 4$
or $E_6$. Let $\alpha\in \aut (G_q)$ be the graph automorphism
associated to such a symmetry, then $\ord(\alpha)=2$ unless $\Phi$
is of type $D_4$ where it is also possible that $\ord(\alpha)=3$.
Assume $q=p^k$ where $k \in \N^+$ is divisible by $\ord(\alpha)$ and
let $\beta$ be a field automorphism with the same order as $\alpha$
(recall that the automorphism group of $\F_{p^r}$ is cyclic of order
$r$). Let $U_\gamma$($V_\gamma$) be the subgroup of the unipotent
group $U$ ($V$) of the elements fixed by $\gamma:=\alpha\beta$. The
Steinberg group $G_q^*$ of type $\Phi$ over $\F_q$ is the subgroup
of $G_q$ generated by $U_\gamma$ and $V_\gamma$. The group $G_q^*$
is fixed by $\gamma$ but it can be properly contained in the
subgroup of fixed points. Note that for $p \ge 5$ the  Steinberg
groups are only defined when the root system is of type $A_l$ for
$l\ge 2$, $D_l \ge 4$ or $E_6$ and there is a field automorphism of
$\F_q$ of the same order as a non-trivial graph automorphism. If
$\Phi$ is of type different than $D_4$, then there is just one
non-trivial graph automorphism and its order is $2$, so $G_q^*$ is
uniquely defined and $q$ is a square. On the other hand, if the type
is $D_4$ then there are $3$ non-trivial graph automorphisms of order
$2$ and $2$ graph automorphisms of order $3$. However, it is easily
verified that up to isomorphism the Steinberg group depends only on
the order of the graph automorphism. Thus, if $6$ divide $k$ and
$q=p^k$ the symbol $G_q^*$ can represent two different Steinberg
groups. This will not cause us problems since both groups share the
properties we are concerned with.

A common notation for the Steinberg groups is ${}^2A_l(q^2)$,
${}^2D_l(q^2)$, ${}^3D_4(q^3)$ and ${}^2E_6(q^2)$ where for instance
${}^3D_4(q^3)$ is the Steinberg group corresponding to the graph
automorphism of order $3$ of $G_{q^3}$. We prefer not to use this
notation since our arguments does not depend on the type of the root
system. On the other hand, our arguments will depend on the fact
that the Steinberg groups are subgroups of the Chevalley groups and
the symbol $G_q^*$ emphasis this.

Let $G_q^*$ be an  Steinberg subgroup. Define $T^*:=T \cap G_q^*$
and $N^*:=N \cap G_q^*$ where $T$ and $N$ are the subgroup of $G_q$
defined above. The  proof of theorem $13.7.2$ in \cite{Ca} shows
that $D$ is a subgroup of $T^*$ where $D$ is defined in Lemma
\ref{stable group}. If $p$ is large enough then Corollary
\ref{enough fixed points2} shows that $D$ contains an element whose
centralizer is $T$, so $T^*$ is a maximal abelian subgroup of
$G_q^*$. In turn, Lemma \ref{mormalizer lemma} implies that
$N^*=N_{G_p^*}(T^*)$. In particular, we have $[N_{G_p^*}(T^*):T^*]
\le [N:T]=|W(\Phi)|$ where $W(\Phi)$ is the Weyl group of $\Phi$.

Next, we want to discuses the automorphisms of the Steinberg group
$G_q^*$ (see \cite{St}). The group $G_q^*$ has a trivial center
(since $p \ge 5$) so we can view it as a subgroup of its
automorphism group. The group of diagonal automorphisms $\hat{T}^*$
of $G_q^*$ consists of the restrictions of the automorphisms which
belong to $\hat{T}$ and stabilize $G_q^*$. Note that diagonal
automorphisms of $G_q^*$ fix $T^*$ pointwise. The field
automorphisms of $G_q^*$ are the restrictions of the field
automorphism of $G_q$, they also stabilize $T^*$. Lemma \ref{enough
semi simple elements 2} gives an analog of Corollary \ref{enough
fixed points2}:

\begin{corol}\label{enough fixed points3} Let $n\in \N^+$ and $\alpha \in (0,1)$.
There exists a constant $c$ such that for every $p\ge c$ and every
$\mu^* \in \aut(G_q^*)$ the following holds:

If $\mu^* $ stabilizes $T^*$ and pointwise fixes $D$ then
$|T^*_{\mu^*,n}|\ge \alpha|T^*_{\mu^*}|$. In particular, this is
true for $\mu$ which is a product of field and diagonal
automorphisms.
\end{corol}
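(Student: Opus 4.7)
The plan is to reduce this to the Chevalley-group statement Lemma \ref{enough semi simple elements 2} already proved, by applying it with $H := T^*_{\mu^*}$. For this to make sense one needs $D \subseteq H \subseteq T$: the inclusion $H \subseteq T$ is automatic because $T^* = T \cap G_q^* \subseteq T$, while $D \subseteq H$ follows from the hypothesis that $\mu^*$ pointwise fixes $D$ together with $D \subseteq T^*$ (the latter was noted in the excerpt and comes from Theorem 13.7.2 of \cite{Ca}). So Lemma \ref{enough semi simple elements 2} applies and yields, for $p$ larger than some $c = c(n,\alpha)$,
\[
|H_n| \ge \alpha\, |H|, \qquad H_n := \{h \in H \mid \cen_{G_q}(h^n) = T\}.
\]

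The main point will then be to translate this regularity inside $G_q$ into regularity inside $G_q^*$: I claim $H_n \subseteq T^*_{\mu^*,n}$. Indeed, if $h \in H_n$ then $h \in H = T^*_{\mu^*}$ so $h$ is already $\mu^*$-fixed and lies in $T^*$; furthermore
\[
\cen_{G_q^*}(h^n) = \cen_{G_q}(h^n) \cap G_q^* = T \cap G_q^* = T^*,
\]
so $h \in T^*_{\mu^*,n}$. Combining the two inequalities gives $|T^*_{\mu^*,n}| \ge |H_n| \ge \alpha |T^*_{\mu^*}|$, which is the asserted bound.

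For the ``In particular'' clause, I need to check that when $\mu^*$ is a product of field and diagonal automorphisms of $G_q^*$, the two hypotheses of the corollary hold. The excerpt explicitly records that diagonal automorphisms of $G_q^*$ fix $T^*$ pointwise and that field automorphisms of $G_q^*$ stabilize $T^*$, so any product stabilizes $T^*$. Lemma \ref{stable group} states that $D$ is pointwise fixed by every diagonal, graph, or field automorphism of $G_q$, and since the automorphisms of $G_q^*$ under consideration are restrictions of such automorphisms of $G_q$, they fix $D$ pointwise as well.

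I do not anticipate a serious obstacle: the only delicate point is the identification $\cen_{G_q^*}(h^n) = \cen_{G_q}(h^n) \cap G_q^*$, which is a tautology, together with the observation that $\cen_{G_q}(h^n) = T$ forces $\cen_{G_q}(h^n) \cap G_q^* = T^*$ by the definition $T^* := T \cap G_q^*$. Thus the entire argument amounts to verifying the set-theoretic inclusion above and invoking the already proved Chevalley analog.
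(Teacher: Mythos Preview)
Your proposal is correct and matches the paper's approach exactly: the paper simply says that Lemma~\ref{enough semi simple elements 2} yields this corollary as an analog of Corollary~\ref{enough fixed points2}, and you have spelled out precisely how---applying the lemma with $H=T^*_{\mu^*}$ and then passing from $\cen_{G_q}(h^n)=T$ to $\cen_{G_q^*}(h^n)=T^*$ via intersection with $G_q^*$.
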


As for Chevalley groups, every automorphism of $G_q^*$ is a product
of the form $\varphi\kappa\iota$ where $\iota$ is an inner
automorphism, $\kappa$ is a diagonal automorphism and $\varphi$ is a
field automorphism (there in no need for graph automorphisms). Note
that this fact does not follow directly from the equivalent fact for
Chevalley groups and requires a sperate proof which can be found in
\cite{St}. The index of $T^*$ in $\hat{T}^*$ is given in the
following table:
$$\begin{array}{cccc}
{}^2A_l(q^2) & {}^2D_l(q^2) & {}^3D_4(q^3) & {}^2E_6(q^2)  \\
\gcd (l+1,q+1) & \gcd (4,q^l+1)& 1 & (3,q+1)
\end{array}$$

We get an analog of lemma \ref{bounded order representitive.}:

\begin{lemma}\label{bounded order representitive2} Let $k \in \N^+$ such that
$q=p^k$. If $\xi\in\aut(G_q^*)$, then there are: a field
automorphism $\varphi$ , a diagonal automorphism $\kappa$ and an
inner automorphism $\iota$ such that $\xi:=\varphi\kappa\iota$ and
the order of $\mu^*:=\varphi\kappa$ divides $kz$ where $z$ divides
$q-1$ and every prime factor of $z$ divides $6(l+1)$.
\end{lemma}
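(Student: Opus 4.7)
The argument is parallel to the Chevalley case (Lemma \ref{bounded order representitive.}) but simplified: for Steinberg groups there is no graph factor, since Steinberg's theorem (\cite{St}, recalled in the text above) already yields a decomposition $\xi = \varphi \kappa \iota$ with $\varphi$ a field automorphism, $\kappa$ a diagonal automorphism and $\iota$ inner. Set $\mu^* := \varphi\kappa$; it remains to bound $\ord(\mu^*)$ and to control the prime factors appearing in its diagonal part.

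First I would bound the two factors of $\mu^*$ separately. Since field automorphisms normalise the abelian group $\hat{T}^*$ of diagonal automorphisms, the element $\varphi\kappa$ lies in the semidirect product $\hat{T}^* \rtimes \langle\varphi\rangle$, and a direct computation using abelianness of $\hat{T}^*$ gives $\ord(\varphi\kappa) \mid \ord(\varphi)\ord(\kappa)$. The field factor satisfies $\ord(\varphi) \mid k$ because $\aut(\F_{p^k})$ is cyclic of order $k$. For the diagonal factor I would first improve the representative: because $T^* \subseteq G_q^*$, every element of $T^*$ acts on $G_q^*$ by conjugation, i.e.\ as an inner automorphism, so substituting $\kappa \mapsto \kappa t$ together with $\iota \mapsto t^{-1}\iota$ for any $t \in T^*$ preserves $\xi$. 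Applying Lemma \ref{group theory} to $\hat{T}^*$ with subgroup $T^*$ now furnishes a representative whose order $z := \ord(\kappa)$ has prime factors only among those of the index $[\hat{T}^*:T^*]$. Since $\hat{T}^* \subseteq \hat{T}$ and $\hat{T}$ has exponent $q-1$, this $z$ automatically divides $q-1$.

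Finally I would verify the prime condition by inspecting the table of indices $[\hat{T}^*:T^*]$ listed just before the lemma: for the four Steinberg types the indices are $\gcd(l+1,q+1)$, $\gcd(4,q^l+1)$, $1$, and $\gcd(3,q+1)$, whose prime divisors in every case lie in $\{2,3\} \cup \{\textrm{primes dividing } l+1\}$ and therefore divide $6(l+1)$. Combining the three bounds gives $\ord(\mu^*) \mid kz$ with $z \mid q-1$ and all prime divisors of $z$ dividing $6(l+1)$, as required. The only non-formal point is the absorption argument in the previous step: one must be certain that modifying $\kappa$ inside $\hat{T}^*$ by an element of $T^*$ does not disturb the field factor or force a re-introduction of a graph piece, and this is precisely where the inclusion $T^* \subseteq G_q^*$ is essential, since it identifies $T^*$-conjugation on $G_q^*$ with an inner automorphism that can be freely absorbed into $\iota$.
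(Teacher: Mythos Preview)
Your proof is correct and follows precisely the route the paper intends: the paper does not give a separate proof of this lemma but simply states it as the analogue of Lemma~\ref{bounded order representitive.}, and your argument carries out that analogy in full (dropping the graph factor, invoking Lemma~\ref{group theory} for $T^*\le\hat T^*$, and reading off the prime divisors from the $[\hat T^*:T^*]$ table). One cosmetic remark: $\hat T^*$ is defined as the set of \emph{restrictions} to $G_q^*$ of $\hat T$-elements stabilising $G_q^*$, so ``$\hat T^*\subseteq\hat T$'' is a slight abuse; the correct statement is that every $\kappa\in\hat T^*$ lifts to some $\tilde\kappa\in\hat T$, whence $\ord(\kappa)\mid\ord(\tilde\kappa)\mid q-1$, which is exactly what you need.
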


With the notations as above, we conclude:

\begin{corol}\label{final reduction} Let $d\in \N^+$ and fix a prime number $m$. Assume that $p$ is a large enough prime which
belongs to the series $(a+bj)_{j \in \N}$ and $q = p^k$ for $k \le
d$ where $a:=1+6m(l+1)^2d!^2$ and $b:=36m^2(l+1)^4d!^4$. Then for
every $\xi \in \aut(G_q)$ and every $\xi^* \in \aut(G_q^*)$ there
are $\iota\in\inn(G_q)$, $\iota^*\in\inn(G_q^*)$, $\mu \in
\aut(G_q)$ and $\mu^* \in \aut(G_q^*)$ such that:
\begin{itemize}
\item[1.] $\xi=\mu\iota$ and $\xi^*=\mu^*\iota^*$.
\item[2.] $\mu(T)=T$ and $\mu^*(T^*)=T^*$.
\item[3.] both $\mu$ and $\mu^*$ pointwise fix $D$.
\item[4.] $m$ divides the order of $D$.
\item[5.] $\ord(\mu)$ and $\ord(\mu^*)$ divides $216(l+1)^3d!^4$.
\end{itemize}
\end{corol}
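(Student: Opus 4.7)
The plan is to extract $\mu$ and $\mu^*$ directly from the structural decompositions in Lemmas \ref{bounded order representitive.} and \ref{bounded order representitive2}, and then verify each of the five conditions. Apply Lemma \ref{bounded order representitive.} to $\xi \in \aut(G_q)$ to write $\xi = \varphi\eta\kappa\iota$ with $\iota \in \inn(G_q)$ and $\mu := \varphi\eta\kappa$ satisfying $\ord(\mu) \mid 6kz$, where $z$ divides $q-1$ and every prime factor of $z$ divides $6(l+1)$; apply Lemma \ref{bounded order representitive2} to $\xi^*$ to get $\xi^* = \varphi^*\kappa^*\iota^*$ with $\mu^* := \varphi^*\kappa^*$ satisfying $\ord(\mu^*) \mid kz^*$ under analogous constraints on $z^*$. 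Condition 1 is then immediate. Conditions 2 and 3 follow from the explicit descriptions of field, graph, and diagonal automorphisms in the preceding subsections: each such automorphism preserves $T$ (resp.\ $T^*$) by construction and pointwise fixes $D$ by Lemma \ref{stable group}.

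For Condition 4, $|D| = p - 1$ by Lemma \ref{stable group}. The hypothesis $p \equiv a \pmod{b}$ with $a = 1 + 6m(l+1)^2 d!^2$ and $b = (6m(l+1)^2 d!^2)^2$ gives the factorization
\[
p - 1 = 6m(l+1)^2 d!^2 \cdot \bigl(1 + 6m(l+1)^2 d!^2 j\bigr)
\]
for some integer $j \ge 0$; in particular $m \mid p - 1$, and the cofactor $1 + 6m(l+1)^2 d!^2 j$ is coprime to $6m(l+1)^2 d!^2$, and hence to $6(l+1)$---a fact that drives Condition 5.

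Condition 5 is the main calculation, carried out prime by prime. For any prime $\pi \nmid 6(l+1)$, $v_\pi(z) = 0$ and $v_\pi(6kz) = v_\pi(k) \le v_\pi(d!) \le v_\pi(216(l+1)^3 d!^4)$. Fix $\pi \mid 6(l+1)$; by the remark above, $v_\pi(p-1) = v_\pi(6m(l+1)^2 d!^2) = v_\pi(6) + v_\pi(m) + 2v_\pi(l+1) + 2v_\pi(d!)$. For $d \ge 2$ one has $8 \mid 6m(l+1)^2 d!^2 \mid p-1$, so $4 \mid p - 1$ and the Lifting-the-Exponent lemma yields $v_\pi(p^k - 1) = v_\pi(p-1) + v_\pi(k)$; the case $d=1$ is trivial since then $k=1$ and $q=p$. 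Combined with $v_\pi(k) \le v_\pi(d!)$ and with $v_\pi(m) \le 1 \le v_\pi(6(l+1))$ (which holds because $m$ is prime and $\pi \mid 6(l+1)$), this gives
\[
v_\pi(6kz) \le 2v_\pi(6) + v_\pi(m) + 2v_\pi(l+1) + 4v_\pi(d!) \le 3v_\pi(6(l+1)) + 4v_\pi(d!) = v_\pi\!\bigl(216(l+1)^3 d!^4\bigr).
\]
The identical calculation---with $kz^*$ replacing $6kz$---handles $\mu^*$.

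The main obstacle is the valuation bookkeeping in Condition 5: the LTE application requires $4 \mid p-1$, which the specific progression engineers for $d \ge 2$, and the critical inequality $v_\pi(m) \le v_\pi(6(l+1))$ is what forces the prime $m$ to be absorbed into the $m$-free bound $216(l+1)^3 d!^4$; both depend tightly on the precise shape of $a$ and $b$.
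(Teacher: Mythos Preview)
Your proof is correct and follows essentially the same approach as the paper: apply Lemmas~\ref{bounded order representitive.} and~\ref{bounded order representitive2} to get the decompositions (Conditions~1--3), read off $m \mid |D| = p-1$ from the progression (Condition~4), and bound $\ord(\mu)$ by controlling the $6(l+1)$-smooth part of $q-1$ (Condition~5). The only cosmetic difference is that the paper handles Condition~5 via the single congruence $p^k - 1 \equiv 6mk(l+1)^2 d!^2 \pmod{36m^2(l+1)^4 d!^4}$ (from the binomial expansion of $(1+M)^k$ modulo $M^2$) to conclude directly that $z \mid 6mk(l+1)^2 d!^2 \mid 36(l+1)^3 d!^3$, whereas you unpack the same computation prime-by-prime using LTE; the two arguments are equivalent.
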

\begin{proof} Fix $\xi$ and $\xi^*$ and assume that $q=p^k$ for $k \le d$.
Lemmas \ref{bounded order representitive.} and \ref{bounded order
representitive2} show that we can find $\iota\in\inn(G_q)$,
$\iota^*\in\inn(G_q^*)$, $\mu \in \aut(G_q)$ and $\mu^* \in
\aut(G_q^*)$ such that conditions 1,2,3 hold and the order of $\mu$
and $\mu^*$ divide $6zk$ where $z$ divides $q-1$ and every prime
factor of $z$ divides $6(l+1)$. The above arithmetic progression
implies that
$$q-1=p^k- 1\equiv 6mk(l+1)^2d!^2 \ \  (\Mod\ 36m^2(l+1)^4d!^4 ).$$ Thus, $z$
divides $6mk(l+1)^2d!^2$ so it also divides $36(l+1)^3d!^3$. It
follows that $\ord(\mu)$ and $\ord(\mu^*)$ divides $216(l+1)^3d!^4$.
Finally, the size of $D$ is $p-1$ which is divisible by $m$.
\end{proof}

\subsection{Powers in extension of finite simple Lie
groups}\label{subsection -products} The main goal of this section is
to prove Theorem \ref{power main4} below. We start this section with
a general discussion about powers in extension of finite groups. Let
$H$ be a finite group. Let $G$ be a normal subgroup of $H$ with
trivial center. Let $K$ be a coset of $G$ in $H$. Our goal is to
bound the size of $\{k^m \mid k \in K\}$ for some $m \in \N^+$. Fix
some $k \in K$ and let $\zeta \in \aut(G)$ be the automorphism
induced by conjugation by $k$. Then
$$|\{k^m \mid k \in K\}|=|\{ (g\zeta)^m \mid g \in G \}|$$ where in the right
side $G$ is viewed as a subgroup of $\aut(G)$ . Hence, we only have
deal with groups of the later form, i.e the case where $H=\aut(G)$.

Next, we focus on the case $G=S^r$ where $S$ is a non-abelian finite
simple group and $r \in \N^+$. Every automorphism $\zeta$ of the
direct product $S^r$ is of the form:
\begin{equation}\label{simple product}\zeta(s_1,\cdots,s_r) =
(\xi_1(s_{\sigma(1)}),\cdots,\xi_r(s_{\sigma(r)}))\end{equation}
where $\xi_1,\cdots,\xi_r \in \aut(S)$ and $\sigma\in \sym(r)$.
Every permutation is a product of disjoint cycles ,say, $\sigma$ is
a product of $k$ cycles of lengthes $r_1,\cdots,r_k$. By renumbering
the copies of $S^r$ and using the isomorphism $S^r \simeq
S^{r_1}\times\cdots \times S^{r_k}$ we can assume that there are
$\zeta_i \in \aut(S^{r_i})$ and $\xi_{i,j}\in \aut(S)$ for $1 \le i
\le k$ and $1 \le j \le r_i$ such that
$$\zeta(\bar{s_1},\cdots,\bar{s_k}) =
(\zeta_1(\bar{s}_1),\cdots,\zeta_k(\bar{s}_k))$$ and
$$\zeta_i(s_{i,1},\cdots,s_{i,r_i})=(\xi_{i,1}(s_{i,\sigma_i(1)}),\cdots,\xi_{i,r_i}(s_{i,\sigma_i(r_i)}))$$
where $\bar{s}_i=(s_{i,1},\cdots,s_{i,r_i}) \in S^{r_i}$ and
$\sigma_i=(r_i \ r_{i-1}\cdots 2\ 1 )\in \sym(r_i)$. Note that if
$$|\{(\bar{s}_1 \zeta_1)^m \mid \bar{s}_1 \in  S^{r_1} \}| \le
c|S|^{r_1}$$ for some constant  $c>0$ then also $$|\{(\bar{s}
\zeta)^m \mid \bar{s} \in S^{r} \}| \le c|S|^{r}.$$ This allows us
to restrict to the case where $\sigma$ is a cyclic permutation.

Finally, let $\zeta \in \aut(S^r)$ as in equation \ref{simple
product} where  $\sigma$ is the permutation  $(r \cdots 1)$. Denote
$\chi:=(\rho_1,\rho_2,\cdots,\rho_r) \in \aut(S^r)$ where
$\rho_j:=\xi_j\cdots\xi_2$ for $j \ge 2$ and $\rho_1:=\id$.
Conjugation by $\chi$ allows us to replace $\zeta$ with
$\chi^{-1}\zeta\chi$, i.e. to assume that we have
$\xi_2=\cdots=\xi_r=\id$.

The main theorem of this section is:
\begin{thm}\label{power main4} Let $d,l.r\in \N^+$ be constants. There
is a constant $c \in \N^+$ such that for every number $m \ge 2$ the
following claim holds:

Let $p$ be a prime which belongs to the arithmetic progression
$(a+bj)_{j \ge c}$ where
$$a:=1+6m(l+1)^2d!^2 \text{ and } b:=36m^2(l+1)^4d!^4.$$ Let $\Gamma$ be a finite group with a non-trivial
normal subgroup $\Lambda$. Furthermore, assume that $\Lambda$  is
isomorphic to a product of at most $r$ finite quasi simple groups of
Lie type (not necessarily of adjoint type) of rank at most $l$ over
extensions of $\F_{p}$ of degree at most $d$. Then for every coset
$\Psi$ of $\Lambda$ the following inequity holds:
$$|\{g^m \mid g \in \Psi\}| \le \left(1-\frac{1}{4n^r{C_l}^r}\right)|\Psi|$$
where $n:=216(l+1)^3d!^4$ and $C_l$ is the maximal size of a Weil
group of rank $l$.
\end{thm}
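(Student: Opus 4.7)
The plan combines the reductions at the start of Subsection~\ref{subsection -products}, Corollary~\ref{final reduction} on ``nice'' representatives for automorphism cosets, and the counting bound of Proposition~\ref{power proportion prop}. First I would pass from the quasi-simple setting to the adjoint setting by quotienting out the (bounded) centers. Next, writing $\Psi = \Lambda \zeta$ for some $\zeta \in \aut(\Lambda)$ and grouping the simple factors of $\Lambda$ into blocks according to the cycles of $\zeta$, I would reduce to the case $\Lambda = S^s$ with $s \le r$, where $\zeta$ acts as a cyclic shift of the coordinates composed with a single $\xi \in \aut(S)$ on one coordinate (exactly as set up in the paragraph preceding the theorem). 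Because the blocks commute, the set $\{x^m : x \in \Psi\}$ factors as a direct product over blocks, so it suffices to prove a block-level bound of the form $1 - \frac{1}{4 n^s C_l^s}$ and then multiply; the inequality $\prod_j (1-x_j) \le 1 - \max_j x_j$ together with $s_j \le r$ delivers the stated global bound.

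For a single block $\Lambda = S^s$, I would apply Corollary~\ref{final reduction} to $\xi$: the arithmetic progression in the hypothesis is tailored precisely so this corollary applies. It yields $\xi = \mu_0 \iota$ with $\iota$ inner, $\mu_0$ stabilizing the standard torus $T^S$, $\mu_0$ pointwise fixing the subgroup $D \subseteq T^S$, $m \mid |D|$, and $\ord(\mu_0) \mid n = 216(l+1)^3 d!^4$. Absorbing $\iota$ into the coset representative, I may assume $\xi = \mu_0$. Now set $T := (T^S)^s \le \Lambda$, a maximal abelian subgroup preserved by the conjugation automorphism $\mu$ induced by $\zeta$; a direct computation identifies $T_\mu$ with the diagonal image $\Delta(T^S_{\mu_0})$, and the diagonally embedded $D$ sits inside it, so $m$ divides $|T_\mu|$, verifying hypothesis~(1) of Proposition~\ref{power proportion prop}. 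For hypothesis~(2), Corollary~\ref{enough fixed points2} (or~\ref{enough fixed points3} in the Steinberg case) applied with $\alpha = 3/4$ and $n_{\mathrm{prop}} := \ord(\mu) = s \cdot \ord(\mu_0)$ gives $|T^S_{\mu_0, n_{\mathrm{prop}}}| \ge \frac{3}{4}|T^S_{\mu_0}|$ once $p$ exceeds a threshold depending only on $d,l,r$; since centralizers in $S^s$ decompose coordinatewise, this transfers to $|T_{\mu, n_{\mathrm{prop}}}| \ge \frac{3}{4}|T_\mu|$.

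With the hypotheses in place, Proposition~\ref{power proportion prop} yields the block bound $1 - \frac{1}{4 c_{\mathrm{prop}} [N_\Lambda(T):T]}$, where $N_\Lambda(T) = N_S(T^S)^s$ gives $[N_\Lambda(T):T] = |W(S)|^s \le C_l^s$, and $c_{\mathrm{prop}}$, the number of elements of $T_\mu \cong T^S_{\mu_0}$ of order dividing $n_{\mathrm{prop}}$, is controlled using the rank bound $l$ on $T^S_{\mu_0}$ and $\ord(\mu_0) \le n$; this is absorbed into the factor $n^s$ appearing in the block bound. Taking the product over blocks and using $s_j \le r$ then yields $1 - \frac{1}{4 n^r C_l^r}$. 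The main technical obstacle is the uniform choice of the starting index $c$ of the arithmetic progression: since $n_{\mathrm{prop}}$ grows with both $s$ and $m$, the ``$p$ large enough'' threshold in Corollaries~\ref{enough fixed points2}--\ref{enough fixed points3} must be bounded uniformly in $d,l,r$ (and independent of $m$), which is precisely why the hypothesis on $p$ is phrased as ``primes in $(a+bj)_{j \ge c}$'' with $c$ depending only on $d,l,r$.
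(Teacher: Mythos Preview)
Your proposal follows essentially the same route as the paper's proof: pass to the adjoint quotient, use the discussion at the start of Subsection~\ref{subsection -products} to reduce to a single cyclic block $\Lambda=S^s$ with $\zeta$ a cyclic shift twisted by one $\xi\in\aut(S)$, apply Corollary~\ref{final reduction} to replace $\xi$ by a $\mu_0$ of bounded order that preserves the torus and fixes $D$ pointwise, and then feed the resulting data into Proposition~\ref{power proportion prop}. Your identification of $T_\mu$ with the diagonal copy $\Delta(T^S_{\mu_0})$ is in fact the correct computation.

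Two small points. First, Proposition~\ref{power proportion prop} is stated only for \emph{prime} $m$, and you never reduce to that case; the paper does so explicitly (if $m'$ is any prime divisor of $m$ then $\{g^m : g\in\Psi\}\subseteq\{h^{m'} : h\in\Psi\}$, and since $m'\mid m\mid p-1$ the divisibility condition $m'\mid|D|$ still holds for primes $p$ in the given progression). Second, your closing remark that $n_{\mathrm{prop}}$ ``grows with $m$'' is off: by Corollary~\ref{final reduction}, $\ord(\mu_0)$ divides $n=216(l+1)^3d!^4$, which is independent of $m$, so $n_{\mathrm{prop}}\le rn$ is bounded purely in terms of $d,l,r$. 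That is precisely why the threshold $c$ in Corollaries~\ref{enough fixed points2} and~\ref{enough fixed points3} (and hence the starting index of the arithmetic progression) can be chosen uniformly in $m$, so the obstacle you flag is not actually present.
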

\begin{proof} We start with some reductions. The group
$\Lambda$ has a subgroup $N$ normal in $\Gamma$ such that
$\Lambda/N$ is isomorphic to a product of isomorphic finite simple
Lie groups of adjoint type. We replace $\Lambda$ with this quotient.
We focus on the case where the finite simple Lie group is an
Steinberg group $G_q^*$ of type $\Phi$ over $\F_q$ with $q:=p^d$ and
$\Lambda=(G_q^*)^r$. The proofs of the other cases are similar.
Furthermore, we can assume that $m$ is a prime number.

Define $n=216r(l+1)^3d!^4$. Corollary \ref{enough fixed points3}
allows us to choose a constant $c$ such that if $p$ belongs to the
above series then $|T_{\mu^*,n}|\ge
\frac{3}{4}^{\frac{1}{r}}|T_{\mu^*}|$ for every $\mu^*\in
\aut(G_q^*)$ which stabilizes $T$ and pointwise fixes $D$. We can
further assume that $c$ is large enough so that $T^*$ is a
maximal-abelian subgroup of $G_q^*$ and $N_{G_{q^*}}(T^*)=N^*$.

The discussion before the proof shows that it is enough to deal with
the case $\Gamma=\aut(\Lambda)$ and $\Psi=\Lambda \zeta^*$ where
$$\zeta^*(s_1,\cdots,s_r) =
(\xi^*(s_r),s_1,\cdots,s_{r-1})$$ for some $\xi^* \in \aut(G_q^*)$.
Corollary \ref{final reduction} allows us to replace $\zeta^*$ with
some representative of $\Lambda\zeta^*$ such that the new
representative, still denoted by $\zeta^*$, stratifies
$$\zeta^*(s_1,\cdots,s_r) =
(\mu^*(s_r),s_1,\cdots,s_{r-1})$$ where $\mu^* \in \aut(G_q^*)$
satisfies conditions $2$, $3$, $4$ and $5$ of that corollary. This
implies that the order of $\zeta^*$ divides $n$.

We are in position to verify that the requirements of Proposition
\ref{power proportion prop}. The group $\tilde{T}:=(T^*)^r$ is a
maximal-abelian subgroup of $\Lambda$ and its normalizer is
$\tilde{N}:=N_{\Lambda}(\tilde{T})=(N^*)^r$. In particular,
$[\tilde{N}:\tilde{T}]\le |W(\Phi)|^r$. Note that
$\tilde{T}_{\zeta^*,\ord(\zeta^*)}\supseteq \tilde{T}_{\zeta^*,n}$
since $\ord(\zeta^*)$ divides $n$. Thus,
\begin{itemize}
\item $|\tilde{T}_{\zeta^*,\ord(\zeta^*)}| \ge
\frac{3}{4}|\tilde{T}_{\zeta^*}|$ since
$\tilde{T}_{\zeta^*}=(T_{\mu^*})^r$ and $\tilde{T}_{\zeta^*,n}
=(T^*_{\mu^*,n})^r$.
\item $m$ divides the order of $\tilde{T}_{\zeta^*}$ since it contains the subgroup
$D^r$  and $m$ divides $|D|$.
\end{itemize}
Proposition \ref{power proportion prop} together with the last two
points shows that
$$|\{g^m \mid g \in \Psi\}| \le \left(1-\frac{1}{4c_n|W(\Phi)|^r}\right)|\Psi|$$
where $c_n$ is the number of elements of $\tilde{T}$ such that their
orders divide $n$. However, $\tilde{T}=(T^*)^r \subseteq T^r
\subseteq \hat{T}^r \simeq (\F_q^*)^r$ where $\F_q^*$ is the
multiplicative group of $\F_q$ so $c_n \le n^r$.
\end{proof}

The next Corollary is stated with the notations of Proposition
\ref{structre}.
\begin{corol}\label{done} For $n \ge 2$ every there is a constant $c$
for which the following claim holds:

Let $\Gamma$ be a subgroup of $\GL_n(\Q)$ such that its
Zariski-closure is semisimple. Let $k \in \N^+$ be large enough and
let $2 \le m \le k^2$. Then there are two coprime natural numbers $2
\le a,b \le k^5$ such that for every prime $p$ which belongs to the
arithmetic progression $(a+bj)_{j \ge 1}$ and every two coset $C,D
\in \Gamma_p/\Lambda_p$, the size of $\{g^m\mid g \in D \}\cap C$ is
at most $(1-c)|\Lambda_p|$.

\end{corol}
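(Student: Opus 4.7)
The plan is to deduce Corollary \ref{done} as a more or less direct consequence of Theorem \ref{power main4}, once the hypotheses are matched up via Proposition \ref{structre}. That proposition provides, depending only on $\Gamma$ (and ultimately only on $n$), a finite index normal subgroup $\Lambda \trianglelefteq \Gamma$, a cofinite set of primes $\mathcal{P}$, homomorphisms $\pi_p:\Gamma\to\Gamma_p$ with $N_p:=\ker\pi_p\subseteq\Lambda$, and a constant $l$ such that for every $p\in\mathcal{P}$ the group $\Lambda_p:=\pi_p(\Lambda)$ is a direct product of at most $l$ finite simple groups of Lie type of rank at most $l$ over extensions of $\F_p$ of degree at most $l$. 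Thus, for every $p\in\mathcal{P}$, $\Lambda_p$ fits the hypotheses of Theorem \ref{power main4} with parameters $d=r=l$.

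Given $k$ large and $2\le m\le k^2$, I would take the arithmetic progression dictated by Theorem \ref{power main4}:
$$a := 1+6m(l+1)^2 l!^2, \qquad b := 36m^2(l+1)^4 l!^4.$$
Writing $t := 6m(l+1)^2 l!^2$, one has $a=1+t$ and $b=t^2$; since $\gcd(1+t,t)=1$, it follows that $\gcd(a,b)=1$. Both $a$ and $b$ are at least $2$, and because $m\le k^2$, for $k$ sufficiently large (depending only on $l$, hence only on $n$) they also satisfy $a,b\le k^5$. Enlarging $k$ further, I can discard the finitely many primes not in $\mathcal{P}$ and accommodate the threshold ``$j\ge c$'' in Theorem \ref{power main4}.

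The final step is to pass from ``$m$-powers in a coset'' to ``$m$-powers intersected with a coset.'' Fix $D=h\Lambda_p$ and any $\lambda\in\Lambda_p$. Normality of $\Lambda_p$ in $\Gamma_p$ yields
$$(h\lambda)^m = h^m \cdot (h^{-(m-1)}\lambda h^{m-1})\cdots(h^{-1}\lambda h)\lambda \in h^m\Lambda_p,$$
so $\{g^m\mid g\in D\}$ lies inside the single coset $h^m\Lambda_p$. Hence for any coset $C\in\Gamma_p/\Lambda_p$, the intersection $\{g^m\mid g\in D\}\cap C$ is either empty (when $C\ne h^m\Lambda_p$) or equal to $\{g^m\mid g\in D\}$ (when $C = h^m\Lambda_p$). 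In the latter case Theorem \ref{power main4}, applied to the coset $D$, gives
$$|\{g^m\mid g\in D\}| \le \Bigl(1-\frac{1}{4n_0^l C_l^l}\Bigr)|\Lambda_p|,$$
with $n_0:=216(l+1)^3 l!^4$. Since $l$ and $C_l$ depend only on the dimension $n$, setting $c:=1/(4n_0^l C_l^l)$ yields the required uniform constant.

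There is no genuine obstacle in this reduction; the only delicate point is bookkeeping the uniformity of $c$ in $m$ and $p$, which is handled by the fact that the constant extracted from Theorem \ref{power main4} depends on $l,d,r$ (all bounded in terms of $n$ via Proposition \ref{structre}) but not on the particular prime in the progression, together with the verification that $a$ and $b$ fit inside $[2,k^5]$ and are coprime uniformly for all $2\le m\le k^2$.
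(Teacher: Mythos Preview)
Your proposal is correct and follows exactly the route the paper intends: the paper states Corollary~\ref{done} immediately after Theorem~\ref{power main4} without giving a proof, and the deduction you outline---pulling the structural constants $d,l,r$ from Proposition~\ref{structre}, plugging them into Theorem~\ref{power main4} to obtain the arithmetic progression and the density constant, and observing that $\{g^m\mid g\in D\}$ sits in a single $\Lambda_p$-coset---is precisely the intended argument. Your bookkeeping (coprimality of $a,b$, the bound $a,b\le k^5$ via $m\le k^2$, absorbing the threshold $j\ge c$ and the finitely many excluded primes by shifting the progression) is sound; the only point you assert without justification is that the constant $l$ from Proposition~\ref{structre} can be bounded in terms of $n$ alone, but this is true since a semisimple subgroup of $\GL_n$ has dimension, rank, and number of simple factors all bounded by functions of $n$.
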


\end{document}